\documentclass[11pt]{article}

\usepackage{amsfonts,amsmath,amssymb,amsthm}

\setlength{\textheight}{9in}
\setlength{\textwidth}{6.5in}
\setlength{\oddsidemargin}{-2pt}
\setlength{\evensidemargin}{-2pt}
\setlength{\topmargin}{-.5in}
\setlength{\headsep}{24pt}
\setlength{\parskip}{0.15in}

\raggedbottom

\newtheorem{thm}{Theorem}[section]
\newtheorem{prop}[thm]{Proposition}
\newtheorem{lem}[thm]{Lemma}
\newtheorem{cor}[thm]{Corollary}

\newtheorem{asm}{Assumption}

\theoremstyle{remark}

\theoremstyle{definition}

\newcommand{\ra}{\rightarrow}

\newcommand{\Ra}{\Rightarrow}

\newcommand{\N}{\mathbb N}     
\newcommand{\Q}{\mathbb Q}     
\newcommand{\R}{\mathbb R}     
\newcommand{\Z}{\mathbb Z}     

\renewcommand{\a}{\alpha}
\renewcommand{\b}{\beta}

\renewcommand{\d}{\delta}

\newcommand{\e}{\varepsilon}

\newcommand{\w}{\omega}

\renewcommand{\l}{\lambda}

\newcommand{\s}{\sigma}

\renewcommand{\P}{\mathbb{P}}   
\newcommand{\E}{\mathbb{E}}   

\newcommand{\bigo}{\mathcal{O}}
\newcommand{\bw}{\bar{\w}}  

\author{Jonathon Peterson\thanks{
School of Mathematics, University of Minnesota,
206 Church St. SE, Minneapolis, MN 55455.} \thanks{
The research of the author was
partially supported
by NSF grant  DMS-0503775 and by a Doctoral Dissertation Fellowship from the University of Minnesota.}
}
\title{Quenched Limits 
for Transient, Ballistic, Sub-Gaussian 
One-Dimensional Random Walk in Random Environment}

\date{May 12, 2008}

\begin{document}
\maketitle
\begin{abstract}
We consider a nearest-neighbor,
one-dimensional random walk $\{X_n\}_{n\geq 0}$ in a random
i.i.d. environment,
in the regime where the walk is transient with speed $v_P > 0$ and there exists an $s\in(1,2)$ such that the annealed law  of $n^{-1/s} ( X_n - n v_P )$ converges to a stable law of parameter $s$.
Under the quenched law (i.e., conditioned on the environment), we show that
no limit laws are possible. In particular we show that there exist sequences $\{t_k\}$ and
$\{t_k'\}$ depending on
the environment only, such that a quenched central limit theorem holds along the subsequence $t_k$, but the quenched limiting distribution along the subsequence $t_k'$ is a centered reverse exponential distribution. This complements the results of a recent paper of Peterson and Zeitouni (\emph{arXiv:math/0704.1778v1 [math.PR]}
) which handled the case when the parameter $s\in(0,1)$. 
\end{abstract}

\begin{abstract}
On examine des marches al\'eatoires
unidimensionnelles 
en milieu al\'eatoire avec 
un environnement i.i.d.,
dans le r\'egime o\`u la marche est transiente avec vitesse $v_P > 0$ 
et o\`u il existe $s\in(1,2)$ tel que la 
loi ``annealed'' (i.e., moyenn\'{e}e) de $n^{-1/s} ( X_n - n v_P )$ converge vers une loi stable de param\`etre $s$.
Sous la loi ``quenched'' (i.e. conditionnelement \`a l'environnement) on montre qu'il n'existe pas de loi limite. En particulier on prouve qu'il existe des suites $\{t_k\}$ et $\{t_k'\}$, d\'ependant de l'environnement, tel qu'un th\'eor\`eme de limite centrale quenched est valide le long de la suite $t_k$, mais o\`u
la distribution limite suivant la suite $t_k'$ est une 
distribution centr\'ee exponentielle inverse.
Ceci compl\`ete les r\'esultats 
d'un article r\'ecent de 
Peterson et Zeitouni (\emph{arXiv:math/0704.1778v1 [math.PR]}
) qui traitait 
le case de param\`{e}tre
$s\in(0,1)$. 
\end{abstract}

\vspace{0.5cm}
\noindent{\sc Key Words:}
 Random walk,  random environment.\\
{AMS (1991) subject classifications:} Primary 60K37, Secondary 60F05, 82C41,
82D30\,.

\begin{section}{Introduction, Notation, and Statement of Main Results}
Let $\Omega = [0,1]^\Z$, and let $\mathcal{F}$ be the Borel $\s-$algebra on $\Omega$. A random environment is an $\Omega$-valued random variable $\w = \{\w_i\}_{i\in\Z}$ with distribution $P$. In this paper we will assume that $P$ is a product measure on $\Omega$.
The \emph{quenched} law $P_\w^x$ for a random walk $X_n$ in the environment $\w$ is defined by
\[
P_\w^x( X_0 = x ) = 1, \quad \text{and} \quad
P_\w^x\left( X_{n+1} = j | X_n = i \right) =
\begin{cases}
\w_i &\quad \text{if } j=i+1, \\
1-\w_i &\quad \text{if } j=i-1.
\end{cases}
\]
$\Z^\N$ is the space for the paths of the random walk $\{X_n\}_{n\in\N}$,
and let $\mathcal{G}$ denote the $\s-$algebra generated by the cylinder sets.
Note that for each $\w \in \Omega$, $P_\w$ is a probability measure
on $( \Z^\N, \mathcal{G} )$, and for each $G\in \mathcal{G}$,
$P_\w^x(G):(\Omega, \mathcal{F}) \ra [0,1]$ is a measurable
function of $\w$.  Expectations under the law $P_\w^x$ are denoted $E_\w^x$.
The \emph{annealed} law for the random walk in random
environment $X_n$ is defined by
\[
\P^x(F\times G) = \int_F P_\w^x(G)P(d\w),
\quad F\in \mathcal{F},  G\in \mathcal{G}\!.
\]
For ease of notation we will use $P_\w$ and $\P$ in place
of $P_\w^0$ and $\P^0$ respectively. We will also use $\P^x$ to
refer to the marginal on the space of paths, i.e. $\P^x(G)=
\P^x(\Omega\times G) = E_P\left[ P^x_\w(G) \right]$ for
$G\in \mathcal{G}$. Expectations under the law $\P$ will be written $\E$.

A simple criterion for recurrence of a one-dimensional RWRE and a formula for the speed of
transience was given by Solomon in \cite{sRWRE}. For any integers
$i\leq j$ define
\begin{equation}
\rho_i := \frac{1-\w_i}{\w_i}, \quad \text{and}\quad
\Pi_{i,j} := \prod_{k=i}^j \rho_k\,. \label{rhodef}
\end{equation}
Then, $X_n$ is transient to the right (resp. to the left)
if $E_P(\log \rho_0) < 0$, (resp. $E_P \log \rho_0 > 0$) and recurrent
if $E_P (\log \rho_0) = 0$ (henceforth we will write $\rho$ instead of
$\rho_0$ in expectations involving only $\rho_0$). In the case where
$E_P \log\rho < 0$ (transience to the right),
Solomon established the following law of large numbers
\begin{equation}
v_P:= \lim_{n\ra\infty} \frac{X_n}{n} =
\lim_{n\ra\infty} \frac{n}{T_n} = \frac{1}{\E T_1}, \quad \P-a.s. \label{XTLLN}
\end{equation}
where $T_n:= \min\{k \geq 0:X_k=n\}$. 
For any integers $i<j$ define
\begin{equation}
W_{i,j} := \sum_{k=i}^j \Pi_{k,j}, \quad \text{and}
\quad W_j := \sum_{k\leq j} \Pi_{k,j}\,. \label{Wdef}
\end{equation}
When $E_P \log \rho< 0$, it was shown in 
\cite{zRWRE}
that
\begin{equation}
E_\w^j T_{j+1} = 1+2W_j < \infty, \quad P-a.s., \label{QET}
\end{equation}
and thus $v_P =
1/(1+2E_P W_0)$. Since $P$ is a product measure, $E_P W_0 =
\sum_{k=1}^\infty \left(E_P \rho\right)^k$. In particular, $v_P > 0$ if
$E_P \rho < 1$.

Kesten, Kozlov, and Spitzer \cite{kksStable} determined the
annealed limiting distribution of a RWRE with $E_P \log \rho < 0$, i.e.
transient to the right. They derived the limiting distributions for the walk by first establishing a stable
limit law of index $s$ for $T_n$, where $s$ is defined by the equation
$
E_P\rho^s = 1$. 
In particular, they showed that when $s\in(1,2)$ there exists a  $b>0$ such that
\begin{equation}
\lim_{n\ra\infty} \P\left( \frac{T_n-\E T_n}{n^{1/s}} \leq x \right) =
L_{s,b}(x)\, \label{annealedstableT}
\end{equation}
and
\begin{equation}
\lim_{n\ra\infty} \P\left( \frac{X_n - n v_P}{v_P^{1+1/s} n^{1/s}} \leq x
\right) = 1-L_{s,b}(-x), \label{annealedstableX}
\end{equation}
where $L_{s,b}$ is the distribution function for a stable random
variable with characteristic function
\[
\hat{L}_{s,b}(t)= \exp\left\{ -b|t|^s \left(
1-i\frac{t}{|t|}\tan(\pi s/2)  \right) \right\}. 
\]
While the annealed limiting distributions for transient
one-dimensional RWRE have been known for quite a while, the corresponding
quenched limiting distributions have remained largely unstudied
until recently. In the case when $s>2$, Goldsheid \cite{gQCLT} and Peterson \cite{pThesis}
independently proved that a quenched CLT holds with a
random (depending on the environment)
centering. Previously, in \cite{kmCLT} and \cite{zRWRE}
it had only been shown that the limiting statements for
the quenched CLT with random centering held in probability (rather than almost surely). In the case when $s<1$ it was shown in \cite{pzSL1} that no quenched limiting distribution exists for the RWRE. In particular, it was shown that $P-a.s.$ there exist two different random sequences $t_k$ and $t_k'$ such that the behavior of the RWRE is either localized (concentrated in a interval of size $\log^2 t_k'$) or spread out (scaling of order $t_k^{s}$). 

In this paper, we analyze the quenched limiting distributions of a
one-dimensional transient RWRE in the case $s\in(1,2)$. We show that, as in the case when $s<1$, there is no quenched limiting distribution of the random walk. However, as shown in Section \ref{gp}, the existence of a positive speed for the random walk allows us to transfer limiting distributions from $T_n$ to $X_n$. 
Throughout the paper, we will
make the following assumptions:
\begin{asm} \label{essentialasm}
$P$ is a product measure on $\Omega$ such that
\begin{equation}
E_P \log\rho < 0 \quad\text{and}\quad E_P \rho^s = 1 \text{ for
some } s>0 . \label{zerospeedregime}
\end{equation}
\end{asm}
\begin{asm}
The distribution of $\log \rho$ is non-lattice under
$P$ and $E_P ( \rho^s \log\rho ) < \infty$.  \label{techasm}
\end{asm}
\noindent\textbf{Remarks:}\\
\textbf{1.} Assumption \ref{essentialasm}
contains the essential assumptions for our results. The technical conditions contained in Assumption \ref{techasm} were also invoked in \cite{kksStable} and \cite{pzSL1}. \\
\textbf{2.} Since $E_P \rho^\gamma$ is a convex function of
$\gamma$, the two statements in \eqref{zerospeedregime} give that
$E_P \rho^\gamma < 1$ for all
$0<\gamma<s$ and $E_P \rho^\gamma > 1$ for all $\gamma > s$. In particular this implies that $v_P > 0 \iff s > 1$. The main results of this paper are for $s\in(1,2)$, but many statements hold for a wider range of $s$. If no mention is made of bounds on $s$ then it is assumed that the statement holds for all $s>0$.\\
\textbf{3.} The cases $s\in\{1,2\}$ are not covered by \cite{pzSL1} or by this paper. It is not clear whether or not a quenched CLT holds in the case $s=2$, but we suspect that the results for $s=1$ will be similar to those of the cases $s\in(0,1)$ and $s\in(1,2)$  - i.e. no quenched limiting distribution for the random walk. However, since $s=1$ is the bordering case between the zero-speed and positive-speed regimes the analysis is likely to be more technical (as was also the case in \cite{kksStable}). 

Let $\Phi(x)$ and $\Psi(x)$ be the distribution functions for a gaussian and exponential random variable respectively. That is, 
\[
\Phi(x):= \int_{-\infty}^x \frac{1}{\sqrt{2\pi}} e^{-t^2/2} dt \quad\text{and}\quad \Psi(x):= \begin{cases} 0 & x < 0 \\ 1-e^{-x} & x\geq 0 \end{cases} \,.
\]
Our main results are the following:
\begin{thm}\label{qCLT}
Let Assumptions \ref{essentialasm} and \ref{techasm} hold, and let $s\in(1,2)$. Then $P-a.s.$ there exists a random subsequence $n_{k_m}=n_{k_m}(\w)$ of $n_k=2^{2^k}$ and non-deterministic random variables $v_{k_m,\w}$ such that 
\[
\lim_{m\ra\infty} P_\w\left( \frac{ T_{n_{k_m}} - E_\w T_{n_{k_m}} }{ \sqrt{v_{k_m,\w}} } \leq x \right) = \Phi(x), \qquad \forall x\in\R,
\]
and 
\[
\lim_{m\ra\infty} P_\w\left( \frac{ X_{t_m} - n_{k_m} }{v_P \sqrt{v_{k_m,\w}} } \leq x \right) = \Phi(x), \qquad \forall x\in\R,
\]
where $t_m=t_m(\w):= \left\lfloor E_\w T_{n_{k_m}} \right\rfloor$.
\end{thm}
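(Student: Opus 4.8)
The plan is to exploit the representation $T_n = \sum_{i=0}^{n-1}(T_{i+1}-T_i)$ and the known formula $E_\w^i T_{i+1} = 1 + 2W_i$. Under the quenched law $P_\w$, the increments $T_{i+1}-T_i$ are independent (given $\w$), so $\mathrm{Var}_\w(T_n) = \sum_{i=0}^{n-1}\mathrm{Var}_\w(T_{i+1}-T_i) =: v_{n,\w}$, and a quenched CLT for $(T_n - E_\w T_n)/\sqrt{v_{n,\w}}$ will follow from a Lindeberg-type condition once we know that no single increment (equivalently, no single ``large'' environmental region $W_i$) dominates the sum $v_{n,\w}$. The heavy-tailed nature of $\rho$ (with $E_P\rho^s=1$, $s\in(1,2)$) means that typically there \emph{are} exceptionally large blocks $W_i$ of order $n^{1/s}$ that spoil the CLT — this is exactly why no full quenched limit law holds. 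The key observation is that for a \emph{random} subsequence $n_{k_m}$ of $n_k = 2^{2^k}$, one can arrange that in the block $[0, n_{k_m})$ the largest $W_i$ is only of order $o(\sqrt{v_{n_{k_m},\w}})$, so the Lindeberg condition holds along that subsequence.

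The steps, in order: (i) Establish the tail behavior of $W_0$ under $P$: using the Kesten-type renewal argument (as in \cite{kksStable}), show $P(W_0 > x) \sim C x^{-s}$, and correspondingly control the conditional moments $E_\w[(T_{i+1}-T_i)^2 \mathbf{1}_{W_i \le K}]$ and $\mathrm{Var}_\w(T_{i+1}-T_i)$ in terms of $W_i$ and $W_i^2$. (ii) Show that $v_{n,\w} = \sum_{i<n}\mathrm{Var}_\w(T_{i+1}-T_i)$ grows like $n^{2/s}$ up to slowly varying fluctuations — more precisely, that $v_{n,\w}/n^{2/s}$ does not converge but oscillates, with $\liminf$ and $\limsup$ along $n_k = 2^{2^k}$ capturing the fluctuation. (iii) Define the random subsequence $n_{k_m}$ by selecting those $k$ for which the partial maximum $M_{n_k} := \max_{i<n_k} W_i$ satisfies $M_{n_k} \le \e_m \sqrt{v_{n_k,\w}}$ for a sequence $\e_m \downarrow 0$; a Borel–Cantelli / second-moment argument using the tail from (i) shows that $P$-a.s. infinitely many such $k$ exist (the point being that the rare large blocks occur on a sparse set of scales, so the super-exponentially spaced scales $2^{2^k}$ will infinitely often avoid them). (iv) On this subsequence verify the Lindeberg condition $\frac{1}{v_{n_{k_m},\w}}\sum_{i<n_{k_m}} E_\w[(T_{i+1}-T_i - E_\w(T_{i+1}-T_i))^2 \mathbf{1}_{|T_{i+1}-T_i - E_\w(\cdots)| > \delta\sqrt{v_{n_{k_m},\w}}}] \to 0$, using that each increment's contribution is controlled by $W_i \le M_{n_{k_m}} = o(\sqrt{v_{n_{k_m},\w}})$; this yields the first display. (v) Transfer to $X_n$ via the hitting-time inversion: since $X_{t_m} \approx n_{k_m}$ when $t_m = \lfloor E_\w T_{n_{k_m}}\rfloor$ and $v_P > 0$ provides the deterministic linear relation between time and space, the event $\{X_{t_m} \le n_{k_m} + y\}$ is essentially $\{T_{n_{k_m} + y} \ge t_m\} = \{T_{n_{k_m}+y} - E_\w T_{n_{k_m}+y} \ge t_m - E_\w T_{n_{k_m}+y}\}$, and a comparison of $E_\w T_{n_{k_m}+y}$ with $E_\w T_{n_{k_m}}$ (linear in $y$ with slope $\approx 1/v_P$) converts the CLT for $T$ into the claimed CLT for $X$; the machinery for this transfer is promised in Section \ref{gp}.

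The main obstacle is step (iii): showing that $P$-a.s. infinitely many of the dyadic-tower scales $n_k = 2^{2^k}$ are ``good,'' i.e. free of anomalously large environmental blocks relative to the typical variance scale. This requires a delicate quantitative handle on how the maximum $M_{n,\w}$ and the sum $v_{n,\w}$ fluctuate \emph{jointly}, and an independence-across-scales argument strong enough to push a Borel–Cantelli lower bound through despite the dependence introduced by nested blocks $[0,n_k) \subset [0, n_{k+1})$. A secondary difficulty is making precise the claim in step (ii) that $v_{n,\w}$ genuinely oscillates (needed so that the $v_{k_m,\w}$ are non-deterministic, as asserted in the theorem), which amounts to a separate $\limsup/\liminf$ analysis of a sum of heavy-tailed i.i.d.-like terms and is where Assumption \ref{techasm} (non-lattice, finite $E_P(\rho^s\log\rho)$) will be used to pin down the relevant slowly varying corrections.
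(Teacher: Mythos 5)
Your plan has the correct high-level architecture (Lindeberg CLT along a sparse random subsequence where no single block dominates, then transfer to $X$ via hitting-time inversion and the positivity of the speed), and your step (v) matches the machinery of Proposition \ref{generalprop}. But there is a genuine gap centered on your step (iii), and it is exactly the step you flag as the main obstacle.

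First, you build from single-site increments $T_{i+1}-T_i$ and the site potentials $W_i$. While these increments are independent under the \emph{quenched} law $P_\w$, the quantities that matter for your Borel--Cantelli argument are the \emph{environmental} random variables $\mathrm{Var}_\w(T_{i+1}-T_i)$, and as functions of $\w$ these are very strongly dependent: $W_{i+1}=\rho_{i+1}(1+W_i)$, so consecutive site variances are essentially determined by each other and, worse, each depends on the entire environment to the left of $i$. There is no independence across sites, let alone across dyadic-tower scales, to drive a second Borel--Cantelli lemma. The paper resolves this with two ideas you do not supply: (a) it replaces sites by ladder locations $\nu_i$, so that the blocks between consecutive ladder locations are i.i.d.\ under $Q$; and (b) it introduces a reflected walk $\bar{T}^{(n)}$ that cannot backtrack more than $b_n$ blocks, so that the block variances $\s_{i,n,\w}^2$ for $i$ in disjoint windows truly \emph{are} independent environmental random variables. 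Correspondingly, $v_{k,\w}$ is not $\mathrm{Var}_\w T_{n_k}$ as you posit but the reflected variance over the window $(n_{k-1},n_k]$ as in \eqref{dkvkdef}; with that definition the events at even $k$ form an independent sequence and the second Borel--Cantelli lemma applies cleanly. Without reflections and without the ladder structure, your ``independence-across-scales argument'' is not an available tool, and it is not clear how to patch it.

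Second, even granting the independence, the event you propose, $M_{n_k}\le\e_m\sqrt{v_{n_k,\w}}$ with $\e_m\downarrow 0$, is not calibrated to give both a divergent probability sum and a vanishing Lindeberg ratio. Both $M_{n}$ and $\sqrt{v_{n,\w}}$ are of order $n^{1/s}$, so the probability of your event decays to zero at a rate you do not control as $\e_m\to 0$. The paper's event $\mathcal{S}_{\eta,d_k,a_k}\cap U_{\eta,d_k}$ is engineered more carefully: it asks for $2a_k$ ladder blocks of comparable (near-maximal) size rather than one uniformly small maximum, with $a_k=\lfloor\log\log k\rfloor\vee 1$. This gives probability at least $1/k$ (so $\sum_k 1/k=\infty$) while forcing $v_{k_m,\w}\ge a_{k_m}d_{k_m}^{2/s}$ and $\max_i\mu_{i,d_{k_m},\w}^2\le 2d_{k_m}^{2/s}$, hence a Lindeberg ratio bounded by $O(1/a_{k_m})\to 0$. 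This balance is not achievable by decreeing a vanishing $\e_m$ a priori.

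Finally, your step (iv) asserts the Lindeberg condition follows from ``$W_i\le M_{n_{k_m}}=o(\sqrt{v_{n_{k_m},\w}})$,'' but bounding $\mathrm{Var}_\w(T_{i+1}-T_i)$ does not by itself control the truncated second moment in the Lindeberg sum; you need quenched tail estimates for the individual crossing times. The paper gets these from the sub-exponential moment bound $E_\w^{\nu_{i-1}}(\bar{T}_{\nu_i}^{(d_k)})^j\le 2^j j!\,\mu_{i,d_k,\w}^j$ on ``large'' blocks (\cite[Lemma 5.9]{pzSL1}), yielding $P_\w^{\nu_{i-1}}(\bar{T}_{\nu_i}^{(d_k)}>x\mu_{i,d_k,\w})\le 2e^{-x/4}$, together with a separate argument that ``small'' blocks contribute $o(d_k^{2/s})$. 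This ingredient is absent from your sketch and is needed to close the proof.
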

\begin{thm}\label{qEXP}
Let Assumptions \ref{essentialasm} and \ref{techasm} hold, and let $s\in(1,2)$. Then $P-a.s.$ there exists a random subsequence $n_{k_m}=n_{k_m}(\w)$ of $n_k=2^{2^k}$ and non-deterministic random variables $v_{k_m,\w}$ such that 
\[
\lim_{m\ra\infty} P_\w\left( \frac{ T_{n_{k_m}} - E_\w T_{n_{k_m}} }{ \sqrt{v_{k_m,\w}} } \leq x \right) = \Psi(x+1), \qquad \forall x\in\R,
\]
and 
\[
\lim_{m\ra\infty} P_\w \left( \frac{X_{t_m} - n_{k_m}}{v_P \sqrt{v_{k_m,\w}} } \leq x \right) = 1-\Psi(-x+1), \qquad \forall x\in\R,
\] 
where $t_m=t_m(\w):= \left\lfloor E_\w T_{n_{k_m}} \right\rfloor$.
\end{thm}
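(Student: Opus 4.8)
The plan is to exploit the representation $T_n = \sum_{j=0}^{n-1} (T_{j+1}-T_j)$ together with the quenched formula $E_\w^j T_{j+1} = 1+2W_j$ from \eqref{QET}. Under $P_\w$, the increments $T_{j+1}-T_j$ are independent (but not identically distributed), so the quenched distribution of the centered sum $T_n - E_\w T_n$ is governed by how the quenched variances $\mathrm{Var}_\w(T_{j+1}-T_j)$ are distributed across the block $\{0,1,\dots,n-1\}$. The heuristic — mirroring the $s\in(0,1)$ analysis of \cite{pzSL1} — is that these variances are heavy-tailed: a single site (corresponding to a large value of $W_j$, i.e. a deep ``trap'') can contribute a variance comparable to the sum of all the others. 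Whether a quenched CLT or a reverse-exponential limit holds along a given scale $n_k = 2^{2^k}$ is then dictated by a Lindeberg-type dichotomy: if no single increment's variance dominates, the triangular-array CLT gives $\Phi$; if exactly one does (and that dominant increment, suitably normalized, is itself asymptotically exponential — reflecting the exponential-type tail of the time to cross one deep trap), one gets the centered reverse exponential $\Psi(x+1)$.

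The key steps, in order: (1) Establish tail estimates, under $P$, for the quenched variance $\sigma_j^2 := \mathrm{Var}_\w(T_{j+1}-T_j)$ — in particular that $P(\sigma_0^2 > u)$ decays like $u^{-s/2}$ up to slowly varying corrections, using the $\rho^s$-moment assumption and the renewal structure of $W_0 = \sum_{k\le 0}\Pi_{k,0}$; this should parallel the Kesten-type tail analysis in \cite{kksStable}. (2) Using independence of the environment across sites and the $n_k = 2^{2^k}$ spacing (so the scales are sparse enough for Borel–Cantelli arguments to apply cleanly), show that $P$-a.s. there are infinitely many $k$ along which the block $[0,n_k)$ contains a ``dominant'' trap: a site $j_k$ with $\sigma_{j_k}^2 \asymp V_{n_k}:=\sum_{j<n_k}\sigma_j^2$, while all other increments are negligible in the Lindeberg sense. (3) On this dominant-trap event, analyze the quenched law of the single large increment $(T_{j_k+1}-T_{j_k} - E_\w^{j_k}(T_{j_k+1}-T_{j_k}))/\sqrt{v_{k,\w}}$ and show it converges to a centered reverse exponential — this is the place where the fine structure of crossing time of one trap enters, and is where I expect the shift by $1$ in $\Psi(x+1)$ and the sign reversal to come from (the mean of a rate-one exponential being $1$). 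Combine with the negligibility of the rest via a conditional-CLT/stability argument to get the first display; (4) Transfer to $X_n$ via the inversion $X_{t_m} \approx n_{k_m}$ when $t_m = \lfloor E_\w T_{n_{k_m}}\rfloor$, using $v_P>0$ and the relation $T_n \leftrightarrow X_n$ as developed in Section \ref{gp}; the reflection $1-\Psi(-x+1)$ is exactly the image of $\Psi(x+1)$ under the time-to-space inversion with speed $v_P$.

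The main obstacle will be step (3): controlling the quenched law of a single trap-crossing time sharply enough to identify the limit as exactly exponential, rather than merely heavy-tailed, and simultaneously showing the contribution of all remaining sites is asymptotically negligible (Lindeberg) along the chosen random subsequence. This requires a careful conditional analysis — conditioning on the environment in a neighborhood of the dominant trap and showing the crossing time there behaves like a geometric sum (hence asymptotically exponential after scaling), while the complementary sum, conditioned appropriately, still satisfies a Lindeberg condition so that it contributes nothing in the limit. A secondary difficulty is the bookkeeping needed to make the ``infinitely often'' statements in steps (2) almost sure rather than merely in-probability; the doubly-exponential sparsity of $n_k$ is what makes the relevant events along different scales nearly independent, so that a second Borel–Cantelli argument applies, but verifying the quantitative independence and summability estimates will take some work.
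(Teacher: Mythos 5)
Your high-level picture is correct and closely parallels the paper's argument: heavy-tailed quenched variances, a dominant trap at favorable scales, exponential law for the single dominant crossing (via a geometric-sum / success-failure excursion structure, which the paper implements through a quenched Laplace transform analysis in Section \ref{Laplace}), a Lindeberg-type negligibility of the remainder, and then the time-to-space transfer using $v_P>0$ (the paper's Proposition \ref{generalprop}). You also correctly locate the source of the shift $\Psi(x+1)$.

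However, there is a genuine gap in step (2), and it propagates. You propose to decompose at the level of individual sites, $T_n = \sum_{j<n}(T_{j+1}-T_j)$, and to apply Borel--Cantelli across the scales $n_k=2^{2^k}$ ``using independence of the environment across sites.'' But the quenched variances $\sigma_j^2 = \mathrm{Var}_\w(T_{j+1}-T_j)$ are \emph{not} independent as functions of $\w$: each $W_j=\sum_{k\le j}\Pi_{k,j}$ depends on the \emph{entire} environment to the left of $j$, so $\sigma_j^2$ and $\sigma_{j'}^2$ share environmental data for all $j,j'$. Worse, the events ``a dominant trap exists in $[0,n_k)$'' for different $k$ involve nested windows of environment, so there is no independence across scales to feed a second Borel--Cantelli argument. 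The paper resolves this with two pieces of machinery your plan omits: (i) the ladder-location decomposition together with the measure $Q = P(\cdot\mid\mathcal R)$, which turns the blocks $[\nu_{i-1},\nu_i)$ into i.i.d.\ pieces of environment; and (ii) the reflection modification $\bar T^{(n)}$, which kills the dependence of a block's crossing time on the far-left environment, so that the relevant events $\mathcal D'_{k,C,\eta}$ depend only on the environment between $\nu_{n_{k-1}-b_{d_k}}$ and $\nu_{n_k}$, and hence are genuinely independent for $k$ in an arithmetic progression. Without reflections, even the ladder-block crossing times have quenched variances that are not independent across disjoint blocks (since $\mathrm{Var}_\w(T_{\nu_i}-T_{\nu_{i-1}})$ depends on $\w$ left of $\nu_{i-1}$), so the Borel--Cantelli step would fail. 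Relatedly, one must also control the error introduced by the reflections (the paper's \eqref{ett}, Lemma \ref{Vmdublb}, and the estimates in Proposition \ref{generalprop}'s proof), which your plan does not budget for. In short: the site-level decomposition and the unqualified independence claim are the wrong foundation; the ladder/$Q$/reflection structure is not an optional refinement but the load-bearing part of steps (2)--(3).
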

\noindent\textbf{Remarks:}\\
\textbf{1.} Note that Theorems \ref{qCLT} and \ref{qEXP}
preclude the possiblity of quenched analogues of the annealed
statements \eqref{annealedstableT} and \eqref{annealedstableX}. \\
\textbf{2.} The choice of Gaussian and exponential distributions in Theorems \ref{qCLT} and \ref{qEXP} are the two extremes of what quenched limiting distributions can be found along random subsequences. In fact, it will be shown in Corollary \ref{explimit} that $T_n$ is approximately the sum of a finite number of exponential random variables with random (depending on the environment) parameters. Thus, we expect in fact that any distribution which is the sum of (or limit of sums of) exponential random variables can be acheived as a quenched limiting distribution of $T_n$ along a random subsequence. \\
\textbf{3.} The sequence $n_k=2^{2^k}$ in Theorems \ref{qCLT} and \ref{qEXP} is chosen only for convenience. In fact, for any sequence $n_k$ growing sufficiently fast, $P-a.s.$ there will be a random subsequence $n_{k_m}(\w)$ such that the conclusions of Theorems \ref{qCLT} and \ref{qEXP} hold. \\
\textbf{4.} The definition of $v_{k_m,\w}$ is given below in \eqref{dkvkdef}, and similar to Theorem \ref{Varstable}, it can be shown that 
$\lim_{n\ra\infty} P\left( n_k^{-2/s} v_{k,\w} \leq x \right) = L_{\frac{s}{2},b}(x)$ for some $b>0$. Also, from \eqref{XTLLN} we have that $t_m \sim \E T_1 n_{k_m}$. Thus, the scaling in Theorems \ref{qCLT} and \ref{qEXP} is of the same order as the annealed scaling but cannot be replaced by a deterministic scaling.

As in \cite{pzSL1}, define the ``ladder locations'' $\nu_i$ of the environment by
\begin{align}
\nu_0 = 0, \quad\text{and}\quad \nu_i =
\begin{cases}
\inf\{n > \nu_{i-1}: \Pi_{\nu_{i-1},n-1} < 1\}, &\quad  i \geq 1,\\
\sup \{j < \nu_{i+1}: \Pi_{k,j-1}<1,\quad \forall k<j \}, &\quad  i \leq -1
\,.\end{cases}
\label{nudef}
\end{align}
Throughout the remainder of the paper we will let $\nu=\nu_1$. 
We will sometimes refer to sections of the environment between
$\nu_{i-1}$ and $\nu_i -1$ as ``blocks'' of the environment. Note
that the block between $\nu_{-1}$ and $\nu_0 -1$ is different from
all the other blocks between consecutive ladder locations (in particular it can be that $\Pi_{\nu_{-1},\nu_0-1} \geq 1$), and that all the other blocks have the same distribution as the block from $0$ to $\nu-1$.  As in \cite{pzSL1} we define
the measure $Q$ on environments by $Q(\cdot):=P(\cdot\,|\mathcal{R})$, where
\[
\mathcal{R}:=\{ \w\in\Omega: \Pi_{-k,-1} < 1,\quad\forall k \geq 1\} = \left\{ \w \in \Omega: \sum_{i=-k}^{-1} \log \rho_i < 0, \quad \forall k \geq 1 \right\}.
\]
Note that $P(\mathcal{R}) > 0$ since $E_P \log \rho < 0$. 
$Q$ is defined so that the blocks of the environment between ladder locations are i.i.d.
under $Q$, all with distribution the same as that of the block
from $0$ to $\nu -1$ under $P$. In particular $P$ and $Q$ agree on $\s( \w_i: i\geq 0)$. 

For any random variable $Z$, define the quenched variance $Var_\w Z := E_\w (Z-E_\w Z)^2$. 
In \cite[Theorem 1.1]{pzSL1} it was proved that when $s\in(0,1)$, $n^{-1/s} E_\w T_{\nu_n}$ converges in distribution (under $Q$) to a stable distribution of index $s$. 
Correspondingly, when $s<2$ we will prove the following theorem:
\begin{thm} \label{Varstable}
Let Assumptions \ref{essentialasm} and \ref{techasm} hold, and let $s<2$. Then there exists a $b>0$ such that 
\begin{equation}
\lim_{n\ra\infty} Q\left(\frac{Var_\w T_{\nu_n}}{n^{2/s}} \leq x \right) = \lim_{n\ra\infty} Q\left( \frac{1}{n^{2/s}} \sum_{i=1}^n \left( E_\w^{\nu_{i-1}} T_{\nu_i} \right)^2 \leq x \right) = L_{\frac{s}{2},b}(x) \, .  \label{stableET2}
\end{equation}
\end{thm}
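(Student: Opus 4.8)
The plan is to reduce the statement to a stable-law limit theorem for a sum of i.i.d.\ heavy-tailed random variables under $Q$, and then to verify the tail condition that puts the summands in the domain of attraction of an $s/2$-stable law.

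First I would establish the two equalities in \eqref{stableET2} are asymptotically equivalent. Writing $T_{\nu_n} = \sum_{i=1}^n (T_{\nu_i} - T_{\nu_{i-1}})$ and using the Markov property of the walk together with the block structure of the environment under $Q$, one has $Var_\w T_{\nu_n} = \sum_{i=1}^n Var_\w^{\nu_{i-1}} T_{\nu_i} + (\text{cross terms})$, where the crossing-time increments $T_{\nu_i} - T_{\nu_{i-1}}$ are independent under $P_\w$ once $\w$ is fixed, so in fact $Var_\w T_{\nu_n} = \sum_{i=1}^n Var_\w^{\nu_{i-1}} T_{\nu_i}$ exactly. Next, using the formula \eqref{QET} for quenched expected crossing times and the standard second-moment computation (as in \cite{zRWRE}), one shows $Var_\w^{\nu_{i-1}} T_{\nu_i}$ and $\left(E_\w^{\nu_{i-1}} T_{\nu_i}\right)^2$ differ by lower-order terms: both are dominated by the square of the partial products $\Pi_{\cdot,\nu_i-1}$ inside the $i$-th block, and the ratio of the two sums tends to a constant (or the difference is $o(n^{2/s})$ in probability). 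This reduces the problem to the sum $\sum_{i=1}^n Y_i$ with $Y_i := \left(E_\w^{\nu_{i-1}} T_{\nu_i}\right)^2$, which are i.i.d.\ under $Q$ with the law of $Y_1 = (E_\w^0 T_\nu)^2 = (1+2W_{0,\nu-1})^2$ (suitably interpreted within a block).

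The core step is the tail estimate: I would show $Q(Y_1 > x) \sim C x^{-s/2}$ as $x\to\infty$ for some $C>0$, equivalently $Q(E_\w^0 T_\nu > x) \sim C' x^{-s}$. This is where the hypothesis $E_P\rho^s=1$ enters, via a renewal-theoretic / Kesten-type argument: the quenched expected crossing time of a block is governed by $W_{0,\nu-1} = \sum_{k=0}^{\nu-1}\Pi_{k,\nu-1}$, whose tail is determined by the large deviations of the product $\Pi_{0,\nu-1}$; the non-lattice assumption and $E_P(\rho^s\log\rho)<\infty$ in Assumption~\ref{techasm} give the sharp asymptotics $P(\Pi_{0,m}>x)\asymp x^{-s}$ summably, and the ladder-time structure turns this into the $x^{-s}$ tail for $E_\w^0 T_\nu$ under $Q$. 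I expect to be able to cite or adapt the corresponding estimate from \cite{kksStable} and \cite{pzSL1}, where exactly this tail behavior (for $s\in(0,1)$, for $E_\w T_{\nu_n}$ itself) was derived; the difference here is only that we square, changing the index from $s$ to $s/2$, and that we need $s/2 < 1$, i.e.\ $s<2$, which is precisely the hypothesis.

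Finally, given the tail $Q(Y_1>x)\sim C x^{-s/2}$ with $s/2\in(0,1)$, the classical stable limit theorem for sums of i.i.d.\ nonnegative random variables in the domain of attraction of a one-sided stable law of index $s/2$ yields $n^{-2/s}\sum_{i=1}^n Y_i \limd$ a stable random variable, whose distribution function we call $L_{s/2,b}$ (one-sided, so the skewness parameter is extremal and only the scale $b>0$ is free); this gives the second equality in \eqref{stableET2}, and combined with the first step, the first equality as well. The main obstacle is the sharp tail asymptotic for $Q(E_\w^0 T_\nu > x)$: controlling $W_{0,\nu-1}$ requires simultaneously handling the random block length $\nu$ and the product $\Pi_{0,\nu-1}$, and showing that the dominant contribution to large values comes from a single large partial product within the block (a "one big jump" phenomenon), together with the precise constant; this is the technical heart and is where I would lean most heavily on the Kesten-type renewal arguments already present in \cite{kksStable} and \cite{pzSL1}.
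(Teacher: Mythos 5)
Your decomposition $Var_\w T_{\nu_n} = \sum_{i=1}^n Var_\w^{\nu_{i-1}} T_{\nu_i}$ is correct (the quenched hitting-time increments are independent under $P_\w$ by the strong Markov property), and your tail estimate $Q\left( (E_\w^{\nu_{i-1}} T_{\nu_i})^2 > x \right) \sim C x^{-s/2}$ is indeed the right target (this is Theorem~\ref{VETtail} from \cite{pzSL1}, which the paper also cites). However, there is a serious gap at the heart of your reduction: the summands $Y_i = \left(E_\w^{\nu_{i-1}} T_{\nu_i}\right)^2$ are \emph{not} i.i.d.\ under $Q$, so the classical one-sided stable limit theorem for i.i.d.\ sums does not apply. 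The problem is that $E_\w^{\nu_{i-1}} T_{\nu_i}$ depends on the entire environment to the \emph{left} of $\nu_i$ --- not merely on the block $[\nu_{i-1}, \nu_i)$ --- because the walk can backtrack arbitrarily far before reaching $\nu_i$. Concretely, from \eqref{QET}, $E_\w^{\nu_{i-1}} T_{\nu_i} = \sum_{j=\nu_{i-1}}^{\nu_i - 1}(1+2W_j)$, and $W_j = \sum_{k\le j}\Pi_{k,j}$ is an infinite sum reaching all the way back. (Your formula $Y_1 = (1+2W_{0,\nu-1})^2$ is also not quite right; it silently truncates the sum, which hints at the issue.) Under $Q$ the blocks themselves are i.i.d., but the quantities $E_\w^{\nu_{i-1}} T_{\nu_i}$ share dependence through the common left tail, so your ``suitably interpreted within a block'' gloss is hiding the hard part.

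The paper's proof handles precisely this difficulty, in two moves you are missing. First, it passes from $T$ to the reflected walk $\bar{T}^{(n)}$, which makes each crossing time depend only on the preceding $b_n = \lfloor \log^2 n\rfloor$ blocks, reducing the range of dependence from infinite to slowly-growing; the error in doing so is controlled by \cite[Lemma~3.2 \& Theorem~1.1]{pzSL1} (this is the term \eqref{switchET2}). Second --- and this is the crucial structural difference --- even after reflecting, consecutive $\left( E_\w^{\nu_{i-1}} \bar T^{(n)}_{\nu_i} \right)^2$ are still dependent (they overlap over up to $b_n$ blocks), so the paper does \emph{not} invoke the i.i.d.\ stable CLT. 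Instead it truncates to the ``large'' blocks $M_i > n^{(1-\e)/s}$ (showing the small-block contribution \eqref{smallET2} is negligible by a counting argument) and then applies \cite[Theorem~5.1(III)]{kGPD} (Kobus), a stable limit theorem for triangular arrays of \emph{dependent} random vectors, feeding in the tail asymptotics from Theorem~\ref{VETtail}, a mixing estimate from \cite[Lemma~3.4]{pzSL1}, and a truncated-moment condition \eqref{truncexp} which uses $s<2$ to make $\int_0^{\d^2}x^{-s/2}dx$ finite. To repair your argument you would need to replace your final step by a dependent-array stable limit theorem and supply the accompanying mixing bound; without that, the conclusion does not follow from the tail estimate alone.
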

\noindent\textbf{Remarks:} \\
\textbf{1.} The constant $b$ in the above theorem may not be the same as in \eqref{annealedstableT} and \eqref{annealedstableX}. \\
\textbf{2.} Theorem \ref{Varstable} can be used to show that $\lim_{n\ra\infty} P\left(\frac{Var_\w T_{n}}{n^{2/s}} \leq x \right) = L_{\frac{s}{2},b'}(x)$ for some $b'>0$, but we will not prove this since we do not use it for the other results in this paper.

A major difficulty in analyzing $T_{\nu_n}$ is that the crossing time from $\nu_{i-1}$ to $\nu_i$ depends on the entire environment to the left of $\nu_i$. Thus $Var_\w (T_{\nu_i} - T_{\nu_{i-1}})$ and $Var_\w (T_{\nu_j} - T_{\nu_{j-1}})$ are not independent even if $|i-j|$ is large. 
In order to make the crossing times of blocks that are far apart essentially
independent, we introduce some reflections to the RWRE. For
$n=1,2,\ldots$, define
\begin{equation}
b_n:= \lfloor \log^2(n) \rfloor. \label{bdef}
\end{equation}
Let $\bar{X}_t^{(n)}$ be the random walk that is the same as $X_t$
with the added condition that after reaching $\nu_k$ the
environment is modified by setting $\w_{\nu_{k-b_n}} = 1 $ (i.e.
never allow the walk to backtrack more than $\log^2(n)$ blocks). 
We couple $\bar{X}_t^{(n)}$ with the random walk $X_t$ in such a way that $\bar{X}_t^{(n)} \geq X_t$ with equality holding until
the first time $t$ when the walk $\bar{X}_t^{(n)}$ reaches a modified environment location.
Denote by $\bar{T}_{x}^{(n)}$ the corresponding hitting
times for the walk $\bar{X}_t^{(n)}$. 
It was shown in \cite[Lemma 4.5]{pzSL1} that $\lim_{n\ra\infty} P_\w( T_{\nu_n} \neq \bar{T}_{\nu_n}^{(n)} ) = 0$, $P-a.s.$ so that in fact with high probability the added reflections do not affect the walk at all before $T_{\nu_n}$. 
For ease of notation we define 
\[
\mu_{i,n,\w} := E_\w^{\nu_{i-1}} \bar{T}^{(n)}_{\nu_i}, \quad\text{and}\quad \s_{i,n,\w}^2 := Var_\w \left( \bar{T}^{(n)}_{\nu_i} - \bar{T}^{(n)}_{\nu_{i-1}} \right). 
\]

The structure of the paper is as follows:
In Section \ref{gp} we prove the following general proposition that allows us to easily transfer quenched limit laws from subsequences of $T_n$ to $X_n$.
\begin{prop} \label{generalprop}
Let Assumptions \ref{essentialasm} and \ref{techasm} hold, and let $s\in(1,2)$. Also, let $n_k$ be a sequence of integers growing fast enough so that $\lim_{k\ra\infty} \frac{n_k}{n_{k-1}^{1+\d}} = \infty$ for some $\d>0$, and define
\begin{equation}
d_k:= n_k-n_{k-1},\quad\text{and}\quad v_{k,\w} := \sum_{i=n_{k-1}+1}^{n_k} \s_{i,d_k,\w}^2 = Var_\w \left( \bar{T}^{(d_{k})}_{\nu_{n_k}} - \bar{T}^{(d_{k})}_{\nu_{n_{k-1}}} \right) \, . \label{dkvkdef}
\end{equation}
Assume that $F$ is a continuous distribution function for which $P-a.s.$ there exists a subsequence $n_{k_m}= n_{k_m}(\w)$ such that for $\a_m:= n_{k_m-1}$,
\[
\lim_{m\ra\infty} P_\w^{\nu_{\a_m}}\left( \frac{\bar{T}^{(d_{k_m})}_{x_m} - E_\w^{\nu_{\a_m}}\bar{T}^{(d_{k_m})}_{x_m} }{\sqrt{v_{k_m,\w}}} \leq y \right) = F(y), \quad \forall y\in \R,
\]
for any sequence $x_m \sim n_{k_m}$. Then, $P-a.s.$ for all $y\in \R$ we also have
\begin{equation}
\lim_{m\ra\infty} P_\w\left( \frac{T_{x_m} - E_\w T_{x_m} }{\sqrt{v_{k_m,\w}}} \leq y \right) = F(y), \label{Tlim}
\end{equation}
for any $x_m\sim n_{k_m}$, and 
\begin{equation}
\lim_{m\ra\infty} P_\w\left( \frac{X_{t_m} - n_{k_m} }{ v_P\sqrt{v_{k_m,\w}}} \leq y \right) = 1-F(-y),   \label{Xlim}
\end{equation}
where $t_m:= \left\lfloor E_\w T_{n_{k_m}} \right\rfloor$. 
\end{prop}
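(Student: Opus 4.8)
The plan is to pass from the reflected walk $\bar{X}^{(d_{k_m})}$ to the original walk $X$ in two stages, and then from hitting times of $X$ to the position of $X$ by inverting. First I would argue that, with $P$-probability one, the reflected and unreflected hitting times of $\nu_{x_m}$ for $x_m \sim n_{k_m}$ coincide for all large $m$ on the event relevant to the walk; this is where the choice $b_n = \lfloor \log^2 n \rfloor$ and the growth condition $n_k/n_{k-1}^{1+\d}\to\infty$ enter. Concretely, under $P_\w$ the walk started near $\nu_{n_{k_m-1}}$ that reaches $\nu_{n_{k_m}}$ will, with overwhelming probability, never backtrack more than $\log^2(d_{k_m})$ ladder blocks — this uses the exponential backtracking estimate $\P(T_{-k}<\infty)\le e^{-Ck}$ cited from \cite{gsMVSS}, together with a union bound over the $\sim d_{k_m}$ blocks and Borel–Cantelli over $m$ (the fast growth of $n_k$ makes the series summable). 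On this event $\bar{T}^{(d_{k_m})}_{x_m}$ and $T_{x_m}-T_{\nu_{\a_m}}$ agree, and similarly $E_\w^{\nu_{\a_m}}\bar{T}^{(d_{k_m})}_{x_m}$ differs from $E_\w^{\nu_{\a_m}} T_{x_m}$ only by a negligible amount (again by the backtracking bound applied in expectation).

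Next I would handle the ``initial segment'' $T_{\nu_{\a_m}} = T_{\nu_{n_{k_m-1}}}$, showing it is negligible compared to $\sqrt{v_{k_m,\w}}$. The point is that $v_{k_m,\w}$ is a sum of $d_{k_m} \sim n_{k_m}$ terms, so by the heavy-tail heuristic (made precise via Theorem \ref{Varstable}, or rather its analogue over the relevant block range) $v_{k_m,\w}$ is of order $n_{k_m}^{2/s}$ up to slowly varying corrections, while $T_{\nu_{n_{k_m-1}}}$ is of order at most $n_{k_m-1}^{1/s}$ times slowly varying factors (in $P_\w$-probability, using a crude first/second-moment bound); the gap $n_{k_m-1}\ll n_{k_m}^{1/(1+\d)}$ forces $n_{k_m-1}^{1/s} = o(n_{k_m}^{1/s})$, hence $T_{\nu_{\a_m}}/\sqrt{v_{k_m,\w}}\to 0$ in $P_\w$-probability, $P$-a.s. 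Combining the two stages with Slutsky's theorem gives \eqref{Tlim} for $x_m = \nu_{n_{k_m}}$; a further routine argument (the walk hitting $n_{k_m}$ versus hitting $\nu_{n_{k_m}}$ differ by $O(\nu_{n_{k_m}}-n_{k_m})$ in index, which is $o$ of the relevant scale by the law of large numbers for $\nu_n/n$ and the same heavy-tail comparison) extends it to $x_m = n_{k_m}$ and indeed to any $x_m\sim n_{k_m}$.

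For \eqref{Xlim} I would use the standard inversion relating $\{X_t \ge n\}$ to $\{T_n \le t\}$. Writing $t_m = \lfloor E_\w T_{n_{k_m}}\rfloor$, the event $\{X_{t_m} \ge n_{k_m} + v_P\sqrt{v_{k_m,\w}}\,y\}$ is, up to a small overshoot correction, the event $\{T_{n_{k_m} + \lceil v_P\sqrt{v_{k_m,\w}}\,y\rceil} \le t_m\}$. Centering the hitting time on the left by $E_\w T$ at the shifted endpoint and using that $E_\w T_{n+j} - E_\w T_n \approx j/v_P$ for $j = o(n)$ (since $E_\w T_{n+1}-E_\w T_n = 1+2W_{\nu_{\cdot}}$ has $Q$-mean $1/v_P$ and the fluctuations over a window of length $o(n)$ are lower order — this needs the $s>1$ finiteness $E_P W_0 < \infty$), one finds $E_\w T_{n_{k_m}+j} - t_m \approx j/v_P \approx \sqrt{v_{k_m,\w}}\,y$, so the probability converges to $F$ evaluated at $-y$ after the sign flip inherent in the inversion, i.e. to $1-F(-y)$. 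The overshoot of the walk at level $n_{k_m}+j$ is $O(1)$ in $P_\w$-probability uniformly (it is stochastically dominated using ellipticity-type bounds available from Assumption \ref{techasm}), hence negligible against $\sqrt{v_{k_m,\w}}$.

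The main obstacle I anticipate is the second step: controlling the size of $v_{k_m,\w}$ from below relative to the initial segment $T_{\nu_{\a_m}}$ and to the overshoot/endpoint-mismatch errors, uniformly along the random subsequence and $P$-almost surely rather than merely in distribution. Theorem \ref{Varstable} only gives convergence in $Q$-distribution of $n^{-2/s}Var_\w T_{\nu_n}$, so to get an a.s. lower bound of the form $v_{k_m,\w} \ge n_{k_m}^{2/s - \e}$ for all large $m$ one has to combine a tail estimate ($Q(n^{-2/s}Var_\w T_{\nu_n} < \delta)$ small for small $\delta$, uniformly in $n$) with Borel–Cantelli along $n_k$, and transfer from $Q$ back to $P$ using the fact that $P$ and $Q$ agree on $\s(\w_i : i\ge 0)$ together with the reflection which makes $v_{k_m,\w}$ depend only on blocks with index $\ge \a_m$. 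Getting all these pieces to line up — the heavy-tailed denominator, the two truncation errors, and the a.s.-along-a-random-subsequence bookkeeping — is the delicate part; the rest is Slutsky plus the elementary hitting-time/position duality.
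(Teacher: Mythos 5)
Your overall architecture matches the paper's: (i) pass from the reflected walk to the true walk and kill the initial segment $T_{\nu_{\a_m}}$, giving \eqref{Tlim}; (ii) invert $\{X_t \geq n\}\leftrightarrow\{T_n\leq t\}$ and shift the centering, giving \eqref{Xlim}. The step (i) logic is fine; the paper actually does it in a slightly cleaner way (comparing $T$ with a distance-$b_n$-reflected walk $\tilde T^{(n)}$, computing $\E(T_1-\tilde T_1^{(n)})$ explicitly via \eqref{QET}, and using Markov plus Borel--Cantelli), whereas you propose a union-bound over the annealed backtracking estimate; both should work.

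The gap is in step (ii). You assert ``$E_\w T_{n+j}-E_\w T_n\approx j/v_P$ for $j=o(n)$, since the fluctuations over a window of length $o(n)$ are lower order --- this needs $E_P W_0<\infty$.'' This is exactly the statement of Lemma~\ref{ETaverage}, and it is the technical heart of the proposition, not a routine aside. The increments $E_\w T_{n+1}-E_\w T_n=1+2W_n$ have tail exponent $s\in(1,2)$, so finite mean but infinite variance, and they are strongly correlated. Worse, the window size $\lceil y\,v_P\sqrt{v_{k,\w}}\rceil$ is random (environment-dependent), and you need the convergence $P$-a.s.\ uniformly over such windows. Merely having $E_P W_0<\infty$ gives a LLN, not the quantitative rate needed. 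The paper handles this in Lemma~\ref{maxLD}: one first replaces $T$ by $\tilde T^{(n)}$ (distance-reflection, so that $\kappa_i^{(n)}$ depends only on $b_n$ coordinates), then decomposes the sum into $b_n$ interleaved independent blocks, applies the Doob--Kolmogorov maximal inequality and the Marcinkiewicz--Zygmund inequality with exponent $\gamma\in(1,s)$, and tunes $\gamma$ close to $s$ to get $P\bigl(\max_{m\leq n}|E_\w T_m-m\E T_1|\geq n^{1-\d'}\bigr)=o(n^{-(s-1)/2})$. This is then combined with the two-sided bound $v_{k,\w}\in(d_k^{2/s-\d},d_k^{2/s+\d})$ from Corollary~\ref{vkasym} to control the random window. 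Without this maximal large-deviation estimate, your Slutsky argument for \eqref{Xlim} has no rigorous foundation, and I don't see how to avoid something of this strength given the heavy tails of $W_n$.

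Two smaller points. First, you invoke ``ellipticity-type bounds available from Assumption~\ref{techasm}'' to control the overshoot $X^*_{t_m}-X_{t_m}$, but Assumption~\ref{techasm} contains no uniform ellipticity; the paper instead uses \cite[Lemma 4.6]{pzSL1} together with \eqref{nutail} to get $X^*_{t_m}-X_{t_m}=o(\log^2 t_m)$, and then compares to $\sqrt{v_{k_m,\w}}\geq n_{k_m}^{1/s-\d}$. Second, the obstacle you flag at the end --- getting an a.s.\ lower bound on $v_{k,\w}$ --- is indeed needed, but it is the more routine part (exponential lower-tail decay, Corollary~\ref{vkasym}); the genuinely delicate part is the random-window averaging of $E_\w T$ described above, which your sketch treats as routine.
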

Then in Sections \ref{qGauss} and \ref{exponential} we use Theorem \ref{Varstable} to find subsequences $n_{k_m}(\w)$ that allow us to apply Proposition \ref{generalprop}. To find a subsequence that gives Gaussian behavior of $T_{n_{k_m}}$ we find a subsequence where none of the crossing times of the first $n_{k_m}$ blocks is too much larger than all the others and then use the Linberg-Feller condition for triangular arrays. In contrast, to find a subsequence that gives exponential behavior of $T_{n_{k_m}}$ we first prove that the crossing times of ``large'' blocks is approximately exponential in distribution. Then we find a subsequence where the crossing time of one of the first $n_{k_m}$ blocks dominates the total crossing time of the first $n_{k_m}$ blocks. Finally, Section \ref{qvs} contains the proof of Theorem \ref{Varstable} which is similar to that of \cite[Theorem 1.1]{pzSL1}.

Before continuing with the proofs of the main theorems we recall some notation and results from \cite{pzSL1} that will be used throughout the paper.
First, recall that from \cite[Lemma 2.1]{pzSL1} there exist constants $C_1,C_2>0$ such that 
\begin{equation}
P(\nu > x) \leq C_1 e^{-C_2 x}, \quad \forall x\geq 0. \label{nutail}
\end{equation}
Then, since $\nu_n = \sum_{i=1}^n \nu_i - \nu_{i-1}$ and the $\nu_i-\nu_{i-1}$ are i.i.d., the law of large numbers gives that
\begin{equation}
\lim_{n\ra\infty} \frac{\nu_n}{n} = E_P \nu =: \bar \nu < \infty, \qquad P-a.s. \label{nuLLN}
\end{equation}
In \cite{pzSL1} the following formulas for the quenched expectation and variance of $T_\nu$ were given:
\begin{equation}
E_\w T_\nu = \nu + 2 \sum_{j=0}^{\nu-1} W_j, \quad\text{and}\quad Var_\w T_\nu = 4\sum_{j=0}^{\nu-1}(W_{j}+W_{j}^2) + 8\sum_{j=0}^{\nu-1}\sum_{i< j} \Pi_{i+1,j}(W_{i}+W_{i}^2). \label{qvformula}
\end{equation}
Note that since the added reflections only decrease crossing times we obviously have $T_\nu \geq \bar{T}^{(n)}_\nu$ and $E_\w T_\nu \geq E_\w \bar{T}^{(n)}_\nu$ for any $n$. Also, since \eqref{qvformula} holds for any environment $\w$, the formula for $Var_\w \bar{T}^{(n)}_\nu$ is the same as in \eqref{qvformula} but with $\rho_{\nu_{-b_n}}$ replaced by 0. In particular, this shows that $Var_\w T_\nu \geq Var_\w \bar{T}^{(n)}_\nu$ for any $n$. As in \cite{pzSL1} define for any integer $i$
\begin{equation}
M_i:=\max \{ \Pi_{\nu_{i-1}, j} : j\in[\nu_{i-1},\nu_i) \} \, . \label{Mdef}
\end{equation} 
Then \cite[Theorem 1]{iEV} gives that there exists a constant $C_3<\infty$ such that 
\begin{equation}
Q(M_i > x) = P(M_1 > x) \sim C_3 x^{-s}. 	\label{Mtail}
\end{equation}
Note that $M_1 \leq \max_{0\leq j<\nu} W_j$. Therefore,
from the formulas for $E_\w T_\nu$ and $Var_\w T_\nu$ in \eqref{qvformula} it is easy to see that $E_\w T_\nu  \geq M_1$ and $Var_\w T_\nu  \geq M_1^2$ (the same also being true with $\bar{T}_\nu^{(n)}$). 
Finally, recall the following results from \cite{pzSL1}:
\begin{thm}[\textbf{Lemma 3.3 \& Theorem 5.1 in \cite{pzSL1}}] \label{VETtail}
There exists a constant $K_\infty \in (0,\infty)$ such that 
\[
Q\left( Var_\w T_\nu > x \right) \sim Q\left( (E_\w T_\nu)^2 > x \right) \sim K_\infty x^{-s/2}, \qquad \text{as } x\ra\infty.
\]
Moreover, for any $\e>0$ and $x>0$ 
\begin{align*}
Q\left( Var_\w \bar{T}^{(n)}_\nu > x n^{2/s}, \: M_1 > n^{(1-\e)/s} \right) \sim Q\left( \left(E_\w \bar{T}^{(n)}_\nu\right)^2 > x n^{2/s}, \: M_1 > n^{(1-\e)/s} \right) \sim K_\infty x^{-s/2} \frac{1}{n}, 
\end{align*}
as $n\ra\infty$. 
%
\end{thm}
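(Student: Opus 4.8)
The plan is to establish the tail asymptotics for $Var_\w T_\nu$ and $(E_\w T_\nu)^2$ by first showing both are dominated, in the tail, by the single largest partial product $M_1^2$ (resp. $M_1$), whose tail is given by \eqref{Mtail}, and then upgrading this to the joint statement with the reflected walk. First I would recall from \eqref{qvformula} that $E_\w T_\nu = \nu + 2\sum_{j=0}^{\nu-1} W_j$ and that $Var_\w T_\nu$ is a sum of the $4(W_j+W_j^2)$ terms plus the double-sum cross terms $8\sum_{j}\sum_{i<j}\Pi_{i+1,j}(W_i+W_i^2)$. The key structural observation is that $W_j = \sum_{k\le j}\Pi_{k,j}$, and on the block $[0,\nu)$ the partial products $\Pi_{\nu_{-1},j}$ — hence $W_j$ — are comparable to $M_1 = \max_{0\le j<\nu}\Pi_{0,j}$ up to factors controlled by the geometric-type tails \eqref{nutail} and the fact that under $Q$ the environment to the left of $0$ does not contribute blow-up (the defining event $\mathcal{R}$ forces $\Pi_{-k,-1}<1$). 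So the heavy tail of $Var_\w T_\nu$ comes precisely from the event that some $\Pi_{0,j}$ is large, and on that event $Var_\w T_\nu \asymp M_1^2$ and $E_\w T_\nu \asymp M_1$, while the "bulk" contributions (from $\nu$ being large, or from many moderate $W_j$'s) have lighter tails because $\nu$ has exponential tails. This reduces the first display to showing $Q(M_1^2 > x)\sim C_3 x^{-s/2}$ together with estimates showing the discrepancies $|Var_\w T_\nu - (\text{const})M_1^2|$ and $|E_\w T_\nu - (\text{const})M_1|$ have tails $o(x^{-s/2})$; the matching constant $K_\infty$ is then read off. Here I would lean on the precise Kesten-renewal-type estimate \cite{iEV} behind \eqref{Mtail} and on a careful accounting — in the style of \cite{pzSL1} — of how $W_j$ relates to $\Pi_{0,j}$ and how the cross terms $\Pi_{i+1,j}W_i$ telescope.

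For the "moreover" part with the reflected walk $\bar T^{(n)}_\nu$, the point is that $Var_\w \bar T^{(n)}_\nu$ uses the same formula \eqref{qvformula} but with $\rho_{\nu_{-b_n}}$ set to $0$ — i.e. it only truncates the influence of blocks more than $b_n = \lfloor\log^2 n\rfloor$ to the left — so on the event $\{M_1 > n^{(1-\e)/s}\}$, where $M_1$ already lives inside the block $[0,\nu)$, one has $Var_\w \bar T^{(n)}_\nu$ and $Var_\w T_\nu$ agreeing up to lower-order corrections with probability $1 - O(e^{-C\log^2 n})$. Intersecting with $\{M_1 > n^{(1-\e)/s}\}$ and using the first part together with \eqref{Mtail}, one gets $Q(Var_\w \bar T^{(n)}_\nu > x n^{2/s}, M_1 > n^{(1-\e)/s}) \sim Q(M_1^2 > x n^{2/s}) \sim C_3 x^{-s/2} n^{-1}$ after matching $K_\infty = C_3$ via the first display — more precisely the constant $K_\infty$ in the first display is the correct one and $C_3$ enters through the proportionality between $M_1^2$ and $Var_\w T_\nu$ in the tail. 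The extra refinement relative to the first display is that one must check the correction terms are $o(n^{-1})$ \emph{uniformly} once restricted to $M_1 > n^{(1-\e)/s}$, which is where the choice $b_n = \log^2 n$ and the exponential tail \eqref{nutail} are used.

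I expect the main obstacle to be the first part: showing that $Var_\w T_\nu$ and $M_1^2$ have the \emph{same} tail with the \emph{same} constant, not merely comparable tails. The cross-term sum $8\sum_j\sum_{i<j}\Pi_{i+1,j}(W_i+W_i^2)$ is genuinely delicate — individually these terms can be as large as $M_1^2$, so one must argue that the event driving the heavy tail is "one big excursion" and that on it the cross terms and the diagonal terms combine into a clean multiple of $M_1^2$ (with the subleading configurations, where two or more partial products are simultaneously moderately large, contributing only $o(x^{-s/2})$). Controlling this requires a decomposition of the block $[0,\nu)$ around the location $j^* = \arg\max \Pi_{0,j}$ and showing that both the "before $j^*$" and "after $j^*$" pieces are, conditionally, of constant order — essentially the content of \cite[Lemma 3.3]{pzSL1}, which I would invoke and adapt. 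The reflected-walk refinement is then comparatively routine, being a perturbation of the first part by an event of probability $e^{-C\log^2 n} = o(n^{-1})$.
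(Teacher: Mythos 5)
You should note at the outset that the paper does not prove Theorem~\ref{VETtail}: the bracketed label (``Lemma 3.3 \& Theorem 5.1 in \cite{pzSL1}'') marks it as a recalled result from Peterson--Zeitouni, and the surrounding text (``Finally, recall the following results from~\cite{pzSL1}'') confirms it is imported without proof. So there is no in-paper argument to compare your proposal against.

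On the merits of your sketch: the ``one big excursion'' heuristic --- the tail of $Var_\w T_\nu$ and of $(E_\w T_\nu)^2$ being driven by a single large partial product $\Pi_{0,j}$, with the rest of the block contributing only multiplicative corrections of constant order --- is the right qualitative picture and is indeed the mechanism exploited in~\cite{pzSL1}, and your use of~\eqref{Mtail} for the exponent $-s/2$ and of~\eqref{nutail} to discard long blocks is appropriate. Two points, however, need more care. First, you assert at one point that $K_\infty = C_3$ before (correctly) retracting it; these constants do differ, since on the tail event $E_\w T_\nu$ is $M_1$ multiplied by a nontrivial random prefactor (from the ``before $j^*$'' and ``after $j^*$'' pieces and from the dependence of $W_j$ on the environment left of zero), and $K_\infty$ absorbs the $s$-th moment of that prefactor. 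Quantifying this prefactor is the substantive content of \cite[Lemma~3.3]{pzSL1} and cannot be read off from~\eqref{Mtail} alone. Second, the ``moreover'' part is more than a routine perturbation: beyond showing $Var_\w \bar{T}^{(n)}_\nu$ and $Var_\w T_\nu$ agree up to small errors with high probability, one must show that the event $\{Var_\w \bar{T}^{(n)}_\nu > x n^{2/s}\}$ is asymptotically contained in $\{M_1 > n^{(1-\e)/s}\}$, i.e.\ that the variance cannot be large while $M_1$ is small, and that the exceptional events have probability $o(n^{-1})$ uniformly in the relevant range. Your invocation of $b_n=\lfloor\log^2 n\rfloor$ and~\eqref{nutail} points in the right direction, but those estimates would need to be made quantitative. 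As a blueprint your sketch matches the strategy of~\cite{pzSL1}; as a proof it is incomplete precisely where the real work resides.
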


\end{section}

\begin{section}{Converting Time Limits to Space Limits}\label{gp}
In this section we develop a general method for transferring a quenched limit law for a subsequence of $T_n$ to a quenched limit law for a subsequence of $X_n$. We begin with some lemmas analyzing the a.s. asymptotic behavior of the quenched variance and mean of the hitting times.
\begin{lem} \label{Vmdublb}
Assume $s\leq 2$. Then for any $\d > 0$, 
\[
Q\left( Var_\w \bar{T}_{\nu_n}^{(n)} \notin \left(n^{2/s - \d}, n^{2/s+\d}\right) \right) \leq \frac{1}{P(\mathcal{R})} P\left( Var_\w \bar{T}_{\nu_n}^{(n)} \notin \left(n^{2/s - \d}, n^{2/s+\d}\right) \right) = o\left( n^{-\d s/4} \right)  \,.
\]
\end{lem}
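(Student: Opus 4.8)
The first inequality is immediate from $Q(\cdot)=P(\cdot\,|\,\mathcal{R})$ with $P(\mathcal{R})>0$, so the content is the estimate $P\bigl(Var_\w\bar{T}^{(n)}_{\nu_n}\notin(n^{2/s-\d},\,n^{2/s+\d})\bigr)=o(n^{-\d s/4})$, which I split into an upper and a lower tail. The structural starting point is the identity $Var_\w\bar{T}^{(n)}_{\nu_n}=\sum_{i=1}^n\s_{i,n,\w}^2$: since the reflection in $\bar{X}^{(n)}$ is updated only upon reaching a new ladder location, at the (a.s.\ finite) time $\bar{T}^{(n)}_{\nu_{i-1}}$ the walk sits at $\nu_{i-1}$ with its reflection frozen at $\nu_{i-1-b_n}$, so by the strong Markov property (for the Markov chain ``position together with running maximal ladder index reached'') the block crossing times $\bar{T}^{(n)}_{\nu_i}-\bar{T}^{(n)}_{\nu_{i-1}}$ are independent under $P_\w$, hence the quenched variance is additive over blocks. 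Moreover $\s_{i,n,\w}^2$ depends on $\w$ only through its restriction to $[\nu_{i-1-b_n},\nu_i)$, so $\{\s_{i,n,\w}^2\}_i$ is $(b_n+1)$-dependent under $Q$, and for $i\ge b_n+2$ all of the blocks involved have nonnegative index, whence $\s_{i,n,\w}^2$ has, under $P$, the distribution of $Var_\w\bar{T}^{(n)}_\nu$.

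I will use two tail inputs. First, there is a constant $C$, independent of $i$ and $n$, with $P(\s_{i,n,\w}^2>x)\le Cx^{-s/2}$ for all $i\ge1$ and $x>0$: for $i\ge b_n+2$, $\s_{i,n,\w}^2$ is distributed as $Var_\w\bar{T}^{(n)}_\nu\le Var_\w T_\nu$, so this is Theorem \ref{VETtail}; for the $O(\log^2 n)$ remaining indices --- where $P$ and $Q$ may disagree because the block just to the left of $0$ need not belong to $\mathcal{R}$ --- it follows from the formula \eqref{qvformula}, the estimate \eqref{Mtail}, and the backtracking bound $\P(T_{-k}<\infty)\le e^{-Ck}$, exactly as such tails are controlled in \cite{pzSL1}. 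Second, $\s_{i,n,\w}^2\ge M_i^2$, with $Q(M_i>x)=P(M_1>x)\sim C_3x^{-s}$ from \eqref{Mtail}. For the upper tail, truncate at $L:=n^{2/s+3\d/4}$. By the first input and a union bound, $P\bigl(\max_{1\le i\le n}\s_{i,n,\w}^2>L\bigr)\le CnL^{-s/2}=Cn^{-3\d s/8}=o(n^{-\d s/4})$. On the complementary event, $E_P[\s_{i,n,\w}^2\wedge L]=\int_0^L P(\s_{i,n,\w}^2>x)\,dx\le C'L^{1-s/2}$ for every $i$ (replace $L^{1-s/2}$ by $\log L$ when $s=2$), so $E_P\bigl[\sum_{i=1}^n(\s_{i,n,\w}^2\wedge L)\bigr]\le C'n^{2/s+(3\d/4)(1-s/2)}$; since $(3\d/4)(1-s/2)<\d(1-s/4)$, Markov's inequality gives $P\bigl(\sum_{i=1}^n(\s_{i,n,\w}^2\wedge L)>n^{2/s+\d}\bigr)=o(n^{-\d s/4})$. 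Adding the two pieces, $P\bigl(Var_\w\bar{T}^{(n)}_{\nu_n}>n^{2/s+\d}\bigr)=o(n^{-\d s/4})$.

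For the lower tail, use $Var_\w\bar{T}^{(n)}_{\nu_n}\ge\max_{1\le i\le n}\s_{i,n,\w}^2$ and choose $J\subseteq\{b_n+2,\dots,n\}$ whose consecutive elements differ by at least $b_n+2$; then $|J|\ge c'n/\log^2 n$ for $n$ large, and $\{\s_{i,n,\w}^2\}_{i\in J}$ is independent and identically distributed under both $P$ and $Q$. By the second input, $P(\s_{i,n,\w}^2\ge n^{2/s-\d})\ge P(M_1>n^{1/s-\d/2})\ge c\,n^{-1+\d s/2}$ for $n$ large, so
\[
P\bigl(Var_\w\bar{T}^{(n)}_{\nu_n}<n^{2/s-\d}\bigr)\le\prod_{i\in J}P\bigl(\s_{i,n,\w}^2<n^{2/s-\d}\bigr)\le\bigl(1-c\,n^{-1+\d s/2}\bigr)^{|J|}\le\exp\Bigl(-c\,c'\,\frac{n^{\d s/2}}{\log^2 n}\Bigr),
\]
which is $o(n^{-k})$ for every $k>0$. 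Combining the two tails proves the lemma. The main obstacle is the first tail input: showing that $P(\s_{i,n,\w}^2>x)\le Cx^{-s/2}$ holds \emph{uniformly in $i$} over the first $\approx\log^2 n$ blocks, past the left boundary where $P$ and $Q$ differ; everything else is a union bound, a truncated first moment, and the bookkeeping of exponents.
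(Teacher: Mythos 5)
Your proof is correct in outline and would reach the stated bound, but it takes a longer route than the paper on both tails, and leaves one point as an assertion.

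For the upper tail, the paper does not work at the block level at all: it simply uses $Var_\w \bar{T}^{(n)}_{\nu_n} \leq Var_\w T_{\nu_n}$ (reflection can only decrease the quenched variance) and then cites \cite[Lemma 5.11]{pzSL1}, which already gives $P(Var_\w T_{\nu_n} \geq n^{2/s+\d}) = o(n^{-\d s/4})$. Because $Var_\w T_{\nu_n}$ has no dependence on the reflection parameter $n$, the boundary issue you flag (the first $O(\log^2 n)$ blocks under $P$, where the reflection window crosses the origin and $P\neq Q$) never arises. Your truncation-plus-Markov argument is a genuine self-contained re-derivation of that lemma, and the exponent bookkeeping checks out, but it does rest on a uniform-in-$i$ tail bound $P(\s_{i,n,\w}^2>x)\leq Cx^{-s/2}$ for all $1\leq i\leq n$, including the boundary indices; you correctly identify this as the main obstacle but only sketch how to close it. That is the one genuine gap, and it is exactly the work that citing Lemma 5.11 avoids.

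For the lower tail, you pass to a $(b_n+2)$-separated subset $J$ to manufacture independence among the $\s_{i,n,\w}^2$, losing a $\log^2 n$ factor in $|J|$. This is unnecessary: the paper instead bounds $\s_{i,n,\w}^2 \geq M_i^2$ and observes that the $M_i$, $1\leq i\leq n$, are already i.i.d.\ under $P$ (each depends on a distinct nonnegative-index block, and $P$ is a product measure agreeing with $Q$ on $\s(\w_i:i\geq 0)$). This gives $P(Var_\w\bar{T}^{(n)}_{\nu_n}\leq n^{2/s-\d})\leq P(M_1^2\leq n^{2/s-\d})^n = o(e^{-n^{\d s/4}})$ directly, without any sparsification. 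Your worry about the $(b_n+1)$-dependence of the $\s_{i,n,\w}^2$ is legitimate in general, but is sidestepped by passing to $M_i$ before, not after, the independence step. Your bound still decays super-polynomially, so this is an inefficiency rather than an error.
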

\begin{proof}
The first inequality in the lemma is trivial since for any $A\in\mathcal{F}$ we have from the definition of $Q$ that $Q(A)= \frac{P(A\cap \mathcal{R})}{P(\mathcal{R})} \leq \frac{P(A)}{P(\mathcal{R})}$. Next, note that when $s\leq 2$ \cite[Lemma 5.11]{pzSL1} gives
\begin{equation}
P\left( Var_\w \bar{T}_{\nu_n}^{(n)} \geq n^{2/s+\d} \right) \leq P\left( Var_\w T_{\nu_n} \geq n^{2/s + \d} \right) = o( n^{-\d
s/4} )\,. \label{nrub}
\end{equation}
Also, since $Var_\w (\bar{T}_{\nu_{i}}^{(n)} - \bar{T}_{\nu_{i-1}}^{(n)}) \geq M_i^2$ we have 
\[
P\left( Var_\w \bar{T}_{\nu_n}^{(n)} \leq n^{2/s - \d} \right) \leq P\left( M_1^2 \leq n^{2/s-\d}\right)^n = \left( 1 - P\left( M_1 > n^{1/s-\d/2} \right) \right)^n = o\left( e^{-n^{\d s/4}} \right) \, ,
\] 
where the last equality is from \eqref{Mtail}.
\end{proof}
\begin{cor}\label{vkasym}
Assume $s\leq 2$. Then for any $\d>0$
\[
P\left( v_{k,\w} \notin \left( d_k^{2/s -\d}, d_k^{2/s + \d} \right) \right) = o\left( d_k^{-\d s /4} \right).
\]
Consequently, if $s<2$ we have $\sqrt{v_{k,\w}} = o(d_k)$, $P-a.s.$ 
\end{cor}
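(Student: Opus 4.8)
The plan is to transfer the estimate of Lemma \ref{Vmdublb} from $n$ to $d_k$, and from $Q$ back to $P$, keeping careful track of the harmless direction of the inequality $Q \leq P(\cdot)/P(\mathcal R)$ together with the fact that $P$ and $Q$ agree on $\s(\w_i : i \geq 0)$. First I would recall that by definition \eqref{dkvkdef} we have $v_{k,\w} = Var_\w(\bar T^{(d_k)}_{\nu_{n_k}} - \bar T^{(d_k)}_{\nu_{n_{k-1}}})$, which is the quenched variance of the crossing time of $d_k = n_k - n_{k-1}$ consecutive blocks, each reflection going back $b_{d_k} = \lfloor \log^2 d_k \rfloor$ blocks. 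Under $Q$ (or $P$ restricted to the relevant $\s$-algebra) this has exactly the same law as $Var_\w \bar T^{(d_k)}_{\nu_{d_k}}$, because the blocks between ladder locations are i.i.d.\ under $Q$ and the reflection structure depends only on the number of blocks crossed, not on which index one starts from. So Lemma \ref{Vmdublb} applied with $n = d_k$ gives directly
\[
Q\left( v_{k,\w} \notin \left( d_k^{2/s - \d}, d_k^{2/s+\d}\right)\right) = o\left(d_k^{-\d s/4}\right).
\]
To get the statement for $P$ rather than $Q$, I would note that $v_{k,\w}$ is a function of $\{\w_i : i \geq 0\}$ (the reflections only set certain $\w$ to $1$, and the ladder locations $\nu_i$ for $i \geq 0$ together with the crossing-time formulas \eqref{qvformula} involve only nonnegative indices after the reflections are in place); hence $P(v_{k,\w} \in \cdot) = Q(v_{k,\w} \in \cdot)$ and the same $o(d_k^{-\d s/4})$ bound holds under $P$. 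If one prefers to avoid the measurability discussion, the crude bound $P(A) \leq \frac{1}{P(\mathcal R)} P(A \cap \mathcal R) \cdot \frac{1}{1}$ is not available in this direction, so the cleanest route really is the $\s(\w_i : i\geq 0)$ agreement; alternatively one can simply reprove Lemma \ref{Vmdublb}'s two-sided estimate directly under $P$ using \eqref{Mtail} for the lower tail and \cite[Lemma 5.11]{pzSL1} for the upper tail, exactly as in that proof, since those inputs are insensitive to conditioning on $\mathcal R$ when the events depend only on nonnegative coordinates.

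For the consequence $\sqrt{v_{k,\w}} = o(d_k)$ when $s < 2$, I would combine the just-established bound with Borel--Cantelli. Choose $\d > 0$ small enough that $2/s + \d < 2$, which is possible precisely because $s < 2$ forces $2/s > 1$ but we need the exponent strictly below $2$; concretely any $\d < 2 - 2/s$ works. Since $n_k$ grows fast enough that $n_k / n_{k-1}^{1+\d'} \to \infty$ for some $\d' > 0$, we have $d_k = n_k - n_{k-1} \sim n_k$ and in particular $d_k$ grows at least geometrically, so $\sum_k d_k^{-\d s/4} < \infty$. Borel--Cantelli then gives that $P$-a.s., for all large $k$, $v_{k,\w} \leq d_k^{2/s+\d}$, whence $\sqrt{v_{k,\w}} \leq d_k^{1/s + \d/2} = o(d_k)$ because $1/s + \d/2 < 1$ by the choice of $\d$. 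This yields the claim.

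The main obstacle is the bookkeeping in the first step: verifying that $v_{k,\w}$, built from $d_k$ blocks starting at $\nu_{n_{k-1}}$ with reflections of depth $b_{d_k}$, genuinely has the same distribution under $Q$ (equivalently under $P$ on $\s(\w_i:i\geq0)$) as the quantity $Var_\w \bar T^{(d_k)}_{\nu_{d_k}}$ to which Lemma \ref{Vmdublb} literally applies. This is a stationarity/shift argument using the i.i.d.\ block structure under $Q$ and the observation that the reflection modification $\w_{\nu_{k - b_n}} = 1$ is defined relative to the current ladder index, so shifting the starting block by $n_{k-1}$ does not change the joint law of the relevant piece of environment; once that identification is made, everything else is a direct quotation of Lemma \ref{Vmdublb} plus a routine Borel--Cantelli. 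A secondary point to be careful about is that the reflection depth in Lemma \ref{Vmdublb} is $b_n$ with $n$ the number of blocks, matching $b_{d_k}$ here, so there is no mismatch between the reflection parameter and the number of blocks.
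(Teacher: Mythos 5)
Your proof is correct and follows essentially the same route as the paper: identify $v_{k,\omega}$ with $Var_\omega \bar{T}^{(d_k)}_{\nu_{d_k}}$ via shift invariance and the agreement of $P$ and $Q$ on $\sigma(\omega_i : i\geq 0)$ for large $k$, apply Lemma~\ref{Vmdublb} with $n=d_k$, then Borel--Cantelli using the fast growth of $d_k$. One small slip in the exposition: the reason a positive $\delta$ with $2/s+\delta<2$ exists is that $s>1$ (so $2-2/s>0$), not that $s<2$; the paper makes exactly this point by noting $2>\frac{2}{s}+\frac{s-1}{s}$ \emph{since} $s>1$, and you should flag $s>1$ explicitly rather than attribute the choice to $s<2$.
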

\begin{proof}
Recall from \eqref{dkvkdef} that by definition $v_{k,\w} = Var_\w \left( \bar{T}_{\nu_{n_k}}^{(d_k)} - \bar{T}_{\nu_{n_{k-1}}}^{(d_k)} \right)$. Also, note that the conditions on $n_k$ ensure that $n_k$ grows faster than exponentially and that $d_k\sim n_k$. Thus, for all $k$ large enough $v_{k,\w}$ only depends on the environment to the right of zero. Therefore for all $k$ large enough
\begin{align*}
P\left( v_{k,\w} \notin \left( d_k^{2/s -\d}, d_k^{2/s + \d} \right) \right) &= Q\left( Var_\w \left( \bar{T}_{\nu_{n_k}}^{(d_k)} - \bar{T}_{\nu_{n_{k-1}}}^{(d_k)} \right) \notin \left( d_k^{2/s -\d}, d_k^{2/s + \d} \right) \right) \\
&= Q\left( Var_\w  \bar{T}_{\nu_{d_k}}^{(d_k)} \notin \left( d_k^{2/s -\d}, d_k^{2/s + \d} \right) \right) = o\left( d_k^{-\d s /4} \right),
\end{align*}
where the last equality is from Lemma \ref{Vmdublb}. Now, for the second claim in the corollary, first note that $2 > \frac{2}{s} + \frac{s-1}{s}$ since $s>1$. Therefore, for any $\e>0$ and for all $k$ large enough we have 
\[
P\left( v_{k,\w} > \e d_k^2 \right) \leq P\left( v_{k,\w} >  d_k^{2/s+(s-1)/s} \right) = o\left( d_k^{-(s-1)/4} \right).
\]
This last term is summable since $d_k$ grows faster than exponentially. Thus the Borel-Cantelli Lemma gives that $v_{k,\w} = o(d_k^2)$, $P-a.s.$
\end{proof}

\begin{cor} \label{begVar}
Assume $s\leq 2$. Then 
\[
\lim_{k\ra\infty} \frac{Var_\w T_{\nu_{n_{k-1}}} }{ v_{k,\w}  } = 0, \quad P-a.s.
\]
\end{cor}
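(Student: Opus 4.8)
The plan is to compare $Var_\w T_{\nu_{n_{k-1}}}$ to $v_{k,\w}$ by sandwiching each between powers of $n_{k-1}$ and $d_k$ respectively, using the polynomial bounds already established. For the numerator, $Var_\w T_{\nu_{n_{k-1}}} \geq Var_\w \bar T^{(n_{k-1})}_{\nu_{n_{k-1}}}$, and more relevantly we want an \emph{upper} bound: observe that $Var_\w T_{\nu_{n_{k-1}}} \leq Var_\w T_{\nu_{n_{k-1}}}$, and we apply the tail estimate \eqref{nrub} (i.e. \cite[Lemma 5.11]{pzSL1}, valid for $s\le 2$) which gives $P(Var_\w T_{\nu_{n}} \geq n^{2/s+\d}) = o(n^{-\d s/4})$. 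Since $n_{k-1}$ grows faster than exponentially, $\sum_k P(Var_\w T_{\nu_{n_{k-1}}} \geq n_{k-1}^{2/s+\d}) < \infty$, so by Borel--Cantelli, $P$-a.s. $Var_\w T_{\nu_{n_{k-1}}} \leq n_{k-1}^{2/s+\d}$ for all large $k$.

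For the denominator, Corollary \ref{vkasym} gives $P$-a.s. $v_{k,\w} \geq d_k^{2/s-\d}$ for all large $k$ (again Borel--Cantelli applies since $\sum_k o(d_k^{-\d s/4}) < \infty$ as $d_k$ grows faster than exponentially). Since $d_k = n_k - n_{k-1} \sim n_k$, for large $k$ we have $d_k \geq n_k/2 \geq n_{k-1}$ (indeed much larger), so $v_{k,\w} \geq d_k^{2/s-\d} \geq n_{k-1}^{2/s-\d}$ eventually. Combining, $P$-a.s.
\[
\frac{Var_\w T_{\nu_{n_{k-1}}}}{v_{k,\w}} \leq \frac{n_{k-1}^{2/s+\d}}{d_k^{2/s-\d}}
\]
for all large $k$. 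Now use the hypothesis $n_k / n_{k-1}^{1+\d_0} \to \infty$ for some $\d_0 > 0$: since $d_k \sim n_k$, we have $d_k \geq n_{k-1}^{1+\d_0}$ for large $k$, hence $d_k^{2/s-\d} \geq n_{k-1}^{(1+\d_0)(2/s-\d)}$. It therefore suffices to choose $\d$ small enough that $(1+\d_0)(2/s-\d) > 2/s + \d$, which holds whenever $\d < \d_0 \cdot \frac{2/s}{2+\d_0}$; with such a $\d$ fixed, the ratio is bounded by $n_{k-1}^{-c}$ for some $c>0$ and tends to $0$.

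The only mild subtlety — and the one step worth stating carefully — is the measurability/conditioning bookkeeping: $Var_\w T_{\nu_{n_{k-1}}}$ depends on the environment to the left of $0$, so to pass from $Q$-statements to $P$-statements (and vice versa) one uses $Q(A) \leq P(A)/P(\mathcal R)$ as in Lemma \ref{Vmdublb}, together with the fact that for $k$ large $v_{k,\w}$ depends only on the environment to the right of $0$ (as noted in the proof of Corollary \ref{vkasym}), so the two Borel--Cantelli arguments are about genuine $P$-probabilities of the relevant events. I do not expect any real obstacle here; it is a routine consolidation of Lemma \ref{Vmdublb}, Corollary \ref{vkasym}, \eqref{nrub}, and the growth hypothesis on $n_k$, with the choice of $\d$ being the only thing requiring a moment's thought.
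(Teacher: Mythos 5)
Your argument is correct and follows the same route as the paper's own proof: both control $Var_\w T_{\nu_{n_{k-1}}}$ via \eqref{nrub}, control $v_{k,\w}$ from below via Corollary~\ref{vkasym}, and then use the growth hypothesis $n_k/n_{k-1}^{1+\d_0}\to\infty$ to pick $\d$ small enough that the resulting power of $n_{k-1}$ is negative. The only cosmetic difference is that you run two Borel--Cantelli arguments and then compare deterministically, whereas the paper bundles both tail bounds into a single summable estimate for $P(Var_\w T_{\nu_{n_{k-1}}} \geq \e v_{k,\w})$; the content is identical.
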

\begin{proof}
By the Borel-Cantelli Lemma it is enough to prove that for any $\e>0$ 
\[
\sum_{k=1}^{\infty} P\left( Var_\w T_{\nu_{n_{k-1}}} \geq \e v_{k,\w} \right) < \infty
\]
However, for any $\d>0$ we have
\begin{equation}
P\left( Var_\w T_{\nu_{n_{k-1}}} \geq \e v_{k,\w} \right) \leq P\left( Var_\w T_{\nu_{n_{k-1}}} \geq \e d_k^{2/s - \d} \right) + P\left( v_{k,\w} \leq d_k^{2/s-\d} \right). \label{Varmd}
\end{equation}
By Corollary \ref{vkasym} the last term in \eqref{Varmd} is summable for any $\d>0$. To show that the second to last term in \eqref{Varmd} is also summable first note that the conditions on the sequence $n_k$ give that there exists a $\d >0$ such that $\e d_k^{2/s - \d} \geq n_{k-1}^{2/s+\d}$ for all $k$ large enough. Thus, for some $\d>0$ and all $k$ large enough we have
\[
P\left( Var_\w T_{\nu_{n_{k-1}}} > \e d_k^{2/s + \d} \right) \leq P\left( Var_\w T_{\nu_{n_{k-1}}} > n_{k-1}^{2/s - \d} \right) = o(n_{k-1}^{-\d s/ 4}),
\]
where the last equality is from \eqref{nrub}. 
\end{proof}
\begin{lem} \label{ETaverage}
Assume $s\in(1,2)$. Then 
$\E T_1 <\infty$, and $P-a.s.$
\begin{equation}
\lim_{k\ra\infty} \frac{E_\w T_{n_k+\lceil x \sqrt{v_{k,\w}} \rceil } - E_\w T_{n_k}}{ \sqrt{v_{k,\w}}} 
= x \E T_1, \quad \forall x\in\R. \label{ETavg}
\end{equation}
\end{lem}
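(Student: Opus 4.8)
The plan is to prove the two assertions of Lemma~\ref{ETaverage} in turn: first that $\E T_1 < \infty$, and then the $P$-a.s.\ limit \eqref{ETavg}.

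For the first claim, recall from \eqref{XTLLN} that $v_P = 1/\E T_1$, and from Remark~2 after Assumption~\ref{techasm} that $v_P > 0$ precisely when $s > 1$; hence $\E T_1 < \infty$ whenever $s \in (1,2)$. Alternatively one can argue directly: $\E T_1 = \E(T_\nu)/\bar\nu$ up to the usual block decomposition, and $\E T_\nu = 1 + 2 E_P W_0 = 1/(1-E_P\rho)$-type computations together with $E_P\rho < 1$ (valid since $s>1$) give finiteness. Either route is a one-line reduction to facts already in the excerpt.

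For \eqref{ETavg}, the key point is that $E_\w T_{n} - E_\w T_{m} = \sum_{i=m+1}^{n}(E_\w^{\nu_{i-1}}T_{\nu_i} + \text{(contributions of partial blocks)})$, but more cleanly one works at the level of ladder locations: write $E_\w T_{\nu_j} - E_\w T_{\nu_i} = \sum_{\ell = i+1}^{j} E_\w^{\nu_{\ell-1}} T_{\nu_\ell}$, and the summands are i.i.d.\ under $Q$ with common mean $\E T_\nu / 1 = \bar\nu\, \E T_1$ (finite by the first claim). The number of blocks between $n_k$ and $n_k + \lceil x\sqrt{v_{k,\w}}\rceil$ is $\approx \lceil x\sqrt{v_{k,\w}}\rceil / \bar\nu$ by \eqref{nuLLN}, so the desired ratio is a partial-sum average of i.i.d.\ terms over a window whose length $\sqrt{v_{k,\w}} \to \infty$. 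The subtlety is that this is not a single random walk average but a sequence of block-averages at locations $n_k \to \infty$, and the window length $\sqrt{v_{k,\w}}$ is itself random; so I would (i) fix $x$ and $\e > 0$, (ii) use Corollary~\ref{vkasym} to sandwich $\sqrt{v_{k,\w}}$ between deterministic powers $d_k^{1/s \mp \d}$ with probability $1 - o(d_k^{-\d s/4})$, which is summable, (iii) apply a maximal inequality / Borel--Cantelli argument to the i.i.d.\ block sums to control $\max$ over all window lengths in that deterministic range, giving $P$-a.s.\ that $\big(E_\w T_{\nu_{n_k} + r} - E_\w T_{\nu_{n_k}}\big)/r \to \bar\nu\,\E T_1$ uniformly for $r$ in the range, (iv) translate from ladder-location indices back to ordinary indices $n_k + \lceil x\sqrt{v_{k,\w}}\rceil$ using \eqref{nuLLN} and the fact that a single block's crossing-time mean is $o(\sqrt{v_{k,\w}})$ (since $\sqrt{v_{k,\w}} \to \infty$ and tail estimates like \eqref{nutail}, \eqref{Mtail} bound individual block means), and finally (v) pass from the fixed-$x$ statement to ``$\forall x \in \R$'' by monotonicity of $n \mapsto E_\w T_n$ in $n$ and a rational-$x$ density argument. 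Handling the negative-$x$ case (window extending to the left of $n_k$) is symmetric, using that $n_{k-1} \ll n_k$ and $\sqrt{v_{k,\w}} \ll d_k \sim n_k$ by Corollary~\ref{vkasym} so the window stays well inside $[\nu_{n_{k-1}}, \nu_{n_k}]$ for $k$ large.

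The main obstacle I expect is step (iii): getting a $P$-a.s.\ (not merely in-probability) \emph{uniform-over-window-length} law of large numbers for the block sums at the diverging locations $n_k$. The cleanest way around this is to exploit that $n_k$ grows faster than exponentially (in fact $d_k$ grows faster than exponentially), so that after sandwiching $\sqrt{v_{k,\w}} \in (d_k^{1/s-\d}, d_k^{1/s+\d})$, the relevant bad events have probabilities that are $o(d_k^{-c})$ for some $c>0$ — summable in $k$ — and Borel--Cantelli finishes the job without needing any genuinely uniform LLN, only the Marcinkiewicz--Zygmund or Fuk--Nagaev tail bound for sums of i.i.d.\ terms with the finite mean $\bar\nu\,\E T_1$ (a moment or truncation estimate on $E_\w^{\nu_0} T_\nu$, whose tail decays like $x^{-s}$ with $s > 1$ by Theorem~\ref{VETtail}, suffices to make the deviation probabilities small enough).
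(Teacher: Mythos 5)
Your first paragraph is fine: $\E T_1<\infty$ follows from $v_P=1/\E T_1>0$ when $s>1$, which is already in the paper. The structure of the rest of your sketch — sandwich $\sqrt{v_{k,\w}}$ between deterministic powers of $d_k$ via Corollary~\ref{vkasym}, reduce to a maximal deviation estimate over a deterministic range of window lengths, and invoke Marcinkiewicz--Zygmund plus Borel--Cantelli — is the right skeleton and matches the paper's strategy.

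However, there is a genuine gap at the heart of step (iii). You assert that the summands $E_\w^{\nu_{\ell-1}} T_{\nu_\ell}$ are i.i.d.\ under $Q$. They are not: from \eqref{QET}, $E_\w^j T_{j+1}=1+2W_j$ and $W_j=\sum_{k\le j}\Pi_{k,j}$ depends on the \emph{entire} environment to the left of $j$, so consecutive (indeed, all) block means overlap in their dependence on $\w$. The paper states this obstruction explicitly just before \eqref{Mdef} and this is precisely why the modified walks with reflections were introduced. Without independence, the Doob--Kolmogorov maximal inequality you want to apply to partial sums is not available, and the whole tail estimate collapses. The paper's proof fixes this by passing from $T$ to $\tilde T^{(n)}$ (reflection after backtracking a distance $b_n$, rather than $b_n$ blocks), observing via \eqref{ett} that the replacement error $\E T_n-\E\tilde T_n^{(n)}$ decays superpolynomially, and then noting that $\kappa_i^{(n)}=E_\w^{i-1}\tilde T_i^{(n)}-\E\tilde T_1^{(n)}$ depends only on $(\w_{i-b_n},\dots,\w_{i-1})$; splitting the sum into $b_n$ interleaved subsequences then produces genuinely i.i.d.\ centered sequences to which Doob--Kolmogorov and Marcinkiewicz--Zygmund apply, at the mild cost of an extra factor $b_n^\gamma$ that is polylogarithmic and harmless. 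Your proposal omits this truncation-and-interleaving device entirely, and no analogous decoupling is supplied, so the argument as written does not go through.

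A secondary but related point: the paper works at the level of ordinary lattice sites $m\mapsto E_\w T_m$, not ladder locations, which sidesteps your step (iv) of translating between $\nu_j$-indices and ordinary indices. That translation is doable but is extra bookkeeping that becomes unnecessary once you reflect by a fixed \emph{distance} rather than a fixed number of \emph{blocks}; the paper's choice of $\tilde T^{(n)}$ over $\bar T^{(n)}$ in this lemma is precisely to enable the clean split $\sum_{i=1}^m\kappa_i^{(n)}=\sum_{i=1}^{b_n}\sum_j\kappa_{jb_n+i}^{(n)}$ at the integer level.
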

\begin{proof}
Now, since $\frac{E_\w T_{n_k+\lceil x \sqrt{v_{k,\w}} \rceil } - E_\w T_{n_k}}{ \sqrt{v_{k,\w}}}$ is monotone in $x$ it is enough to prove that for arbitrary $x\in\Q$ the limiting statement in \eqref{ETavg} holds. Obviously this is true when $x=0$ since both sides are zero. For the remainder of the proof we'll assume $x>0$. The proof for $x<0$ is essentially the same (recall that by Corollary \ref{vkasym} $v_{k,\w}= o(d_k)=o(n_k)$ when $s<2$). Note that for $x\geq 0$ then we can re-write $E_\w T_{n_k+\lceil x \sqrt{v_{k,\w}} \rceil } - E_\w T_{n_k}= E_\w^{n_k} T_{n_k + \lceil x \sqrt{v_{k,\w}} \rceil }$. By the Borel-Cantelli Lemma it is enough to show that for any $\e>0$,
\begin{equation}
\sum_{k=1}^\infty P\left( \left| E_\w^{n_k} T_{n_k+\lceil x \sqrt{v_{k,\w}} \rceil }  - \lceil x \sqrt{v_{k,\w}} \rceil  \E T_1   \right| \geq \e  \sqrt{v_{k,\w}}  \right) < \infty \, . \label{bc}
\end{equation}
However, for any $\d>0$ we have
\begin{align}
&P\left( \left| E_\w^{n_k} T_{n_k+\lceil x \sqrt{v_{k,\w}} \rceil } - \lceil x \sqrt{v_{k,\w}} \rceil  \E T_1   \right| \geq \e\sqrt{v_{k,\w}}  \right) \nonumber \\
&\:\: \leq P\left( \exists m \in \left[ \lceil x d_k^{1/s-\d} \rceil, \lceil x d_k^{1/s+\d} \rceil \right]: \left| E_\w^{n_k} T_{n_k+ m  } -  m \E T_1   \right| \geq \frac{\e m}{x} \right) + P\left( v_{k,\w} \notin \left[d_k^{2/s-2\d}, d_k^{2/s+2\d}\right] \right) \nonumber \\
&\:\: \leq P\left( \max_{m\leq \lceil x d_k^{1/s+\d} \rceil } \left| E_\w T_m - m\E T_1 \right| \geq \e  d_k^{1/s-\d}  \right) + o( d_k^{-\d s/2} ), \label{rv}
\end{align}
where the last inequality is due to Corollary \ref{vkasym} and the fact that $\{E_\w^{n_k} T_{n_k+m}\}_{m\in\Z}$ has the same distribution as $\{ E_\w T_m \}_{m\in \Z}$ since $P$ is a product measure. 
Thus, we only need to show that the first term in \eqref{rv} is summable in $k$ for some $\d>0$. For this, we need the following lemma whose proof we defer.
\begin{lem} \label{maxLD}
Assume $s\in(1,2]$. Then for any $0<\d' < \frac{s-1}{2s}$ we have that
\[
P\left( \max_{m\leq n} \left| E_\w T_m - m \E T_1 \right| \geq n^{1-\d'} \right) =o \left( n^{-(s-1)/2} \right)  
\]
\end{lem}
Assuming Lemma \ref{maxLD}, fix $0 < \d' < \frac{s-1}{2s}$ and then choose $0<\d < \frac{\d'}{s(2-\d')}$. We choose $\d$ and $\d'$ this way to ensure that $(1/s+\d)(1-\d') < 1/s-\d$. Therefore, for all $k$ large enough, $ \e  d_k^{1/s-\d}  > \left\lceil x d_k^{1/s+\d} \right\rceil^{1-\d'}$. Thus for all $k$ large enough we have
\begin{align*}
P\left( \max_{m\leq \lceil x d_k^{1/s+\d}\rceil } \left| E_\w T_m - m\E T_1 \right| \geq \e  d_k^{1/s-\d}  \right) &\leq P\left( \max_{m\leq \lceil x d_k^{1/s+\d}\rceil} \left| E_\w T_m - m \E T_1\right| \geq \left\lceil x d_k^{1/s+\d} \right\rceil^{1-\d'} \right)\\
&=   o\left( d_k^{-(1/s+\d)(s-1)/2} \right), \qquad \text{as } k\ra\infty.
\end{align*}
Since $s>1$ this last term is summable in $k$. 
\end{proof}
\begin{proof}[Proof of Lemma \ref{maxLD}:]
Before proceeding with the proof we need to introduce some notation for a slightly different type of reflection. Define $\tilde{X_t}^{(n)}$ to be the RWRE modified so that it cannot backtrack a distance of $b_n$ (the definition of $\bar{X_t}^{(n)}$ is similar except the walk was not allowed to backtrack $b_n$ blocks instead). That is, after the walk first reaches location $i$, we modify the environment by setting $\w_{i-b_n}=1$. Let $\tilde{T_x}^{(n)}$ be the corresponding hitting times of the walk $\tilde{X_t}^{(n)}$. Then
\begin{align}
P\left( \max_{m\leq n} \left| E_\w T_m - m \E T_1 \right| \geq n^{1-\d'} \right)
& \leq  P\left( E_\w T_n - E_\w \tilde{T}_n^{(n)}  \geq \frac{n^{1-\d'}}{3} \right)  
+  P\left( \E T_1 - \E \tilde{T}_1^{(n)}  \geq \frac{n^{-\d'}}{3} \right)\nonumber \\
&\quad +  P\left( \max_{m\leq n} \left| E_\w \tilde{T}_m^{(n)} - m \E \tilde{T}_1^{(n)} \right| \geq \frac{n^{1-\d'}}{3} \right) \nonumber \\
& \leq 3 n^{-1+\d'}  (\E T_n - \E \tilde{T}_n^{(n)})  
+ \mathbf{1}_{\E T_1 - \E \tilde{T}_1^{(n)} \geq n^{-\d'}/3} \nonumber \\
&\quad + P\left( \max_{m\leq n} \left| E_\w \tilde{T}_m^{(n)} - m \E \tilde{T}_1^{(n)} \right| \geq \frac{n^{1-\d'}}{3} \right) \label{aref} 
\end{align}
Now, from \eqref{QET} we get that $ E_\w T_1 - E_\w \tilde{T}_1^{(n)}  =  (1+2W_0)-(1+2W_{-b_n+1,0}) = 2 \Pi_{-b_n+1,0} W_{-b_n} $, and thus since $P$ is a product measure
\begin{equation}
\E T_n - \E \tilde{T}_n^{(n)} =  n E_P \left( E_\w T_1 - E_\w \tilde{T}_1^{(n)} \right) = \frac{ 2 n }{1-E_P \rho} (E_P \rho)^{b_n+1}. \label{ett}
\end{equation}
Since $E_P \rho < 1$ and $b_n \sim \log^2 n$ the above decreases faster than any power of $n$. Thus by \eqref{aref} we need only to show that $P\left( \max_{m\leq n} \left| E_\w \tilde{T}_m^{(n)} - m \E \tilde{T}_1^{(n)} \right| \geq \frac{n^{1-\d'}}{3} \right) = o(n^{-(s-1)/2})$. 
For ease of notation we define $\kappa_{i}^{(n)}:=E_\w^{i-1} \tilde{T}_i^{(n)} - \E \tilde{T}_1^{(n)}$. Thus, since $E_\w \tilde{T}_m^{(n)} - m \E \tilde{T}_1^{(n)} = \sum_{i=1}^m \kappa_{i}^{(n)} = \sum_{i=1}^{b_n} \sum_{j=0}^{\left\lfloor \frac{m-i}{b_n} \right\rfloor} \kappa_{jb_n+i}^{(n)} $ we have
\begin{align}
P\left( \max_{m\leq n} \left| E_\w \tilde{T}_m^{(n)} - m \E \tilde{T}_1^{(n)} \right| \geq \frac{n^{1-\d'}}{3} \right) 
&\leq P\left(  \max_{m\leq n} \sum_{i=1}^{b_n} \left| \sum_{j=0}^{\left\lfloor \frac{m-i}{b_n} \right\rfloor} \kappa_{jb_n+i}^{(n)} \right| \geq \frac{n^{1-\d'}}{3}  \right) \nonumber \\
&\leq \sum_{i=1}^{b_n} P\left( \max_{m\leq n}  \left| \sum_{j=0}^{\left\lfloor \frac{m-i}{b_n} \right\rfloor} \kappa_{jb_n+i}^{(n)} \right| \geq \frac{n^{1-\d'}}{3 b_n}  \right) \nonumber \\
&= \sum_{i=1}^{b_n} P\left( \max_{l\leq \left\lfloor \frac{n-i}{b_n} \right\rfloor}  \left| \sum_{j=0}^{l} \kappa_{jb_n+i}^{(n)} \right| \geq \frac{n^{1-\d'}}{3 b_n}  \right). \label{depldb}
\end{align}
Due to the reflections of the random walk, $\kappa_i^{(n)}$ depends only on the environment between $i-b_n$ and $i-1$. Thus, for each $i$ $\{\kappa_{jb_n+i}^{(n)}\}_{j=0}^{\infty}$ is a sequence of $i.i.d.$ random variables with zero mean, and so $\{\sum_{j=0}^l \kappa_{jb_n+i}^{(n)}\}_{l\geq 0}$ is a martingale. Now, let $\gamma \in (1,s)$. Then, by the Doob-Kolmogorov inequality, for any integer $N$ we have
\[
P\left( \max_{l\leq N}  \left| \sum_{j=0}^{l} \kappa_{jb_n+i}^{(n)} \right| \geq \frac{n^{1-\d'}}{3 b_n}  \right) \leq 3^\gamma b_n^\gamma n^{-\gamma+\gamma\d'} E_P\left| \sum_{j=0}^N \kappa_{jb_n+i}^{(n)}  \right|^\gamma.
\]
Now, since $\{\kappa_{jb_n+i}^{(n)}\}_{j=0}^{\infty}$ is a sequence of independent, zero-mean random variables, the Marcinkiewicz-Zygmund inequality \cite[Theorem 2 on p. 356]{ctMZ} gives that there exists a constant $B_\gamma<\infty$ depending only on $\gamma > 1$ such that 
\[
E_P\left| \sum_{j=0}^N \kappa_{jb_n+i}^{(n)}  \right|^\gamma \leq B_\gamma E_P \left| \sum_{j=0}^N \left(\kappa_{jb_n +i}^{(n)}\right)^2 \right|^{\gamma/2} \leq B_\gamma E_P \left( \sum_{j=0}^N \left| \kappa_{jb_n +i}^{(n)}\right|^\gamma \right) = B_\gamma (N+1) E_P|\kappa_1^{(n)}|^\gamma, 
\]
where the second inequality is because $\gamma < s \leq 2$ implies $\gamma/2 < 1$. 
Now, recall from \cite{kksStable} that $P(E_\w T_1 > x) \sim K x^{-s}$ for some $K>0$. Therefore, since $\gamma < s$ we have that $E_P |E_\w T_1|^\gamma < \infty$.  Thus, it's easy to see that $E_P|\kappa_1^{(n)}|^\gamma = E_P \left| E_\w \tilde{T}_1^{(n)} - \E \tilde{T}_1^{(n)} \right|^\gamma $ is uniformly bounded in $n$. So, there exists a constant $B_\gamma '$ depending on $\gamma \in (1,s)$ such that
\[
P\left( \max_{l\leq N}  \left| \sum_{j=0}^{l} \kappa_{jb_n+i}^{(n)} \right| \geq \frac{n^{1-\d'}}{3 b_n}  \right) \leq B'_\gamma b_n^\gamma n^{-\gamma+\gamma\d'} (N+1),
\]
and thus by \eqref{depldb}
\[
P\left( \max_{m\leq n} \left| E_\w \tilde{T}_m^{(n)} - m \E \tilde{T}_1^{(n)} \right| \geq \frac{n^{1-\d'}}{3} \right) \leq B'_\gamma b_n^{\gamma+1} n^{-\gamma+\gamma\d'} \left( \frac{n}{b_n}+1 \right) = \bigo\left( b_n^\gamma n^{1-\gamma + \gamma \d'} \right).
\]
Since by assumption we have $\d'<\frac{s-1}{2s}$, we may choose $\gamma < s$ arbitrarily close to $s$ so that $b_n^\gamma n^{-\gamma +1+ \gamma \d'} = o\left(n^{-(s-1)/2}\right)$.
\end{proof}
\begin{proof}[\textbf{Proof of Proposition \ref{generalprop}:}] \ \\
Recall the definition of $\a_m:= n_{k_m-1}$. To prove \eqref{Tlim} it is enough to prove that $\forall \e>0$
\begin{equation}
\lim_{m\ra\infty} P_\w\left( \left| \frac{T_{\nu_{\a_m}} - E_\w T_{\nu_{\a_m}} }{\sqrt{v_{k_m,\w}}} \right| \geq \e \right) = 0,  \quad P-a.s. \label{rmstart}
\end{equation}
and
\begin{equation}
\lim_{m\ra\infty} P_\w^{\nu_{\a_m}} \left( T_{x_m} \neq \bar{T}^{(d_{k_m})}_{x_m} \right) = 0, \quad\text{and}\quad \lim_{m\ra\infty} E_\w^{\nu_{\a_m}} \left( T_{x_m} - \bar{T}^{(d_{k_m})}_{x_m} \right) = 0, \quad P-a.s.  \label{addreflections}
\end{equation}
To prove \eqref{rmstart}, note that by Chebychev's inequality 
\[
P_\w\left( \left| \frac{T_{\nu_{\a_m}} - E_\w T_{\nu_{\a_m}} }{\sqrt{v_{k_m,\w}}} \right| \geq \e \right) \leq \frac{ Var_\w T_{\nu_{\a_m}} }{ \e^2 v_{k_m,\w} },
\]
which by Corollary \ref{begVar} tends to zero $P-a.s.$ as $m\ra\infty$. Secondly, to prove \eqref{addreflections}, note that since
\[
P_\w^{\nu_{\a_m}} \left( T_{x_m} \neq \bar{T}^{(d_{k_m})}_{x_m} \right) = P_\w^{\nu_{\a_m}} \left( T_{x_m} - \bar{T}^{(d_{k_m})}_{x_m} \geq 1 \right) \leq E_\w^{\nu_{\a_m}} \left( T_{x_m} - \bar{T}^{(d_{k_m})}_{x_m} \right),
\]
it is enough to prove only the second claim \eqref{addreflections}. However, since $x_m \leq 2 n_{k_m}$ for all $m$ large enough, it is enough to prove
\begin{equation}
\lim_{k\ra\infty}  E_\w \left( T_{2n_k} - \bar{T}^{(d_k)}_{2n_k} \right) = 0, \quad P-a.s. \label{rdz}
\end{equation}
To prove \eqref{rdz}, note that for any $\e>0$ that 
\begin{equation}
P\left(  E_\w \left( T_{2n_k} - \bar{T}^{(d_k)}_{2n_k} \right)  \geq \e \right) \leq \frac{\E \left( T_{2n_k} - \bar{T}^{(d_k)}_{2n_k} \right)}{\e}  \leq \frac{\E \left( T_{2n_k} - \tilde{T}^{(d_k)}_{2n_k} \right)}{\e}  = \frac{2 n_k \E \left( T_1 - \tilde{T}^{(d_k)}_1 \right)}{\e} . \label{rdz2}
\end{equation}
However, from \eqref{ett} we have that $\E \left( T_1 - \tilde{T}^{(d_k)}_1 \right) = \frac{2}{1-E_P \rho} (E_P \rho)^{b_{d_k}}$ which decreases faster than any power of $n_k$ (since $E_P \rho < 1$ and $d_k\sim n_k$), and thus the last term in \eqref{rdz2} is summable. Therefore, applying the Borel-Cantelli Lemma gives \eqref{rdz} which completes the proof of \eqref{Tlim}. 
Note, moreover, that the convergence in \eqref{Tlim} must be uniform in $y$ since $F$ is continuous.

To prove \eqref{Xlim}, for any $y\in \R$ let $ x_m(y):= \left \lceil n_{k_m}+ y \, v_P \sqrt{v_{k_m,\w}} \right \rceil$, and define 
$X_t^* := \max_{n\leq t} X_n$.
Then we have
\begin{align}
P_\w \left( \frac{X_{t_m}^* - n_{k_m}}{v_P \sqrt{v_{k_m,\w}} } < y \right) &= P_\w \left( X_{t_m}^* < x_m(y) \right) \nonumber = P_\w \left( T_{ x_m(y) } > t_m \right) \nonumber \\
&= P_\w \left( \frac{ T_{ x_m(y) } - E_\w T_{ x_m(y) } }{ \sqrt{v_{k_m,\w}} } > \frac{t_m - E_\w T_{  x_m(y)  } }{\sqrt{v_{k_m,\w}}} \right)  \label{timespace}
\end{align}
Now, recalling the definition of $t_m:= \left\lfloor E_\w X_{n_{k_m}} \right\rfloor$, by Lemma \ref{ETaverage} we have
\[
\lim_{m\ra\infty} \frac{t_m - E_\w T_{ x_m(y) } }{\sqrt{v_{k_m,\w}}} =
\lim_{m\ra\infty} \frac{ \left\lfloor E_\w T_{n_{k_m}} \right\rfloor - E_\w T_{  n_{k_m} + y v_P \sqrt{v_{k_m,\w}}  } }{\sqrt{v_{k_m,\w}}} = -y, \quad \forall y\in \R \quad P-a.s.,
\]
where we used the fact that $v_P \E T_1 = 1$ due to \eqref{XTLLN}. Also, by Corollary \ref{vkasym} we have $P-a.s.$ that $\sqrt{v_{k,\w}} = o(d_k) = o(n_k)$ since $s<2$, and therefore $x_m(y) \sim n_{k_m}$.
Thus since the convergence in \eqref{Tlim} is uniform in $y$, \eqref{timespace} gives that 
\begin{equation}
\lim_{m\ra\infty} P_\w \left( \frac{X_{t_m}^* - n_{k_m}}{v_P \sqrt{v_{k_m,\w}} } < y \right) = 1-F(-y), \quad \forall y\in \R \quad P-a.s. \label{Xtstarlim}
\end{equation}
Now, \eqref{XTLLN} gives that $t_m \sim (\E T_1)n_{k_m}$, $P-a.s.$ Therefore, an easy argument involving \cite[Lemma 4.6]{pzSL1} and \eqref{nutail} gives that $X_{t_m}^*-X_{t_m} = o( \log^2 t_m ) =o(\log^2 n_{k_m})$, $\P-a.s$. Also, Corollary \ref{vkasym} and the Borel-Cantelli Lemma give $P-a.s.$ that $v_{k,\w} \geq d_k^{2/s-\d} \sim n_k^{2/s - \d}$ for any $\d>0$ and all $k$ large enough. Therefore, $\P-a.s.$ we have that $\lim_{m\ra\infty} \frac{ X_{t_m}^* - X_{t_m} }{\sqrt{v_{k_m,\w}}} = 0$. Combining this with \eqref{Xtstarlim} completes the proof of \eqref{Xlim}. 
\end{proof}
\noindent\textbf{Remark:} For the last conclusion of Proposition \ref{generalprop} to hold it is crucial that $s>1$. The dual nature of $X_t^*$ and $T_n$ always allows the transfer of probabilities from time to space. However, if $s\leq 1$ then $\E T_1 = \infty$ and the averaging behavior of Lemma \ref{ETaverage} does not occur. 
\end{section}

\begin{section}{Quenched CLT Along a Subsequence} \label{qGauss}
For the remainder of the paper we will fix the sequence $n_k:= 2^{2^k}$ and let $d_k$ and $v_{k,\w}$ be defined accordingly as in \eqref{dkvkdef}. Note that this choice of $n_k$ satisfies the conditions in Proposition \ref{generalprop} for any $\d < 1$ since $n_k=n_{k-1}^2$. Our first goal in this section is to prove the following theorem, which when applied to Proposition \ref{generalprop} proves Theorem \ref{qCLT}.
\begin{thm}\label{Tngaussian}
Assume $s<2$. Then for any $\eta\in(0,1)$, $P-a.s.$ there exists a subsequence $n_{k_m}=n_{k_m}(\w, \eta)$ of $n_k=2^{2^k}$ such that for $\a_m,\beta_m$ and $\gamma_m$ defined by
\begin{equation}
\a_m:= n_{k_m-1}, \quad \b_m:= n_{k_m-1} + \left\lfloor \eta d_{k_m} \right\rfloor, \quad\text{and}\quad \gamma_m:= n_{k_m} \label{abgdef}
\end{equation}
and any sequence $x_m \in \left(\nu_{\beta_m} , \nu_{\gamma_m} \right]$  we have
\[
\lim_{m\ra\infty} P_\w^{\nu_{\a_m}} \left( \frac{\bar{T}^{(d_{k_m})}_{x_m} - E_\w \bar{T}^{(d_{k_m})}_{x_m} }{\sqrt{v_{k_m,\w}}} \leq x \right) = \Phi(x).  
\]
\end{thm}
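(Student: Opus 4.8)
The plan is to write $\bar T^{(d_{k_m})}_{x_m}$, centered by its $P_\w^{\nu_{\a_m}}$-mean, as a sum of block crossing times and apply the Lindeberg--Feller CLT along a carefully chosen subsequence. Write
\[
\bar T^{(d_{k_m})}_{\nu_{\c_m}}-\bar T^{(d_{k_m})}_{\nu_{\a_m}}=\sum_{i=\a_m+1}^{\c_m}\left(\bar T^{(d_{k_m})}_{\nu_i}-\bar T^{(d_{k_m})}_{\nu_{i-1}}\right).
\]
The structural observation is that, \emph{under $P_\w$}, these block crossing times are independent: by the strong Markov property applied successively at the hitting times $\bar T^{(d_{k_m})}_{\nu_i}$, the $i$-th increment is a functional of the path after time $\bar T^{(d_{k_m})}_{\nu_{i-1}}$, hence (for fixed $\w$) independent of the preceding increments. (The reflections in $\bar X^{(n)}$ play no role in this independence; they serve to make the quenched variances $\s_{i,n,\w}^2$ and means $\mu_{i,n,\w}$ depend on only $\bigo(\log^2 n)$ consecutive blocks of the environment, and to give the increments $P_\w$-exponential tails.) Thus $\{\bar T^{(d_{k_m})}_{\nu_i}-\bar T^{(d_{k_m})}_{\nu_{i-1}}-\mu_{i,d_{k_m},\w}\}_{i=\a_m+1}^{\c_m}$ is a triangular array of row-independent, mean-zero variables whose variances sum to $v_{k_m,\w}$ by \eqref{dkvkdef}.

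I would require the subsequence $\{k_m\}$ to satisfy, on a set of environments of full $P$-measure, both
\[
\text{(i)}\quad \max_{\a_m<i\le\c_m}\s_{i,d_{k_m},\w}^2=o(v_{k_m,\w}) \qquad\text{and}\qquad \text{(ii)}\quad \sum_{i=\b_m+1}^{\c_m}\s_{i,d_{k_m},\w}^2=o(v_{k_m,\w}),
\]
i.e.\ no single block crossing dominates, and essentially all of $v_{k_m,\w}$ is carried by the first $\lfloor\eta d_{k_m}\rfloor$ blocks. Given (i)--(ii) the CLT follows quickly: for $x_m\in(\nu_{\b_m},\nu_{\c_m}]$ write $\bar T^{(d_{k_m})}_{x_m}=\bar T^{(d_{k_m})}_{\nu_{\c_m}}-\bigl(\bar T^{(d_{k_m})}_{\nu_{\c_m}}-\bar T^{(d_{k_m})}_{x_m}\bigr)$; by the strong Markov property at $\bar T^{(d_{k_m})}_{x_m}$ the centered remainder has $P_\w^{\nu_{\a_m}}$-variance at most $\sum_{i=\b_m+1}^{\c_m}\s_{i,d_{k_m},\w}^2$ plus the contribution of the single block containing $x_m$, hence $o(v_{k_m,\w})$ by (i)--(ii), so by Chebyshev it is negligible after dividing by $\sqrt{v_{k_m,\w}}$, uniformly in $x_m$. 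It therefore suffices to prove the Gaussian limit for the full sum; there, (i) supplies the Lindeberg--Feller negligibility condition, and the Lindeberg (or a Lyapunov) condition is obtained from (i) together with the standard quenched moment bounds of \cite{pzSL1} estimating $E_\w\bigl|\bar T^{(d_{k_m})}_{\nu_i}-\bar T^{(d_{k_m})}_{\nu_{i-1}}-\mu_{i,d_{k_m},\w}\bigr|^{2+\d}$ by a power of $\s_{i,d_{k_m},\w}^2$ (times factors polynomial in block sizes). The leftover bookkeeping --- the $E_\w$ versus $E_\w^{\nu_{\a_m}}$ centering, the starting point $\nu_{\a_m}$ versus $0$ (and the fact that for large $m$ the relevant part of the environment lies to the right of $0$), and the single partial block --- is handled by Corollaries \ref{vkasym} and \ref{begVar}.

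The main obstacle is producing such a subsequence $P$-a.s.\ Conditions (i) and (ii) pull against each other: (i) forbids a dominant block, which --- because the block variances have index-$s/2<1$ tails (Theorem \ref{VETtail}) --- forces a ``bulk'' of $\asymp r$ comparably large blocks for some $r\to\infty$, and (ii) demands that this bulk, together with essentially all of its lower-order companions, be relocated into the first $\eta$-fraction of the blocks. I would make this quantitative by defining, for each $k$, a good event $G_k\in\mathcal F$ (of the form: there is a threshold $h_k$ with every block $\a_k<i\le\c_k$ of variance $\le h_k$, at least $r_k$ of the blocks $\a_k<i\le\b_k$ of variance $\asymp h_k$, and all blocks $\a_k<i\le\c_k$ of variance $\ge h_k/\omega_k$ lying in $(\a_k,\b_k]$), with $r_k\to\infty$ and $\omega_k\to\infty$ chosen to grow extremely slowly. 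Using Theorem \ref{VETtail} --- the tail $Q(\s^2_{\cdot}>x)\sim Cx^{-s/2}$ and the fact that, thanks to the reflections, the $\s_{i,d_k,\w}^2$ over $\a_k<i\le\c_k$ are to leading order i.i.d.\ under $Q$ with the law of $Var_\w\bar T^{(d_k)}_\nu$ --- one estimates $Q(G_k)$ from below; the dominant cost is a factor $\eta^{\bigo(r_k\omega_k^{s/2})}$ from forcing the large blocks into the initial fraction, and $r_k,\omega_k$ are calibrated so that $Q(G_k)\ge e^{-o(\log k)}$, whence $\sum_k Q(G_k)=\infty$. Finally, since $n_k=2^{2^k}$ grows doubly exponentially, the events $G_k$ depend on portions of the environment whose pairwise overlaps have size $\bigo((\log n_{k+1})^2)=o(n_k)$, so after a routine decoupling the second Borel--Cantelli lemma gives that infinitely many $G_k$ occur $Q$-a.s., hence $P$-a.s.\ (recall $P=Q$ on $\s(\w_i:i\ge0)$ and the relevant blocks are eventually to the right of $0$); the subsequence of those $k$ is the desired $\{k_m\}$. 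Calibrating the thresholds in $G_k$ so that (i) and (ii) hold on $G_k$ while keeping $\sum_k Q(G_k)=\infty$ is the delicate heart of the argument.
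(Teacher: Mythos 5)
Your proposal follows essentially the same route as the paper: a good event forcing, among the first $\eta$-fraction of the blocks in $(n_{k-1},n_k]$, many blocks of comparable (maximal) variance so that no single block dominates $v_{k,\w}$ and the tail $(\beta_m,\gamma_m]$ carries a negligible share; a lower bound on the $Q$-probability of this event decaying slower than $1/k$; the second Borel--Cantelli lemma along an independent sub-subsequence; and a Lindeberg--Feller verification using the $P_\w$-independence of block crossings, the fact that $v_{k_m,\w}/\max_i\sigma_{i,d_{k_m},\w}^2\to\infty$, and the exponential quenched moment bounds on large-$M_i$ blocks from \cite{pzSL1}. The paper's event $\mathcal{S}_{\eta,d_k,a_k}\cap U_{\eta,d_k}$ with $a_k=\lfloor\log\log k\rfloor\vee1$ is exactly your $G_k$ with a fixed threshold $h_k\asymp d_k^{2/s}$ and $r_k=2a_k$, and it uses exact independence of the events along $\{2k\}$ (since $n_{k-1}-b_{d_k}>n_{k-2}$) instead of a decoupling argument; otherwise the plan coincides.
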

The proof of Theorem \ref{qGauss} is similar to the proof of \cite[Theorem 5.10]{pzSL1}. The key is to find a random subsequence where none of the variances $\s_{i,d_{k_m},\w}^2$ with $i\in (n_{k_m-1},n_{k_m}]$ is larger than a fraction of $v_{k_m,\w}$. 
To this end, let $\#(I)$ denote the cardinality of the set $I$, and for any $\eta\in(0,1)$ and any positive integer $a < n/2$
define the events
\[
\mathcal{S}_{\eta,n,a} := \bigcup_{ \tiny{\begin{array}{c} I\subset [1,\eta n] \\ \#(I) = 2a \end{array}}} \!\! \left( \bigcap_{i\in I} \left\{  \mu_{i,n,\w}^2 \in[n^{2/s},2n^{2/s}) \right\} \bigcap_{j\in[1,\eta n]\backslash I} \left\{ \mu_{j,n,\w}^2 < n^{2/s} \right\} \right) 
\,.
\]
and
\[
U_{\eta,n} := \left\{ \sum_{i\in(\eta n, n]} \s_{i,n,\w}^2 < 2 n^{2/s}   \right\} .
\]
On the event $\mathcal{S}_{\eta, n, a}$, $2a$ of the first $\eta
n$ crossings times from $\nu_{i-1}$ to $\nu_i$ have roughly the
same size variance and the rest are
all smaller. Define
\begin{equation}
a_k:=\lfloor\log\log k\rfloor \vee 1. \label{akdef}
\end{equation} 
Then, we have the following Lemma:
\begin{lem}\label{smbklemma}
Assume $s<2$. Then for any $\eta \in (0,1)$, we have $Q\left( \mathcal{S}_{\eta,d_k,a_k} \cap U_{\eta,d_k} \right)\geq \frac{1}{k}$ for all $k$ large enough.
\end{lem}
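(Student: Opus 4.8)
The plan is to bound the probability of the event $\mathcal{S}_{\eta,d_k,a_k} \cap U_{\eta,d_k}$ from below by a quantity that decays slower than $1/k$, by choosing the scale parameter $n^{2/s}$ in the definition of $\mathcal{S}_{\eta,n,a}$ so that a single block has variance roughly $n^{2/s}$ with polynomially small (in $n$) probability. First I would record, from \eqref{Mtail} together with Theorem~\ref{VETtail}, the key tail asymptotics: for fixed $\e>0$, $Q\bigl(\mu_{1,n,\w}^2 \in [n^{2/s},2n^{2/s})\bigr) \asymp c/n$ for an appropriate constant $c>0$ (using that $\s_{1,n,\w}^2 \sim (E_\w \bar T_\nu^{(n)})^2$ on the large-block event and that $M_1 > n^{(1-\e)/s}$ forces a large block; the contribution to $\mathcal{S}$ from blocks that are large but have $M_1 \le n^{(1-\e)/s}$ is negligible), and also $Q(\mu_{1,n,\w}^2 \ge n^{2/s}) \asymp c'/n$. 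Since the blocks are i.i.d.\ under $Q$, the probability of the intersection over $I$ of the event that $\mu_{i,d_k,\w}^2 \in [d_k^{2/s}, 2 d_k^{2/s})$ for $i \in I$ and $< d_k^{2/s}$ for $i \notin I$ (with $\#I = 2a_k$, $I \subset [1,\eta d_k]$) is, by independence and a binomial count,
\[
\binom{\lfloor \eta d_k\rfloor}{2a_k} \left(\frac{c}{d_k}\right)^{2a_k} \left(1 - \frac{c'}{d_k}\right)^{\lfloor \eta d_k\rfloor - 2a_k}.
\]

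Next I would estimate this expression. The binomial coefficient is $\sim (\eta d_k)^{2a_k}/(2a_k)!$, so the first two factors give roughly $(\eta c)^{2a_k}/(2a_k)!$, and the last factor converges to $e^{-\eta c'}$ as $k\to\infty$ (since $a_k = o(d_k)$). Because $a_k = \lfloor \log\log k\rfloor \vee 1$ grows so slowly, $(2a_k)! = e^{O(a_k \log a_k)} = e^{O(\log\log k \cdot \log\log\log k)}$, which is $o(k^\epsilon)$ for every $\epsilon>0$; similarly $(\eta c)^{2a_k}$ is at most $e^{O(\log\log k)} = o(k^\epsilon)$. Hence the whole expression is $\gtrsim k^{-\epsilon}$ for every $\epsilon>0$, in particular eventually larger than, say, $2/k$. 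This handles $\mathcal{S}_{\eta,d_k,a_k}$ alone. To incorporate $U_{\eta,d_k}$, I would argue that $U_{\eta,d_k}$ is a high-probability event depending only on blocks indexed in $(\eta d_k, d_k]$, hence independent of $\mathcal{S}_{\eta,d_k,a_k}$ (which depends only on blocks in $[1,\eta d_k]$): by Theorem~\ref{VETtail} or \cite[Lemma 5.11]{pzSL1}, $\sum_{i\in(\eta d_k, d_k]} \s_{i,d_k,\w}^2$ has mean of order $d_k^{2/s}$ times a slowly varying correction, and a truncation/Markov argument shows $Q(U_{\eta,d_k}^c) \to 0$ (or is at least bounded away from $1$); multiplying the two lower bounds gives $Q(\mathcal{S}_{\eta,d_k,a_k}\cap U_{\eta,d_k}) \ge (2/k)\cdot(1-o(1)) \ge 1/k$ for $k$ large.

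The main obstacle I anticipate is making the tail estimate $Q(\mu_{1,d_k,\w}^2 \in [d_k^{2/s}, 2d_k^{2/s})) \asymp c/d_k$ precise and uniform: Theorem~\ref{VETtail} gives the joint tail of $(Var_\w \bar T_\nu^{(n)}, M_1)$ only on the event $M_1 > n^{(1-\e)/s}$ and only as an asymptotic equivalence, so I must control both the error from blocks with $M_1 \le d_k^{(1-\e)/s}$ but still moderately large $\mu^2$, and the fact that $n$ and the truncation level $b_n$ both vary with $k$ through $n=d_k$. The cleanest route is probably to prove a two-sided bound $c_1/d_k \le Q(\mu_{1,d_k,\w}^2 \ge d_k^{2/s}) \le c_2/d_k$ and a matching bound for the interval $[d_k^{2/s},2d_k^{2/s})$ directly from regular variation, then feed these crude constants into the binomial estimate — since we only need a lower bound that beats $1/k$ and have the enormous slack coming from $a_k$ growing like $\log\log k$, constants and lower-order errors are harmless. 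A secondary technical point is the exact independence of $U_{\eta,d_k}$ and $\mathcal{S}_{\eta,d_k,a_k}$ under $Q$: this follows because under $Q$ the blocks between consecutive ladder locations are genuinely i.i.d., so events measurable with respect to disjoint index sets of blocks are independent.
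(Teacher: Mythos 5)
Your overall strategy — a binomial count over the $2a_k$ ``large'' indices, combined with the observation that $(2a_k)! = e^{O(\log\log k\cdot\log\log\log k)}$ grows subpolynomially in $k$ so the resulting lower bound eventually beats $1/k$ — is indeed the mechanism the paper uses (via the bound \eqref{calSlb} imported from \cite{pzSL1}). However, there is a genuine gap: you treat the family $\{\mu_{i,d_k,\w}^2\}_{i\leq \eta d_k}$ as i.i.d.\ under $Q$, and this is false. Under $Q$ the ladder \emph{blocks} are i.i.d., but $\mu_{i,n,\w} = E_\w^{\nu_{i-1}}\bar{T}^{(n)}_{\nu_i}$ concerns the \emph{reflected} walk, which is allowed to backtrack $b_n=\lfloor\log^2 n\rfloor$ blocks; hence $\mu_{i,n,\w}$ is a function of the $b_n$ blocks indexed $i-b_n+1,\dots,i$, and $\mu_{i,n,\w}$ and $\mu_{j,n,\w}$ are dependent whenever $|i-j|<b_n$. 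Your product formula $\binom{\lfloor\eta d_k\rfloor}{2a_k}(c/d_k)^{2a_k}(1-c'/d_k)^{\lfloor\eta d_k\rfloor-2a_k}$ is therefore not a valid expression for the probability of any particular configuration, nor for their union. This is precisely why the paper's \eqref{calSlb} carries the extra factor $\bigl(1 - \tfrac{(2a-1)(1+4b_n)}{\eta n}\bigr)^{2a}$ together with error terms: the sum over $I$ must be restricted to index sets whose elements are mutually separated by at least $O(b_n)$ so that approximate independence holds, and the cost of this restriction plus the approximation error must be tracked.

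The same issue undercuts your claim that $U_{\eta,d_k}$ is exactly independent of $\mathcal{S}_{\eta,d_k,a_k}$. The event $U_{\eta,n}$ involves $\s_{i,n,\w}^2$ for $i\in(\eta n,n]$, and each such $\s_{i,n,\w}^2$ depends on blocks $i-b_n+1,\dots,i$; those for $i$ near $\eta n$ thus overlap the blocks used by $\mathcal{S}_{\eta,n,a}$. The paper sidesteps this by introducing $\tilde{U}_{\eta,n}$ (summing only over $i\in(\eta n+b_n,\,n]$), which \emph{is} independent of $\mathcal{S}_{\eta,n,a}$, and then bounding $Q(\mathcal{S}\cap U)\geq Q(\mathcal{S})Q(\tilde{U}) - b_n\,Q\bigl(Var_\w\bar{T}^{(n)}_\nu > n^{2/s}/b_n\bigr)$, with the correction controlled via Theorem~\ref{VETtail}. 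Your proposal needs an analogous buffer or a quantitative near-independence estimate; as written, both the binomial count and the factorization $Q(\mathcal{S}\cap U)\approx Q(\mathcal{S})Q(U)$ rest on an independence assumption that does not hold. (A minor further note: $Q(U_{\eta,d_k}^c)\to 0$ is not true — by Theorem~\ref{Varstable} the normalized sum has a nondegenerate stable limit — so only your parenthetical fallback, that $Q(U_{\eta,d_k})$ is bounded away from $0$, is available, and it must be established through the stable-law limit rather than a Markov bound.)
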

\begin{proof}
First we reduce the problem to getting a lower bound on $Q(\mathcal{S}_{\eta,d_k,a_k} )$. Define
\[
\tilde{U}_{\eta,n} := \left\{ \sum_{i\in(\eta n + b_n, n]} \s_{i,n,\w}^2 < n^{2/s} \right\}. 
\]
Note that $\mathcal{S}_{\eta,n,a}$ and $\tilde{U}_{\eta,n}$ are independent events since $\tilde{U}_{\eta,n}$ only depends on the environment to the right of the $\nu_{\lceil \eta n \rceil}$. Thus,
\begin{align*}
Q\left( \mathcal{S}_{\eta,n,a} \cap U_{\eta, n}\right) &\geq Q\left( \mathcal{S}_{\eta,n,a} \cap \tilde{U}_{\eta,n} \right) - Q\left( \sum_{i\in(\eta n, \eta n +b_n]} \s_{i,n,\w}^2 > n^{2/s} \right) \\
&\geq Q\left( \mathcal{S}_{\eta,n,a} \right) Q\left( \tilde{U}_{\eta,n} \right) - b_n Q\left( Var_\w \bar{T}^{(n)}_\nu > \frac{n^{2/s}}{b_n} \right).
\end{align*}
Now, Theorem \ref{Varstable} gives that $Q\left( \tilde{U}_{\eta,n} \right) \geq 
Q\left( Var_\w T_{\nu_n} < n^{2/s} \right) = L_{\frac{s}{2},b}(1) + o(1)$, and Theorem \ref{VETtail} gives that $b_n Q\left( Var_\w \bar{T}^{(n)}_\nu > \frac{n^{2/s}}{b_n} \right) \sim K_\infty b_n^{1+s} n^{-1}$. Thus,
\[
Q\left( \mathcal{S}_{\eta,d_k,a_k} \cap U_{\eta, d_k}\right) \geq Q(\mathcal{S}_{\eta,d_k,a_k})( L_{\frac{s}{2},b}(1) + o(1) ) - \bigo(b_{d_k}^{1+s} d_k^{-1}), \quad \text{as }k\ra\infty,
\]
and so to prove the lemma it is enough to show that $\lim_{k\ra\infty} k \, Q(\mathcal{S}_{\eta,d_k,a_k}) = \infty$.  
A lower bound for $Q( \mathcal{S}_{\eta,n,a})$ was derived in \cite[preceeding Lemma 5.7]{pzSL1}. 
A similar argument gives that for any $\e<\frac{1}{3}$ there exists a constant $C_\e>0$ such that
\begin{align}
Q\left( \mathcal{S}_{\eta,n,a} \right) & \geq  \frac{(\eta
C_\e)^{2a}}{(2a)!} \left( 1 - \frac{(2a-1)(1+4b_n)}{\eta n}
\right)^{2a} \left( Q\left( \sum_{i=1}^n \left( E_\w^{\nu_{i-1}} T_{\nu_i} \right)^2 < n^{2/s} \right) - a \, o(n^{-1+2\e}) \right)  \nonumber \\
&\qquad - \frac{(\eta n)^{2a}}{(2a)!} a \, o\left(
e^{-n^{\e/(6s)}} \right) \label{calSlb}
\,,\end{align}
where asymptotics of the form $o(\cdot\,)$ in \eqref{calSlb} are uniform in $\eta$ and $a$ as $n\ra\infty$. The proof of \eqref{calSlb} is exactly the same as in \cite{pzSL1} with the exception that the lower bound for $Q\left( \bigcap_{j\in[1,n]} \left\{ \mu_{j,n,\w}^2 < n^{2/s} \right\} \right)$  in \cite[(70)]{pzSL1} is $Q\left( \sum_{i=1}^n \left( E_\w^{\nu_{i-1}} T_{\nu_{i}} \right)^2 < n^{2/s} \right)$ instead of $Q\left( E_\w T_{\nu_n} < n^{1/s} \right)$. 
Then, replacing $n$ and $a$ in \eqref{calSlb} by $d_k$ and
$a_k$ respectively, we have for $\e<\frac{1}{3}$ that 
\begin{align}
&Q\left( \mathcal{S}_{\eta,d_k,a_k} \right) \nonumber \\
&\qquad \geq  \frac{(\eta
C_\e)^{2a_k}}{(2a_k)!} \left( 1 - \frac{(2a_k-1)(1+4b_{d_k})}{\eta
d_k} \right)^{2a_k} \left( Q\left( \sum_{i=1}^{d_k} \left( E_\w^{\nu_{i-1}} T_{\nu_i} \right)^2  < d_k^{2/s}
\right) - a_k
o(d_k^{-1+2\e}) \right)  \nonumber \\
&\qquad\qquad - \frac{(\eta d_k)^{2a_k}}{(2a_k)!} a_k o\left(
e^{-d_k^{\e/(6s)}} \right) \nonumber \\
&\qquad =  \frac{(\eta C_\e)^{2a_k}}{(2a_k)!} \left( 1+ o(1)\right)
\left( L_{\frac{s}{2},b}(1) - o(1) \right) - o\left( \frac{1}{k} \right). \label{sublb}
\end{align}
The last equality is a result of Theorem \ref{Varstable} and the definitions of $a_k$
and $d_k$ in \eqref{akdef} and \eqref{dkvkdef}. 
Also, since $a_k\sim \log\log k$ we have that $\lim_{k\ra\infty} k \frac{C^{2a_k}}{(2a_k)!} = \infty $ for any constant $C>0$. Therefore, \eqref{sublb} implies that $\lim_{k\ra\infty} k\, Q\left( \mathcal{S}_{\eta,d_k,a_k} \right) = \infty$. 
\end{proof}
\begin{cor}\label{smallblocks2}
Assume $s<2$. Then for any $\eta\in(0,1)$,
$P$-a.s. there exists a random subsequence $n_{k_m}=n_{k_m}(\w,\eta)$
of $n_k=2^{2^k}$ such that for the sequences $\a_m,\b_m,$ and $\gamma_m$ defined as in \eqref{abgdef} we have that for all $m$
\begin{equation}
\max_{ i\in(\a_m, \b_m] } \mu_{i,d_{k_m},\w}^2  \leq
2d_{k_m}^{2/s} \leq \frac{1}{a_{k_m}} \sum_{i=\a_m+1}^{\b_m}
\mu_{i,d_{k_m},\w}^2, \quad\text{and}\quad  \sum_{i=\b_m + 1}^{\gamma_m} \s_{i,d_{k_m},\w}^2 < 2 d_{k_m}^{2/s} .\label{smbk}
\end{equation}
\end{cor}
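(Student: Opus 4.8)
The plan is to derive Corollary~\ref{smallblocks2} from Lemma~\ref{smbklemma} by a Borel--Cantelli argument, exploiting the rapid growth of $n_k=2^{2^k}$ to obtain independence along a sparse sub-family of events. Fix $\eta\in(0,1)$, and for each $k$ let $A_k=A_k(\w)$ be the event that \eqref{smbk} holds with $(\a_m,\b_m,\gamma_m)$ replaced by $\bigl(n_{k-1},\,n_{k-1}+\lfloor\eta d_k\rfloor,\,n_k\bigr)$ and $(a_{k_m},d_{k_m})$ replaced by $(a_k,d_k)$. Then the corollary is exactly the statement that $P$-a.s.\ $A_k$ occurs for infinitely many $k$: given that, one lets $\{k_m\}$ be the (random) increasing enumeration of $\{k:A_k\text{ occurs}\}$ and defines $\a_m,\b_m,\gamma_m$ by \eqref{abgdef}.

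The first step is to show $Q(A_k)\geq 1/k$ for all large $k$. On $\mathcal{S}_{\eta,d_k,a_k}$, exactly $2a_k$ of the blocks $i\in[1,\lfloor\eta d_k\rfloor]$ satisfy $\mu_{i,d_k,\w}^2\in[d_k^{2/s},2d_k^{2/s})$ and the remaining ones satisfy $\mu_{i,d_k,\w}^2<d_k^{2/s}$, so there $\max_{i\le\lfloor\eta d_k\rfloor}\mu_{i,d_k,\w}^2<2d_k^{2/s}$ and $a_k^{-1}\sum_{i\le\lfloor\eta d_k\rfloor}\mu_{i,d_k,\w}^2\geq 2d_k^{2/s}$; together with the defining inequality of $U_{\eta,d_k}$ this shows that $\mathcal{S}_{\eta,d_k,a_k}\cap U_{\eta,d_k}$ is contained in the event obtained from $A_k$ by decreasing every block index by $n_{k-1}$. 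Since the reflections in $\bar{T}^{(d_k)}$ confine the relevant walks to a window of $b_{d_k}+1$ consecutive blocks, each $\mu_{i,d_k,\w}$ and $\s_{i,d_k,\w}^2$ is a fixed function of the contents of blocks $i-b_{d_k},\dots,i$; and under $Q$ the $\Z$-indexed block sequence is i.i.d., so $Q$ is invariant under block-index shifts. Hence $Q(A_k)\geq Q\bigl(\mathcal{S}_{\eta,d_k,a_k}\cap U_{\eta,d_k}\bigr)\geq 1/k$ by Lemma~\ref{smbklemma}.

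The second step is the independence. By the localization just noted, $A_k$ is measurable with respect to the blocks with indices in $W_k:=\bigl[\,n_{k-1}-b_{d_k},\,n_k\,\bigr]$. Since $n_{k+1}=n_k^2$ while $b_{d_{k+2}}=O(\log^2 n_k)$, for all large $k$ we have $n_{k+1}-b_{d_{k+2}}>n_k$, so $W_k\cap W_{k+2}=\emptyset$; as the blocks are i.i.d.\ under $Q$, the events $\{A_{2j}\}_j$ (for $j$ large) are mutually independent. Because $\sum_j Q(A_{2j})\geq\sum_j\tfrac1{2j}=\infty$, the second Borel--Cantelli lemma gives that $Q$-a.s.\ $A_{2j}$ occurs for infinitely many $j$. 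Finally, for all large $k$ every site relevant to $A_k$ is nonnegative (since $n_{k-1}-b_{d_k}\ge 1$ for large $k$), so $A_k\in\sigma(\w_i:i\ge 0)$, a $\s$-algebra on which $P$ and $Q$ agree; therefore $P$-a.s.\ $A_k$ occurs for infinitely many $k$, which is what was needed.

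I would expect the bookkeeping in the last two steps to be the only real work: one must track precisely how far each reflected crossing time reaches so that the shift-invariance identity $Q(A_k)=Q(\text{shifted }A_k)$ is legitimate and so that $W_k$ and $W_{k+2}$ are genuinely disjoint for large $k$. All the substantive estimates --- in particular the stable-law input behind the bound $Q\bigl(\mathcal{S}_{\eta,d_k,a_k}\cap U_{\eta,d_k}\bigr)\geq 1/k$ --- are already supplied by Lemma~\ref{smbklemma} together with Theorems~\ref{Varstable} and \ref{VETtail}, so nothing new is required.
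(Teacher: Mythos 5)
Your proof is correct and follows essentially the same route as the paper: you pass to the shifted events $\mathcal{S}_{\eta,d_k,a_k}\cap U_{\eta,d_k}$, use Lemma~\ref{smbklemma} for the lower bound $Q\geq 1/k$, exploit the windowing effect of the reflections and the superexponential growth of $n_k$ to make a sparse subfamily independent, pass from $Q$ to $P$ by noting the events eventually live in $\sigma(\w_i:i\geq 0)$, and conclude by the second Borel--Cantelli lemma. The only cosmetic difference is that you work directly with the event that \eqref{smbk} holds (which contains the shifted $\mathcal{S}\cap U$), whereas the paper names the shifted events $\mathcal{S}_k'\cap U_k'$ and observes they imply \eqref{smbk}; the substance is identical.
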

\begin{proof}
Define the sequence of events
\[
\mathcal{S}_k' := \bigcup_{ \tiny{\begin{array}{c} I\subset (n_{k-1}, n_{k-1}+\eta d_k] \\ \#(I) = 2a_k \end{array}}} \!\! \left( \bigcap_{i\in I} \left\{  \mu_{i,d_k,\w}^2 \in[d_k^{2/s},2d_k^{2/s}) \right\} \bigcap_{j\in(n_{k-1},n_{k-1}+\eta d_k]\backslash I} \left\{ \mu_{j,d_k,\w}^2 < d_k^{2/s} \right\} \right) 
\,,
\]
and
\[
U_k' := \left\{ \sum_{i \in (n_{k-1}+ \eta d_k , n_k]} \s_{i,d_{k_m},\w}^2 < 2 d_{k_m}^{2/s} \right\}
\]
Note that due to the reflections of the random walk, the event
$\mathcal{S}_k' \cap U_k' $ depends on the environment between
ladder locations $n_{k-1}-b_{d_k}$ and $n_k$. Thus, since $n_{k-1} - b_{d_k} > n_{k-2}$ for all $k\geq 4$, we have that 
$\{\mathcal{S}_{2k}' \cap U_{2k}'\}_{k=2}^{\infty}$ is an independent sequence of events.
Similarly, for $k$ large enough $\mathcal{S}_k' \cap U_k'$ does not
depend on the environment to left of the origin. Thus
\[
P(\mathcal{S}_k' \cap U_k') =Q(\mathcal{S}_k' \cap U_k' )=Q\left( \mathcal{S}_{\eta,d_k,a_k} \cap U_{\eta,d_k} \right)
\]
for all $k$ large enough. Lemma \ref{smbklemma} then gives
that $\sum_{k=1}^\infty P(\mathcal{S}_{2k}' \cap U_{2k}') =
\infty$, and the Borel-Cantelli Lemma then implies that infinitely
many of the events $\mathcal{S}_{2k}' \cap U_{2k}' $ occur $P-a.s$.
Therefore, $P-a.s.$ there exists a subsequence $k_m=k_m(\w,\eta)$ such that $\mathcal{S}_{k_m}' \cap U_{k_m}'$ occurs for each $m$. Finally, note that the event $\mathcal{S}_{k_m}' \cap U_{k_m}'$ implies \eqref{smbk}. 
\end{proof}
\begin{proof}[\textbf{Proof of Theorem \ref{Tngaussian}:}] \ \\
First, recall that \cite[Corollary 5.6]{pzSL1} gives that there exists an $\eta'>0$ such that 
\begin{equation}
Q\left( \left| \sum_{i=1}^n \left( \s_{i,m,\w}^2-\mu_{i,m,\w}^2
\right) \right| \geq \d n^{2/s} \right) = o(n^{-\eta'}) \qquad \forall \d>0, \quad \forall m\in \N. \label{c56}
\end{equation}
This can be applied along with the Borel-Cantelli Lemma to prove that
\begin{equation}
\sum_{i=n_{k-1}+1}^{n_{k-1} + \lfloor \eta d_k \rfloor } \!\!\! \left( \s_{i,d_k,\w}^2 - \mu_{i,d_k,\w}^2 \right) =  o\left(d_k^{2/s}\right), \quad P-a.s. \label{vmucompare}
\end{equation}
Thus, $P-a.s.$ we may assume that \eqref{vmucompare} holds and that there exists a subsequence $n_{k_m}=n_{k_m}(\w,\eta)$ such that condition \eqref{smbk} in Corollary \ref{smallblocks2} holds. Then, it is enough to prove that
\begin{equation}
\lim_{m\ra\infty} P_\w^{\nu_{\a_m}} \left( \frac{\bar{T}^{(d_{k_m})}_{\nu_{\b_m}} - E_\w^{\nu_{\a_m}} \bar{T}^{(d_{k_m})}_{\nu_{\b_m}}}{\sqrt{v_{k_m,\w}}} \leq y \right) = \Phi(y), \label{reducegaussian}
\end{equation}
and
\begin{equation}
\lim_{m\ra\infty} P_\w^{\nu_{\b_m}} \left( \left| \frac{\bar{T}^{(d_{k_m})}_{x_m} - E_\w^{\nu_{\b_m}} \bar{T}^{(d_{k_m})}_{x_m}}{\sqrt{v_{k_m,\w}}} \right| \geq \e \right) = 0, \quad \forall \e>0. \label{endsmall}
\end{equation}
To prove \eqref{endsmall}, note that by Chebychev's inequality
\[
P_\w^{\nu_{\b_m}} \left( \left| \frac{\bar{T}^{(d_{k_m})}_{x_m} - E_\w^{\nu_{\b_m}} \bar{T}^{(d_{k_m})}_{x_m}}{\sqrt{v_{k_m,\w}}} \right| \geq \e \right) \leq \frac{Var_\w \left( \bar{T}^{(d_{k_m})}_{x_m} - \bar{T}^{(d_{k_m})}_{\b_m} \right) }{\e^2 v_{k_m,\w}} \leq \frac{\sum_{i=\b_m+1}^{\gamma_m} \s_{i,d_{k_m},\w}^2 }{\e^2 v_{k_m,\w}} 
\]
However, by \eqref{vmucompare} and our choice of the subsequence $n_{k_m}$ we have that $\sum_{i=\b_m+1}^{\gamma_m} \s_{i,d_{k_m},\w}^2 < 2 d_{k_m}^{2/s}$, and $v_{k_m,\w} \geq \sum_{i=\a_m+1}^{\b_m} \s_{i,d_{k_m},\w}^2 = \sum_{i=\a_m+1}^{\b_m} \mu_{i,d_{k_m},\w}^2 + o\left( d_{k_m}^{2/s} \right)\geq a_{k_m} d_{k_m}^{2/s} + o\left( d_{k_m}^{2/s} \right)$. Thus
\begin{equation}
\lim_{m\ra\infty} \frac{\sum_{i=\b_m+1}^{\gamma_m} \s_{i,d_{k_m},\w}^2 }{v_{k_m,\w}} = 0, \label{s2v}
\end{equation}
which proves \eqref{endsmall}.
To prove \eqref{reducegaussian}, it is enough to show that the Lindberg-Feller condition is satisfied. That is we need to show
\begin{equation}
\lim_{m\ra\infty} \frac{1}{v_{k_m,\w}} \sum_{i=\a_m+1}^{\b_m} \s_{i,d_{k_m},\w}^2 = 1, \label{LF1}
\end{equation}
and 
\begin{equation}
\lim_{m\ra\infty} \frac{1}{v_{k_m,\w}} \sum_{i=\a_m+1}^{\b_m} E_\w^{\nu_{i-1}} \left[ \left( \bar{T}^{(d_{k_m})}_{\nu_i} - \mu_{i,d_{k_m},\w} \right)^2 \mathbf{1}_{ | \bar{T}^{(d_{k_m})}_{\nu_i}-\mu_{i,d_{k_m},\w}  | > \e \sqrt{v_{m,\w}}}  \right] = 0, \quad \forall \e>0. \label{LF2}
\end{equation}
To show \eqref{LF1} note that the definition of $v_{k_m,\w}$ and our choice of the subsequence $n_{k_m}$ give that
\[
\frac{1}{v_{k_m,\w}} \sum_{i=\a_m+1}^{\b_m} \s_{i,d_{k_m},\w}^2 = 1 - \frac{1}{v_{k_m,\w}} \sum_{i=\b_m+1}^{\gamma_m} \s_{i,d_{k_m,\w}}^2 = 1-o(1),
\]
where the last equality is from \eqref{s2v}. To prove \eqref{LF2}, first note that an application of \cite[Lemma 5.5]{pzSL1} gives that for any $\e'>0$
\[
\sum_{i=n_{k-1}+1}^{n_{k-1}+\lfloor \eta d_k \rfloor} \s_{i,d_k,\w}^2 \mathbf{1}_{M_i \leq d_k^{(1-\e')/s}} = o\left( d_k^{2/s} \right) ,\quad P-a.s. , 
\]
where $M_i$ is defined as in \eqref{Mdef}. 
Then, since $v_{k_m,\w} \geq a_{k_m} d_{k_m}^{2/s} + o\left( d_{k_m}^{2/s} \right)$ we can reduce the sum in \eqref{LF2} to blocks where $M_i > d_{k_m}^{(1-\e')/s}$. That is, it is enough to prove that for some $\e'>0$ and every $\e>0$
\begin{align}
\lim_{m\ra\infty} \frac{1}{v_{k_m,\w}} \sum_{i=\a_{m}+1}^{\b_m} E_\w^{\nu_{i-1}} \left[ \left( \bar{T}^{(d_{k_m})}_{\nu_i}-\mu_{i,d_{k_m},\w} \right)^2 \mathbf{1}_{ | \bar{T}^{(d_{k_m})}_{\nu_i}-\mu_{i,d_{k_m},\w}  | > \e \sqrt{v_{k_m,\w}}}  \right]\mathbf{1}_{M_i > d_{k_m}^{(1-\e')/s}} = 0 . \label{Mlarge}
\end{align}
To get an upper bound for \eqref{Mlarge}, first note that our
choice of the subsequence $n_{k_m}$ gives that for $m$ large enough $v_{k_m,\w} \geq \frac{1}{2} \sum_{i=\a_m+1}^{\b_m} \mu_{i,d_{k_m},\w}^2 \geq \frac{a_{k_m}}{2} \mu_{i,d_{k_m},\w}$ for any $i\in
(\a_m, \b_m]$. Thus, for $m$ large enough we can replace the
indicators inside the expectations in \eqref{Mlarge} by the
indicators of the events $\left\{ \bar{T}_{\nu_i}^{(d_{k_m})}  >
(1+\e \sqrt{a_{k_m}/2}) \mu_{i,d_{k_m},\w} \right\}$. Thus, for
$m$ large enough and $i\in(\a_m, \b_m]$, we have
\begin{align*}
& E_\w^{\nu_{i-1}} \left[ \left(
\bar{T}^{(d_{k_m})}_{\nu_i}-\mu_{i,d_{k_m},\w} \right)^2
\mathbf{1}_{ |
\bar{T}^{(d_{k_m})}_{\nu_i}-\mu_{i,d_{k_m},\w} | >
\e \sqrt{v_{k_m,\w}}}  \right] \\
&\qquad \leq E_\w^{\nu_{i-1}} \left[ \left(
\bar{T}^{(d_{k_m})}_{\nu_i}-\mu_{i,d_{k_m},\w} \right)^2
\mathbf{1}_{ \bar{T}^{(d_{k_m})}_{\nu_i}> (1+\e
\sqrt{a_{k_m}/2})
\mu_{i,d_{k_m},\w} }  \right] \\
&\qquad = \int_{1+\e\sqrt{a_{k_m}/2}}^{\infty} P_\w^{\nu_{i-1}}
\left( \bar{T}_{\nu_i}^{(d_{k_m})} > x \mu_{i,d_{k_m},\w}
\right) 2(x-1)\mu_{i,d_{k_m},\w}^2 \, dx
\,.\end{align*}
We want to get an upper bound on the probabilities inside the integral. If $\e'<\frac{1}{3}$ we can use \cite[Lemma 5.9]{pzSL1} to get that for $k$ large enough, $E_\w^{\nu_{i-1}}\left( \bar{T}^{(d_{k})}_{\nu_i} \right)^j \leq 2^j j! \mu_{i,d_k,\w}^j$ for all $n_{k-1}<i\leq n_k$ such that $M_i > d_k^{(1-\e')/s}$.
Multiplying by $(4\mu_{i,d_k,\w})^{-j}$ and summing over $j$ gives
that $E_\w^{\nu_{i-1}} e^{ \bar{T}^{(d_k)}_{\nu_i} /(4
\mu_{i,d_k,\w}) } \leq 2$. Therefore, Chebychev's inequality gives
\[
P_\w^{\nu_{i-1}} \left( \bar{T}_{\nu_i}^{(d_k)} > x \mu_{i,d_k,\w}
\right) \leq e^{- x/4} E_\w^{\nu_{i-1}} e^{
\bar{T}^{(d_k)}_{\nu_i} / (4 \mu_{i,d_k,\w})} \leq 2 e^{-x/4}
\,.\]
Thus, for all $m$ large enough we have for all $\a_m<i\leq \b_m
\leq n_{k_m}$ with $M_i > d_{k_m}^{(1-\e')/s}$ that
\begin{align*}
\int_{1+\e\sqrt{a_{k_m}/2}}^{\infty} P_\w^{\nu_{i-1}} \left(
\bar{T}_{\nu_i}^{(d_{k_m})} > x \mu_{i,d_{k_m},\w} \right)
2(x-1)\mu_{i,d_{k_m},\w}^2 dx 
&\leq \mu_{i,d_{k_m},\w}^2 \int_{1+\e\sqrt{a_{k_m}/2}}^{\infty} 4(x-1)e^{-x/4}
 dx\\
&= \mu_{i,d_{k_m},\w}^2 \: o\!\left( e^{-a_{k_m}^{1/4}} \right) \,.
\end{align*}
Therefore we have that as $m\ra\infty$,
\eqref{Mlarge} is bounded above by
\begin{align}
\lim_{m\ra\infty} o\left( e^{-a_{k_m}^{1/4}} \right) \frac{1}{v_{k_m,\w}} \left( \sum_{i=\a_m+1}^{\b_m} \mu_{i,d_{k_m},\w}^2 \mathbf{1}_{M_i > d_{k_m}^{(1-\e')/s}} \right)
\,. \label{finalest}
\end{align}
However, since 
\begin{align*}
\frac{1}{v_{k_m,\w}} \sum_{i=\a_m+1}^{\b_m} \mu_{i,d_{k_m},\w}^2 &\leq \frac{1}{ \sum_{i=\a_m+1}^{\b_m} \s_{i,d_{k_m},\w}^2 } \left( \sum_{i=\a_m+1}^{\b_m} \s_{i,d_{k_m},\w}^2
 + o\left( d_{k_m}^{2/s} \right) \right)\\
& \leq 1 + \frac{o\left( d_{k_m}^{2/s} \right)}{2 a_{k_m} d_{k_m}^{2/s} + o\left( d_{k_m}^{2/s} \right)},
\end{align*}
we have that \eqref{finalest} tends to zero as $m\ra\infty$. 
This finishes the proof of \eqref{LF2} and thus of Theorem \ref{Tngaussian}.
\end{proof}
\begin{proof}[\textbf{Proof of Theorem \ref{qCLT}:}] \ \\
Choose $\eta \in (0,1)$ such that $\eta < \frac{1}{\bar\nu}$ where $\bar\nu = E_P \nu$, and then choose $n_{k_m}$ as in Theorem \ref{Tngaussian}. Then for $\b_m$ and $\gamma_m$ defined as in \eqref{abgdef}, we have that \eqref{nuLLN} and the fact that $d_k\sim n_k$ give 
\[
\lim_{m\ra\infty} \frac{\nu_{\b_m}}{n_{k_m}} = \eta \bar{\nu} < 1 < \bar\nu = \lim_{m\ra\infty} \frac{\nu_{\gamma_m}}{n_{k_m}}. 
\]
Thus $x_m \sim n_{k_m} \Ra x_m \in [\nu_{\b_m}, \nu_{\gamma_m} ]$ for all $m$ large enough. Therefore, the conditions of Proposition \ref{generalprop} are satisfied with $F(x)= \Phi(x)$. 
\end{proof}
\end{section}

\begin{section}{Quenched Exponential Limits} \label{exponential}
\begin{subsection}{Analysis of $T_\nu$ when $M_1$ is Large} \label{Laplace}
The goal of this subsection is to analyze the quenched distribution of $\bar{T}^{(n)}_\nu$ on ``large'' blocks (i.e. when $M_1>n^{(1-\e)/s}$). We want to show that conditioned on $M_1$ being large, $\bar{T}^{(n)}_\nu / E_\w \bar{T}^{(n)}_\nu$ is approximately exponentially distributed. We do this by showing that the quenched Laplace transform $E_\w \exp \left\{-\l \frac{\bar{T}^{(n)}_\nu}{E_\w \bar{T}^{(n)}_\nu}\right\}$ is approximately $\frac{1}{1+\l}$ on such blocks. 

As was done in \cite{ESZ}, we analyze the quenched Laplace transform of $\bar{T}^{(n)}_\nu$ by decomposing $\bar{T}^{(n)}_\nu$
into a series of excursions away from 0. An excursion is a ``failure'' if the random walk returns to zero before hitting $\nu$ (i.e. if $T_\nu > T_0^+:= \min\{ k > 0: X_k = 0 \}$), and a ``success'' if the random walk reaches $\nu$ before returning to zero (note that classifying an excursion as a failure/sucess is independent of any modifications to the environment left of zero since if the random walk ventures to the left at all, it must be in a failure excursion). Define $p_\w:=P_\w ( T_\nu<T^+_0)$, and let $N$ be a geometric random variable with parameter $p_\w$ (i.e. $P(N=k) = p_\w (1-p_\w)^k$ for $k\in \N$). Also, let $\{F_i\}_{i=1}^\infty$ be an i.i.d. sequence (also independent of $N$) with $F_1$ having the same distribution as $\bar{T}_\nu^{(n)}$ conditioned on $\left\{ \bar{T}^{(n)}_\nu > T_0^+ \right\}$, and let $S$ be a random variable with the same distribution as $T_\nu$ conditioned on $\left\{ T_\nu < T_0^+ \right\}$ and independent of everything else (note that for sucess excursions we can ignore added reflections to the left of zero). Thus, we have that 
\begin{equation}
\bar{T}^{(n)}_\nu \stackrel{Law}{=} S + \sum_{i=1}^N F_i \qquad\text{(quenched).} \label{Tdec}
\end{equation}
In a slight abuse of notation we will still use $P_\w$ for the probabilities of $F_i, S,$ and $N$ to emphasize that their distributions are dependent on $\w$. 
The following results are easy to verify:
\begin{equation}
E_\w N = \frac{1-p_\w}{p_\w}  \quad\text{and}\quad E_\w \bar{T}^{(n)}_\nu = E_\w S + (E_\w N)( E_\w F_1), \label{ETdec}
\end{equation}
\begin{align}
Var_\w \bar{T}^{(n)}_\nu &= (E_\w N)( Var_\w F_1) + (E_\w F)^2 (Var_\w N) + Var_\w S \nonumber \\
&= (E_\w N )(E_\w F^2) + (E_\w F)^2 (Var_\w N - E_\w N) + Var_\w S \nonumber \\
&= (E_\w N )(E_\w F^2) + (E_\w F)^2(E_\w N)^2 + Var_\w S, \label{VTdec}
\end{align}
and 
\begin{align*}
E_\w e^{-\l \bar{T}^{(n)}_\nu} = E_\w e^{-\l S} E_\w\left[ \left(E_\w e^{-\l F_1}\right)^N \right] = E_\w e^{-\l S} \frac{p_\w}{1-(1-p_\w)\left(E_\w e^{-\l F_1}\right)}, \quad \forall \l\geq 0.
\end{align*}
Also, since $e^{-x} \geq 1-x$ for any $x\in\R$ we have for any $\l\geq 0$ that
\begin{align*}
E_\w e^{-\l \bar{T}^{(n)}_\nu} &\geq \left(1-\l E_\w S \right) \frac{p_\w}{1-(1-p_\w)\left(1- \l E_\w F_1\right)} 
=  \frac{1-\l E_\w S }{1 + \l(E_\w N)(E_\w F_1)}
\geq  \frac{1-\l E_\w S}{1 + \l E_\w \bar{T}^{(n)}_\nu},
\end{align*}
where the first equality and the last inequality are from the formulas for $E_\w N$ and $E_\w \bar{T}^{(n)}_\nu$ given in \eqref{ETdec}. Similarly, since $e^{-x} \leq 1-x+\frac{x^2}{2}$ for all $x\geq 0$ we have that for any $\l \geq 0$ that
\begin{align*}
E_\w e^{-\l \bar{T}^{(n)}_\nu} &\leq \frac{p_\w}{1-(1-p_\w)\left(1- \l E_\w F_1 + \frac{\l^2}{2} E_\w F_1^2\right)} \\
&= \frac{1}{1+\l(E_\w N)(E_\w F_1) - \frac{\l^2}{2} (E_\w N)( E_\w F_1^2)}\\
&= \frac{1}{1+\l(E_\w N)(E_\w F_1) - \frac{\l^2}{2} (Var_\w \bar{T}^{(n)}_\nu - (E_\w N)^2(E_\w F_1)^2 - Var_\w S)}\\
&\leq \frac{1}{1+\l(E_\w \bar{T}^{(n)}_\nu - E_\w S) - \frac{\l^2}{2} (Var_\w \bar{T}^{(n)}_\nu - (E_\w \bar{T}^{(n)}_\nu - E_\w S)^2 )},
\end{align*}
where the first equality and last inequality are from \eqref{ETdec} and the second equality is from \eqref{VTdec}. Therefore, replacing $\l$ by $\l/(E_\w \bar{T}^{(n)}_\nu)$ we get 
\begin{equation}
 E_\w e^{-\l \frac{\bar{T}^{(n)}_\nu}{E_\w \bar{T}^{(n)}_\nu}} \geq \left(1-\l \frac{E_\w S}{E_\w \bar{T}^{(n)}_\nu} \right) \frac{1}{1 + \l } \, , \label{mgflb}
\end{equation}
and
\begin{align}
E_\w e^{-\l \frac{\bar{T}^{(n)}_\nu}{E_\w \bar{T}^{(n)}_\nu}}
&\leq \frac{1}{1+\l - \l \frac{E_\w S}{E_\w \bar{T}^{(n)}_\nu } - \frac{\l^2}{2} \left(\frac{Var_\w \bar{T}^{(n)}_\nu}{(E_\w \bar{T}^{(n)}_\nu)^2 } -\frac{(E_\w \bar{T}^{(n)}_\nu - E_\w S)^2}{(E_\w \bar{T}^{(n)}_\nu)^2 } \right)} \nonumber \\
&\leq \frac{1}{1+\l - (\l+\l^2) \frac{E_\w S}{E_\w \bar{T}^{(n)}_\nu } - \frac{\l^2}{2} \left(\frac{Var_\w \bar{T}^{(n)}_\nu}{(E_\w \bar{T}^{(n)}_\nu)^2 } - 1 \right)} \, . \label{mgfub}
\end{align}
Therefore, we have reduced the problem of showing $E_\w e^{-\l \frac{\bar{T}^{(n)}_\nu}{E_\w \bar{T}^{(n)}_\nu}} \approx \frac{1}{1+\l}$ when $M_1$ is large to showing that $\frac{E_\w S}{E_\w \bar{T}^{(n)}_\nu } \approx 0$ and $\frac{Var_\w \bar{T}^{(n)}_\nu}{(E_\w \bar{T}^{(n)}_\nu)^2 } \approx 1$ when $M_1$ is large. 
In order to analyze $E_\w S$, we define a modified environment which is essentially the environment the random walker ``sees'' once it is told that it reaches $\nu$ before returning to zero. A simple computation similar to the one in \cite[Remark 2 on pages 222-223]{zRWRE} gives that the random walk conditioned to reach $\nu$ before returning to zero is a homogeneous markov chain with transition probabilities given by $\bw_i := P_\w^i(X_1=i+1 | T_\nu < T^+_0 )$. 
Then the definition of $\bw_i$ gives that $\bw_0=\bw_1 =1$, and for $i \in [2,\nu)$ we have $\bw_i = \frac{\w_i P_\w^{i+1}( T_\nu < T_0 )}{P_\w^i ( T_\nu < T_0)}$. Using the hitting time formulas in \cite[(2.1.4)]{zRWRE} we have 
\begin{equation}
\bw_i = \frac{\w_i R_{0,i}}{R_{0,i-1}} \quad \forall i\in[2,\nu), \quad\text{where}\quad R_{0,i}:= \sum_{j=0}^i \Pi_{0,j}. \label{Rdef}
\end{equation}
Let $\bar\rho_i:=\frac{1-\bw_i}{\bw_i}$ and define $\bar{\Pi}_{i,j},$ and $\bar{W}_{i,j}$ analogously to $\Pi_{i,j}$ and $W_{i,j}$ 
using $\bar\rho_i$ in place of $\rho_i$. Then the above formulas for $\bw_i$ give that $\bar\rho_0=\bar\rho_1=0$ and $\bar\rho_i = \rho_i \frac{R_{0,i-2}}{R_{0,i}} \quad\forall i\in[2,\nu)$. Thus,
\begin{equation}
\bar{\Pi}_{i,j} = \Pi_{i,j} \frac{R_{0,i-2}R_{0,i-1}}{R_{0,j-1}R_{0,j}},\quad\forall 2\leq i\leq j<\nu. \label{rhobarformulas}
\end{equation}
Note that since $R_{0,i}\leq R_{0,j}$ for any $0\leq i\leq j$ we have from \eqref{rhobarformulas} that 
\begin{equation}
\bar\Pi_{i,j}\leq \Pi_{i,j} \quad \text{for any } 0\leq i\leq j<\nu \label{PibarlessPi}
\end{equation}
Now, since $E_\w S = E_{\bw} T_\nu$ we get from \eqref{qvformula} that $E_\w S = \nu + 2\sum_{j=2}^{\nu-1} \bar{W}_{2,j} = \nu + 2\sum_{j=2}^{\nu-1} \sum_{i=2}^j \bar{\Pi}_{i,j}$.
Therefore, letting $\bar{M}_1:= \max \{ \bar{\Pi}_{i,j}: 0 \leq i\leq j<\nu \}$ we get the bound 
\begin{equation}
E_\w S \leq \nu + 2\nu^2 \bar{M}_1. \label{ESbound}
\end{equation}
Thus, we need to get bounds on the tail of $\bar{M}_1$. To this end, recall the definition of $M_1$ in \eqref{Mdef} and define $\tau := \max \{ k \in [1,\nu]: \Pi_{0,k-1} = M_1 \}$. Then, define
\begin{equation}
M^-:= \min \{ \Pi_{i,j}: 0<i\leq j < \tau \} \wedge 1, \quad\text{and}\quad M^+:= \max \{ \Pi_{i,j} : \tau < i \leq j < \nu  \}\vee 1 \, .\label{Ddef}
\end{equation} 
\begin{lem} \label{Mpmtail}
For any $\e,\d>0$ we have
\begin{equation}
P( M^- < n^{-\d} , M_1 > n^{(1-\e)/s} ) = o(n^{-1+\e-\d s + \e'}), \quad \forall \e' > 0, \label{xdown}
\end{equation}
and
\begin{equation}
P( M^+ > n^{\d} , M_1 > n^{(1-\e)/s} ) = o(n^{-1+\e-\d s + \e'}), \quad \forall \e' > 0, \label{xup}
\end{equation}
\end{lem}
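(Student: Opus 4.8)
The plan is to pass to the potential walk $V(k)=\sum_{m<k}\log\rho_m$ (so that each $\log\Pi_{i,j}$ is an increment of $V$) and to reduce both estimates to the elementary tail bound
\[
P\Big(\sup_{k\ge 0}\Pi_{0,k}>t\Big)\le t^{-s},\qquad t\ge 1,
\]
which follows from Doob's maximal inequality applied to the nonnegative martingale $k\mapsto\Pi_{0,k}^{\,s}$ (a martingale precisely because $E_P\rho^s=1$, by \eqref{zerospeedregime}), together with the exponential tail \eqref{nutail} of $\nu$ — so $E_P\nu^p<\infty$ for every $p\ge 1$ — and the polynomial tail \eqref{Mtail} of $M_1$.

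First I would treat \eqref{xup}. On $\{M^+>n^\d\}$ there are positions $\tau<i\le j<\nu$ with $\Pi_{i,j}>n^\d$; since $\tau$ is the last maximizer of $k\mapsto\Pi_{0,k-1}$ on $[1,\nu]$ and $\tau<i\le\nu-1$, one has $\tau-1\le i-1\le\nu-2$, hence $\nu>i$ and $\Pi_{0,\tau-1}=M_1$ with $\tau-1\le i-1$. Therefore
\[
\{M^+>n^\d,\ M_1>n^{(1-\e)/s}\}\subseteq\bigcup_{i\ge 1}\Big(\big\{\nu>i,\ \exists\,k\le i-1:\Pi_{0,k}>n^{(1-\e)/s}\big\}\cap\big\{\exists\,l\ge 0:\Pi_{i,i+l}>n^\d\big\}\Big).
\]
In each term the two events depend on the disjoint coordinate blocks $(\w_0,\dots,\w_{i-1})$ and $(\w_i,\w_{i+1},\dots)$, hence are $P$-independent; the second has probability $\le n^{-\d s}$, while summing the first over $i$ gives $E_P\big[\#\{i:\nu>i,\ \exists k\le i-1:\Pi_{0,k}>n^{(1-\e)/s}\}\big]\le E_P\big[\nu\,\mathbf 1_{\{M_1>n^{(1-\e)/s}\}}\big]$, which by Hölder and the two tail bounds is $O\big(n^{-(1-\e)(1-1/p)}\big)$ for every $p$. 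Choosing $p$ large enough makes the product $o(n^{-1+\e-\d s+\e'})$.

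For \eqref{xdown} the scheme is the same with one extra bookkeeping layer. On $\{M^-<n^{-\d}\}$ there are $0<i\le j<\tau$ with $\Pi_{i,j}<n^{-\d}$; setting $b:=j+1\le\tau$ and using $M_1=\Pi_{0,i-1}\,\Pi_{i,j}\,\Pi_{j+1,\tau-1}$ (empty products read as $1$) one gets $\Pi_{0,i-1}\cdot\Pi_{j+1,\tau-1}=M_1/\Pi_{i,j}>M_1 n^\d\ge n^{(1-\e)/s}n^\d$, both factors being $\ge 1$. Choosing dyadic lower bounds $A_0,B_0$ for the two factors (so $A_0B_0\gtrsim n^{(1-\e)/s}n^\d$) and decoupling the environment at $b$, the event is contained in a union over $b$ and over admissible $(A_0,B_0)$ of an intersection of a left event in $\s(\w_0,\dots,\w_{b-1})$ — $\nu>b$ and $\sup_{k\le b-1}\Pi_{0,k}\ge A_0$ — with an independent right event in $\s(\w_b,\w_{b+1},\dots)$ — $\sup_{l\ge 0}\Pi_{b,b+l}\ge B_0$. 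Summing the right factor gives $\le B_0^{-s}$, summing the left factor over $b$ gives $\le E_P[\nu\,\mathbf 1_{\{M_1\ge A_0\}}]=O(A_0^{-s(1-1/p)})$, and the remaining (geometric) dyadic sum over $A_0B_0\gtrsim n^{(1-\e)/s}n^\d$ produces $O\big(n^{-(1-\e)(1-1/p)-\d s(1-1/p)}\big)$, again $o(n^{-1+\e-\d s+\e'})$ once $p$ is large.

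The hard part is purely structural: $\nu,\tau,i,j$ are all random and $\tau$, the peak location of the potential inside the block, is not a stopping time, so there is no fixed coordinate set on which to compute a probability. The device that makes it work is to split the event at the random position $b$ (respectively $i$) into two pieces living on complementary, hence $P$-independent, coordinates, and then to recognize that the sum over the splitting position is exactly $E_P[\nu\,\mathbf 1_{\{M_1\text{ large}\}}]$, which is controlled by playing the exponential tail of $\nu$ against the polynomial tail of $M_1$ through Hölder; the price — an arbitrarily small power of $n$, plus (for \eqref{xdown}) harmless constants from the geometric dyadic sum — is absorbed by the $n^{\e'}$ in the statement.
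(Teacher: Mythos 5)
Your proposal is correct, but it takes a genuinely different route from the paper. The paper first truncates at $\{\nu \le b_n\}$ (negligible by \eqref{nutail} since $b_n = \lfloor\log^2 n\rfloor$), then unions over all $o(n^{\e'})$ triples of deterministic indices $(i,j,k)$ standing in for the random $(i,j,\tau-1)$, and handles each fixed triple by the single observation that $\Pi_{0,i-1}\Pi_{j+1,k}$ is a product of $s$-th moments equal to one, so Markov on $\Pi^s$ gives exactly $n^{-1+\e-\d s}$; the combinatorial factor $b_n^3$ supplies the $n^{\e'}$. You instead decouple at the random boundary position, invoke Doob's maximal inequality on the nonnegative martingale $k\mapsto\Pi_{0,k}^s$ to bound $\sup_k\Pi$ in one stroke rather than unioning over $k$, replace the truncation by H\"older on $E_P[\nu\,\mathbf 1_{\{M_1>\cdot\}}]$ (the $n^{\e'}$ now coming from the $L^p$-loss as $p\to\infty$), and — for \eqref{xdown} — add a dyadic sum over how the large product $\Pi_{0,i-1}\Pi_{j+1,\tau-1}$ splits between its two factors, which the paper avoids entirely by keeping $i,j,k$ fixed. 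Your approach is slicker in spirit (it would survive with only polynomial moments of $\nu$ rather than the exponential tail, and the Doob bound eliminates one layer of union bound), while the paper's is more elementary and shorter, needing nothing beyond finite unions and a one-line Chebyshev. Two small points you should tidy: when you say both factors in $\Pi_{0,i-1}\Pi_{j+1,\tau-1}$ are $\ge 1$ you are implicitly using $i<\nu$ (so $\Pi_{0,i-1}\ge1$) and that $\Pi_{0,j}\le\Pi_{0,\tau-1}=M_1$ (so $\Pi_{j+1,\tau-1}\ge1$), and these deserve a line; and in the boundary case $b=\tau$ the right factor is the empty product, so you should formulate the right event as $\max\bigl(1,\sup_{l\ge0}\Pi_{b,b+l}\bigr)\ge B_0$ so that the $B_0=1$ term is handled trivially (the final estimate is unaffected).
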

\begin{proof}
Since $\Pi_{0,\tau-1} = M_1$ by definition we have
\begin{align}
P( M^- < n^{-\d} , M_1 > n^{(1-\e)/s} ) &\leq P\left( \exists 0 < i\leq j < \tau - 1: \Pi_{i,j} < n^{-\d}, \quad \Pi_{0,\tau-1} > n^{(1-\e)/s}   \right) \nonumber \\
&\leq P(\tau > b_n) + \sum_{ 0<i\leq j < k < b_n }P\left( \Pi_{i,j} < n^{-\d}, \quad \Pi_{0,k} > n^{(1-\e)/s}   \right) \nonumber \\
&\leq P(\nu > b_n) + \sum_{ 0<i\leq j < k < b_n }P\left( \Pi_{0,i-1}\Pi_{j+1,k} > n^{(1-\e)/s + \d}   \right) \label{xfluc}.
\end{align}
Since \eqref{nutail} gives that $P(\nu > b_n) \leq C_1 e^{-C_2 b_n}$ we need only handle the second term in \eqref{xfluc} to prove \eqref{xdown}.
However, Chebychev's inequality and the fact that $P$ is a product measure give that
\[
P\left( \Pi_{0,i-1}\Pi_{j+1,k} > n^{(1-\e)/s + \d}   \right) \leq n^{-1+\e-\d s} (E_P \rho^s)^{i+k-j} = n^{-1+\e-\d s}.
\]
Since the number of terms in the sum in \eqref{xfluc} is at most $(b_n)^3 = o(n^{\e'})$ we have proved \eqref{xdown}. The proof of \eqref{xup} is similar:
\begin{align*}
P( M^+ > n^{\d} , M_1 > n^{(1-\e)/s} ) &\leq P\left( \exists \tau < i\leq j < \nu: \Pi_{i,j} > n^{\d}, \quad \Pi_{0,\tau-1} > n^{(1-\e)/s}   \right) \nonumber \\
&\leq P(\nu > b_n) + \sum_{ 0\leq k <i\leq j < b_n }P\left( \Pi_{0,k}\Pi_{i,j} > n^{(1-\e)/s + \d}   \right)\\
&\leq C_1 e^{-C_2 b_n} + (b_n)^3 n^{-1+\e-\d s} = o(n^{-1+\e-\d s + \e'})
\end{align*}
\end{proof}
\begin{cor} \label{Sbig}
For any $\e,\d>0$ we have
\[
P\left( E_\w S \geq n^{5\d} , M_1 > n^{(1-\e)/s} \right) = o(n^{-1+\e-\d s + \e'}), \quad \forall \e'>0. 
\]
\end{cor}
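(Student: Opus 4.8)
The plan is to combine the deterministic bound on $E_\w S$ from \eqref{ESbound} with the tail estimates for $\bar M_1$ that follow from Lemma \ref{Mpmtail}. The first step is to control $\bar M_1$ in terms of $M^-$ and $M^+$. From \eqref{rhobarformulas} we have, for $2\le i\le j<\nu$, the identity $\bar\Pi_{i,j}=\Pi_{i,j}\frac{R_{0,i-2}R_{0,i-1}}{R_{0,j-1}R_{0,j}}$. The idea is to split into the cases $j<\tau$, $i>\tau$, and $i\le\tau\le j$. When $j<\tau$ one writes $\Pi_{i,j}=\Pi_{0,\tau-1}/(\Pi_{0,i-1}\Pi_{j+1,\tau-1})$ and bounds things using $M_1$, $M^-$; when $i>\tau$ one similarly uses $M^+$; and in the remaining range the ratio $R_{0,i-2}R_{0,i-1}/(R_{0,j-1}R_{0,j})\le 1$ together with $\Pi_{i,j}\le\Pi_{i,\tau-1}\Pi_{\tau,j}$ and the definitions of $M^-,M^+$ gives control. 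The upshot should be a deterministic inequality of the shape $\bar M_1\le C\,\nu^{C}\,(M^-)^{-C}M^+$ (a crude polynomial-in-everything bound suffices), so that on the event $\{M^-\ge n^{-\d},\ M^+\le n^{\d},\ \nu\le b_n\}$ one gets $\bar M_1\le n^{C'\d}b_n^{C}$, hence $\bar M_1\le n^{4\d}$ (say) for $n$ large once $\d$ is shrunk appropriately — I will simply rename the exponent, so without loss of generality the statement is phrased with the constant $5$.

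The second step assembles the pieces. By \eqref{ESbound}, $E_\w S\le \nu+2\nu^2\bar M_1$, so on $\{\nu\le b_n\}\cap\{\bar M_1\le n^{4\d}\}$ we have $E_\w S\le b_n+2b_n^2 n^{4\d}\le n^{5\d}$ for $n$ large. Therefore
\[
P\left(E_\w S\ge n^{5\d},\ M_1>n^{(1-\e)/s}\right)\le P(\nu>b_n)+P\left(M^-<n^{-c\d},\ M_1>n^{(1-\e)/s}\right)+P\left(M^+>n^{c\d},\ M_1>n^{(1-\e)/s}\right),
\]
for the appropriate constant $c$ coming from the first step. The first probability is $O(e^{-C_2 b_n})$ by \eqref{nutail}, which is negligible; the other two are $o(n^{-1+\e-c\d s+\e'})$ by Lemma \ref{Mpmtail}. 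Finally, since $\e'>0$ is arbitrary and $\d>0$ can be chosen freely, absorbing constants into $\d$ (replacing $c\d$ by $\d$) yields the claimed $o(n^{-1+\e-\d s+\e'})$ for every $\e'>0$; strictly speaking one proves it with $\d$ replaced by $\d/c$ and then relabels, which is legitimate because the statement is quantified over all $\d>0$.

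The main obstacle I anticipate is the first step: getting a clean deterministic bound on $\bar M_1$ in terms of $M^-$, $M^+$, $M_1$ and $\nu$. The definitions \eqref{Mdef}, \eqref{Ddef} single out the location $\tau$ where $\Pi_{0,\cdot}$ is maximal, and the ratios of the $R_{0,\cdot}$ in \eqref{rhobarformulas} need to be handled carefully: one has $R_{0,k}\ge 1$ always and $R_{0,k}\le (k+1)\max_{j\le k}\Pi_{0,j}\le \nu M_1$, but a lower bound on $R_{0,j}$ for $j$ near $\nu$ in terms of $M_1$ requires knowing that $\Pi_{0,\cdot}$ does not decay too fast after $\tau$, which is exactly what $M^+$ (and the relation $\Pi_{0,j}=M_1/\Pi_{j+1,\tau-1}$ type manipulations) controls. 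Once the inequality $\bar M_1\le (\text{poly in }\nu)\cdot(M^-)^{-O(1)}\cdot M^+$ is in hand, everything else is bookkeeping with \eqref{nutail} and Lemma \ref{Mpmtail}.
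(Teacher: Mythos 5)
Your overall strategy matches the paper's: use \eqref{ESbound} to reduce to bounding $\bar M_1$, bound $\bar M_1$ via $M^-,M^+,\nu$ using the formula \eqref{rhobarformulas} for $\bar\Pi_{i,j}$, then finish with \eqref{nutail} and Lemma~\ref{Mpmtail}. However, there is a genuine gap in your fallback plan. The exponent $5\d$ in the event $\{E_\w S\ge n^{5\d}\}$ and the exponent $\d s$ in the rate $o(n^{-1+\e-\d s+\e'})$ involve the \emph{same} $\d$, so ``proving the statement with $\d$ replaced by $\d/c$ and relabeling'' does not rescue a crude bound $\bar M_1\le C\nu^C(M^-)^{-C}M^+$ with $C$ too large: if you use the cutoffs $M^-\ge n^{-\d/c}$, $M^+\le n^{\d/c}$ so that $\bar M_1\le n^{4\d/c}b_n^C$ and hence $E_\w S\le n^{5\d/c}$, then Lemma~\ref{Mpmtail} only gives you $o(n^{-1+\e-(\d/c)s+\e'})$, which for $c>1$ is \emph{strictly weaker} than $o(n^{-1+\e-\d s+\e'})$, and the ``$\forall\e'>0$'' quantifier cannot absorb a fixed multiplicative loss in the $\d s$ term (you would need $\e'$ bounded away from zero). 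In short, relabeling preserves the statement only if the coefficient of $\d$ that you actually achieve matches the claimed $5$.

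So you really do need the sharp deterministic inequality, which the paper establishes explicitly: for $i<\tau$ one writes $R_{0,i}=\sum_{k=0}^i\Pi_{0,i}/\Pi_{k+1,i}\le\Pi_{0,i}(i+1)/M^-$, while $R_{0,j}\ge\Pi_{0,j}$ always, and plugging into \eqref{rhobarformulas} for $2\le i\le j\le\tau$ gives $\bar\Pi_{i,j}\le (i/M^-)^2\cdot 1/\Pi_{i-1,j-1}\le i^2/(M^-)^3$; for $\tau<i\le j<\nu$, \eqref{PibarlessPi} gives $\bar\Pi_{i,j}\le M^+$; and the mixed range follows by multiplicativity. Hence $\bar M_1\le\nu^2 M^+/(M^-)^3$, so on $\{\nu\le b_n,\,M^-\ge n^{-\d},\,M^+\le n^\d\}$ one has $E_\w S\le\nu+2\nu^4 M^+/(M^-)^3\le b_n+2b_n^4 n^{4\d}\le n^{5\d}$ for $n$ large, and the union bound with \eqref{nutail} and Lemma~\ref{Mpmtail} at the \emph{same} $\d$ yields exactly $o(n^{-1+\e-\d s+\e'})$. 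The crucial point you should make explicit is that the power of $(M^-)^{-1}$ is $3$ (and of $M^+$ is $1$): anything larger would break the coefficient $5$ in $n^{5\d}$, which cannot then be repaired by adjusting $\d$ or $\e'$.
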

\begin{proof}
Recall that \eqref{ESbound} gives $E_\w S \leq \nu + 2 \nu^2 \bar{M}_1$.
We will use $M^-$ and $M^+$ to get bounds on $\bar{M}_1$. First, note that for any $i\in[0,\tau)$ we have
\[
R_{0,i} = \sum_{k=0}^i \Pi_{0,k} = \Pi_{0,i} + \sum_{k=0}^{i-1} \frac{\Pi_{0,i}}{\Pi_{k+1,i}} \leq \Pi_{0,i}\left(\frac{i+1}{M^-}\right).
\]
Note also that $R_{0,j} \geq \Pi_{0,j}$ holds for any $j\geq 0$. Thus, for any $2\leq i \leq  j \leq \tau$  we have
\begin{align*}
\bar{\Pi}_{i,j} &= \Pi_{i,j} \frac{R_{0,i-2}R_{0,i-1}}{R_{0,j-1}R_{0,j}} 
\leq \Pi_{i,j} \left(\frac{i}{M^-}\right)^2 	\frac{\Pi_{0,i-2}\Pi_{0,i-1}}{\Pi_{0,j-1}\Pi_{0,j}} 
=\left(\frac{i}{M^-}\right)^2 	\frac{1}{\Pi_{i-1,j-1}} \leq \frac{i^2}{(M^-)^3}.
\end{align*}
Also, from \eqref{PibarlessPi} we have that $\bar{\Pi}_{i,j} \leq \Pi_{i,j} \leq M^+$ for $\tau < i\leq j<\nu$. 
Therefore we have that $\bar{M}_1 \leq \frac{\nu^2 M^+}{(M^-)^3}$ (note that here we used that $M^-\leq 1$ and $M^+ \geq 1$). Thus,
\[
P\left( E_\w S \geq n^{5\d} , M_1 > n^{(1-\e)/s} \right) \leq P\left( \nu + \frac{2 \nu^4 M^+}{(M^-)^3} > n^{5\d} , M_1 > n^{(1-\e)/s} \right).
\]
An easy argument using \eqref{nutail} and Lemma \ref{Mpmtail} finishes the proof. 
\end{proof}
\begin{lem} \label{VarET2compare}
For any $\e,\d>0$ we have
\[
Q\left( \left| \frac{Var_\w \bar{T}^{(n)}_\nu}{(E_\w \bar{T}^{(n)}_\nu)^2} - 1 \right| \geq  n^{-\d} ,\quad  M_1 > n^{(1-\e)/s} \right) = o(n^{-2+2\e+\d s+\e'}), \quad \forall \e'>0
\]
\end{lem}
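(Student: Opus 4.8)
The plan is to feed the excursion decomposition \eqref{Tdec}--\eqref{VTdec} directly into a computation. Writing $\mu_N=E_\w N$ and using $E_\w \bar T^{(n)}_\nu=E_\w S+\mu_N E_\w F_1$ together with \eqref{VTdec} and $Var_\w S+(E_\w S)^2=E_\w(S^2)$, one finds
\[
\frac{Var_\w \bar T^{(n)}_\nu}{(E_\w \bar T^{(n)}_\nu)^2}-1=\frac{\mu_N\,E_\w F_1^2}{(E_\w \bar T^{(n)}_\nu)^2}+\frac{E_\w(S^2)}{(E_\w \bar T^{(n)}_\nu)^2}-\frac{2\,E_\w S}{E_\w \bar T^{(n)}_\nu}\,.
\]
All three summands are nonnegative, so it suffices to show that for every environment in $\{M_1>n^{(1-\e)/s}\}$ outside a set of $Q$-measure $o(n^{-2+2\e+\d s+\e'})$, each of
\[
\frac{E_\w F_1^2}{\mu_N\,(E_\w F_1)^2},\qquad \frac{E_\w(S^2)}{(E_\w \bar T^{(n)}_\nu)^2},\qquad \frac{E_\w S}{E_\w \bar T^{(n)}_\nu}
\]
is at most $\tfrac13 n^{-\d}$ (here I used $(E_\w\bar T^{(n)}_\nu)^2\ge(\mu_N E_\w F_1)^2$ to control the first summand).

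The two $S$-terms are the easier ones. Since $E_\w \bar T^{(n)}_\nu\ge M_1>n^{(1-\e)/s}$, it is enough to bound $E_\w S$ and $E_\w(S^2)$ by sufficiently small powers of $n$: Corollary \ref{Sbig} does this for $E_\w S$, and $E_\w(S^2)=Var_{\bar\w}T_\nu+(E_{\bar\w}T_\nu)^2$ together with $\bar M_1\le \nu^2 M^+/(M^-)^3$ (from the proof of Corollary \ref{Sbig}) and \eqref{qvformula} gives $E_\w(S^2)\le \mathrm{poly}(\nu)\,(M^+)^2/(M^-)^6$, whose tail on $\{M_1>n^{(1-\e)/s}\}$ is controlled by Lemma \ref{Mpmtail} exactly as in Corollary \ref{Sbig}. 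For the first term I would first note that $\mu_N$ is large: the gambler's-ruin formula $p_\w=\w_0/\sum_{k=0}^{\nu-1}\Pi_{1,k}$ and $\sum_{k=0}^{\nu-1}\Pi_{1,k}\ge\Pi_{1,\tau-1}=M_1/\rho_0$ give $p_\w\le(1-\w_0)/M_1$, hence $\mu_N=(1-p_\w)/p_\w\ge M_1/2$. It then remains to bound the moment ratio $E_\w F_1^2/(E_\w F_1)^2$ of a single failure excursion by a power of $n$ strictly below $(1-\e)/s$.

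Controlling $E_\w F_1^2$ is the main obstacle, and it is here that $M^-$, $M^+$ and (crucially) the \emph{second} largest of the products $\Pi_{i,j}$ over the block enter. A failure excursion never reaches $\nu$, so — unlike $\bar T^{(n)}_\nu$ itself — it cannot exploit the big trap at $\tau$, and its length moments should be governed only by the part of the block it actually sees. I would make this quantitative either by introducing, as was done for $S$ via $\bar\w$, the homogeneous environment seen by the walk conditioned on $\{T_\nu>T_0^+\}$ and bounding its maximal partial product, or by splitting a failure excursion over its successive returns to $0$ and estimating each piece through \eqref{qvformula}; either way one expects $E_\w F_1^{\,j}\le C_j\,\mathrm{poly}(\nu)\,\widehat M^{\,j}$, with $\widehat M$ a ``secondary trap'' quantity bounded both by $\mathrm{poly}(\nu)\,M^+/(M^-)^3$ and by $\mathrm{poly}(\nu)\cdot(\text{second largest }\Pi_{i,j})$. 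On a good event of the form
\[
G_n:=\{\nu\le b_n\}\cap\{M^-\ge n^{-\d_1}\}\cap\{M^+\le n^{\d_2}\}\cap\{\text{second largest }\Pi_{i,j}\le M_1\,n^{-\d_3}\},
\]
with $\d_1,\d_2$ chosen just below $(1-\e)/s$ and $\d_3$ comparable to $\d$, one then gets on $G_n\cap\{M_1>n^{(1-\e)/s}\}$ that $E_\w S,E_\w(S^2)$ are negligible against $M_1$ and $E_\w F_1^2/(E_\w F_1)^2\le n^{(1-\e)/s-\d}$, so all three ratios are $\le\tfrac13 n^{-\d}$ for $n$ large.

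Finally one estimates $Q(G_n^c\cap\{M_1>n^{(1-\e)/s}\})$. The part $\{\nu>b_n\}$ is handled by \eqref{nutail}; the parts $\{M^-<n^{-\d_1}\}$ and $\{M^+>n^{\d_2}\}$ by Lemma \ref{Mpmtail}, whose exponent $-1+\e-\d_1 s+\e'$ is $\le-2+2\e+\d s+\e'$ precisely because $\d_1\ge(1-\e)/s-\d$. The part $\{\text{second largest }\Pi_{i,j}>M_1 n^{-\d_3}\}$ is the genuinely doubly-rare event: on $\{M_1>n^{(1-\e)/s}\}$ it forces two disjoint stretches of the environment in one block each to have a partial product $\gtrsim n^{(1-\e)/s-\d_3}$, so a Chebyshev bound plus the independence of disjoint blocks under $Q$ — run as in the proof of Lemma \ref{Mpmtail}, but summing over the two locations and over dyadic values of $M_1$ — gives probability $O(\mathrm{poly}(\log n)\,n^{-2(1-\e)+\d_3 s})=o(n^{-2+2\e+\d s+\e'})$ once $\d_3<\d+\e'/s$. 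Assembling the four pieces proves the lemma; the delicate point is choosing $\d_1,\d_2,\d_3$ (and the powers of $\nu$ in the $F_1$-moment bound) so that the deterministic estimates on $G_n$ and the probabilistic estimate for $G_n^c$ are met simultaneously, the same kind of bookkeeping already carried out in Lemmas \ref{Vmdublb} and \ref{maxLD}.
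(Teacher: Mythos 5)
Your algebraic identity from the excursion decomposition \eqref{Tdec}--\eqref{VTdec} is correct, and the high-level plan (bound each of the three nonnegative quantities by $\tfrac13 n^{-\d}$ off a small exceptional set) is sound. But this is a genuinely different route from the paper's. The paper simply invokes the already-proved two-sided inequality $(E_\w \bar T^{(n)}_\nu)^2 - D^+(\w) \le Var_\w \bar T^{(n)}_\nu \le (E_\w \bar T^{(n)}_\nu)^2 + 8 R_{0,\nu-1} D^-(\w)$ from \cite[(59)]{pzSL1} together with the tail bounds $Q(D^+ > x)=o(x^{-s+\e''})$ and $Q(R_{0,\nu-1}D^- > x)=o(x^{-s+\e''})$ from \cite[Lemma 5.2 \& Corollary 5.4]{pzSL1}; after using $E_\w \bar T^{(n)}_\nu \ge M_1 > n^{(1-\e)/s}$ this is essentially a three-line argument. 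You are, in effect, proposing to re-derive those bounds on $D^\pm$ from the excursion picture.

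The concrete gap is the bound on $E_\w F_1^2$. You need $E_\w F_1^2/(E_\w F_1)^2 \le \tfrac13 n^{-\d}\mu_N$, hence (since $\mu_N \asymp M_1$) a polynomial bound strictly below $n^{(1-\e)/s-\d}$ on the moment ratio, and you propose to obtain it on a good event $G_n$ requiring \emph{the second largest $\Pi_{i,j}$ over the block} to be $\le M_1 n^{-\d_3}$. That event is essentially empty: with $\tau$ the argmax, $\Pi_{1,\tau-1}=M_1/\rho_0$ and $\Pi_{0,\tau-2}=M_1/\rho_{\tau-1}$ are both among the excluded products, and at least one of them is $\ge c\,M_1$ with $Q$-probability bounded away from zero. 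Hence $Q\bigl(G_n^c \cap\{M_1>n^{(1-\e)/s}\}\bigr) \asymp Q\bigl(M_1>n^{(1-\e)/s}\bigr) \sim C_3 n^{-1+\e}$, which is \emph{not} $o(n^{-2+2\e+\d s+\e'})$. So the counting argument you sketch for this piece of $G_n^c$ cannot close. To make the excursion route work you would have to replace this by a genuinely "disjoint-stretch" quantity (and actually carry out the moment estimate $E_\w F_1^j\le C_j\,\mathrm{poly}(\nu)\,\widehat M^j$, which you only assert), at which point you would be re-proving the $D^\pm$ estimates of \cite{pzSL1} rather than using them.
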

\begin{proof}
Recall that from \cite[(61)]{pzSL1} we have that there exist explicit non-negative random variables $D^+(\w)$ and $D^-(\w)$ such that
\[
\left( E_\w \bar{T}^{(n)}_\nu \right)^2 -D^+(\w) \leq Var_\w \bar{T}^{(n)}_\nu  \leq
\left( E_\w \bar{T}^{(n)}_\nu \right)^2 + 8R_{0,\nu-1}D^-(\w), 
\]
where $R_{0,\nu-1}$ is defined as in \eqref{Rdef}.
Therefore, since $E_\w \bar{T}^{(n)}_\nu \geq M_1$, we have
\begin{align}
&Q\left( \left| \frac{Var_\w \bar{T}^{(n)}_\nu}{(E_\w \bar{T}^{(n)}_\nu)^2} - 1 \right| \geq  n^{-\d} ,  M_1 > n^{(1-\e)/s} \right) \nonumber \\
&\qquad \leq Q\left( 8 R_{0,\nu-1} D^-(\w) > n^{(2-2\e)/s - \d} \right) + Q\left( D^+(\w) >n^{(2-2\e)/s-\d} \right). \label{Dpm}
\end{align}
However, \cite[Lemma 5.2 \& Corollary 5.4]{pzSL1} give respectively that $Q(D^+(\w) > x) = o(x^{-s+\e''})$ and $Q\left( R_{0,\nu-1} D^-(\w) > x \right) = o(x^{-s+\e''})$ for any $\e''>0$. Therefore, both terms in \eqref{Dpm} are of order
$o\left( n^{-2+2\e +\d s + \e''((2-2\e)/s - \d)} \right)$. The lemma then follows since $\e''>0$ is arbitrary. 
\end{proof}

For any $i$, define the scaled quenched Laplace transforms 
$\phi_{i,n}(\l) := E_\w^{\nu_{i-1}} \exp\left\{-\l \frac{\bar{T}^{(n)}_{\nu_i}}{\mu_{i,n,\w}} \right\}$.
\begin{lem} \label{mgflem}
Let $\e<\frac{1}{8}$, and define $\e':= \frac{1-8\e}{5} > 0$. Then
\[
Q\left( \exists \l \geq 0: \phi_{1,n}(\l) \notin \left[ \frac{1-\l n^{-\e /s}}{1+\l} , \frac{1}{1+\l-\left(\l+\frac{3 \l^2}{2}\right)n^{-\e/s} } \right],\: M_1 > n^{(1-\e)/s} \right) = o\left( n^{-1-\e'} \right). 
\]
\end{lem}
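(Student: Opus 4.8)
The plan is to read the two-sided bound off directly from the quenched Laplace transform estimates \eqref{mgflb} and \eqref{mgfub}, using Corollary \ref{Sbig} and Lemma \ref{VarET2compare} to control the two error quantities occurring there. Recall that the discussion leading to \eqref{mgflb}--\eqref{mgfub} has already reduced matters to showing that, off an $o(n^{-1-\e'})$-fraction of the environments with $M_1>n^{(1-\e)/s}$, one has simultaneously
\[
\frac{E_\w S}{E_\w \bar{T}^{(n)}_\nu} < n^{-\e/s} \qquad\text{and}\qquad \left| \frac{Var_\w \bar{T}^{(n)}_\nu}{(E_\w \bar{T}^{(n)}_\nu)^2} - 1 \right| < n^{-\e/s}.
\]

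To this end I would fix the auxiliary exponents $\delta_1 := \tfrac{1-2\e}{5s}$ and $\delta_2 := \tfrac{\e}{s}$ and introduce the ``good'' event
\[
G_n := \left\{ E_\w S < n^{5\delta_1} \right\} \cap \left\{ \left| \frac{Var_\w \bar{T}^{(n)}_\nu}{(E_\w \bar{T}^{(n)}_\nu)^2} - 1 \right| < n^{-\delta_2} \right\}.
\]
On $G_n\cap\{M_1>n^{(1-\e)/s}\}$, the inequality $E_\w \bar{T}^{(n)}_\nu\geq M_1>n^{(1-\e)/s}$ gives $\tfrac{E_\w S}{E_\w \bar{T}^{(n)}_\nu}<n^{5\delta_1-(1-\e)/s}=n^{-\e/s}$, so the right side of \eqref{mgflb} is at least $\tfrac{1-\l n^{-\e/s}}{1+\l}$; and substituting $\tfrac{E_\w S}{E_\w \bar{T}^{(n)}_\nu}<n^{-\e/s}$ together with $\tfrac{Var_\w \bar{T}^{(n)}_\nu}{(E_\w \bar{T}^{(n)}_\nu)^2}-1<n^{-\e/s}$ into the denominator on the right side of \eqref{mgfub} bounds that denominator below by $1+\l-(\l+\l^2)n^{-\e/s}-\tfrac{\l^2}{2}n^{-\e/s}=1+\l-(\l+\tfrac{3\l^2}{2})n^{-\e/s}$. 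This places $\phi_{1,n}(\l)$ in the interval of the statement for every $\l\geq 0$, reading (as is already implicit in \eqref{mgfub}) the upper bound as $+\infty$ on the irrelevant range $\l\gtrsim n^{\e/s}$ where that last expression fails to be positive. Hence $G_n\cap\{M_1>n^{(1-\e)/s}\}$ is disjoint from the event in the statement, and it remains only to bound $Q\big(G_n^c\cap\{M_1>n^{(1-\e)/s}\}\big)$.

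By a union bound together with $Q(\cdot)\leq P(\cdot)/P(\mathcal{R})$,
\[
Q\big(G_n^c\cap\{M_1>n^{(1-\e)/s}\}\big) \leq Q\big(E_\w S\geq n^{5\delta_1},\,M_1>n^{(1-\e)/s}\big) + Q\left(\left| \frac{Var_\w \bar{T}^{(n)}_\nu}{(E_\w \bar{T}^{(n)}_\nu)^2}-1\right|\geq n^{-\delta_2},\,M_1>n^{(1-\e)/s}\right).
\]
Corollary \ref{Sbig} applied with $\delta=\delta_1$ bounds the first summand by $o(n^{-1+\e-\delta_1 s+\e''})=o(n^{-6/5+7\e/5+\e''})$ for every $\e''>0$, and Lemma \ref{VarET2compare} applied with $\delta=\delta_2$ bounds the second by $o(n^{-2+2\e+\delta_2 s+\e''})=o(n^{-2+3\e+\e''})$. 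Since $0<\e<\tfrac18$, we have $\e'=\tfrac{1-8\e}{5}>0$ and $-1-\e'=-\tfrac65+\tfrac{8\e}{5}$, which strictly exceeds both $-\tfrac65+\tfrac{7\e}{5}$ (as $\e>0$) and $-2+3\e$ (as $\e<\tfrac47$); so, choosing $\e''$ small enough, both summands are $o(n^{-1-\e'})$, as required. I do not expect a genuine obstacle here: the content is simply the choice of $\delta_1,\delta_2$ so that the rates supplied by Corollary \ref{Sbig} and Lemma \ref{VarET2compare} beat the prescribed $n^{-1-\e'}$, together with the harmless bookkeeping that \eqref{mgfub} is only informative while its denominator stays positive.
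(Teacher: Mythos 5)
Your proposal is correct and follows essentially the same route as the paper's proof: deduce the two‑sided bound directly from \eqref{mgflb}--\eqref{mgfub}, reduce to controlling $E_\w S/E_\w \bar{T}^{(n)}_\nu$ and $Var_\w \bar{T}^{(n)}_\nu/(E_\w \bar{T}^{(n)}_\nu)^2-1$ on the event $\{M_1>n^{(1-\e)/s}\}$, and invoke Corollary \ref{Sbig} with $\d=\frac{1-2\e}{5s}$ and Lemma \ref{VarET2compare} with $\d=\frac{\e}{s}$, checking that the resulting exponents beat $-1-\e'$. Your explicit remark that the upper endpoint of the interval must be read as $+\infty$ on the range of $\l$ where the denominator in \eqref{mgfub} ceases to be positive is a small point the paper leaves implicit, but it does not change the argument.
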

\begin{proof}
Recall from \eqref{mgflb} and \eqref{mgflb} that 
\[
\left(1-\l \frac{E_\w S}{E_\w \bar{T}^{(n)}_\nu} \right) \frac{1}{1 + \l }  \leq \phi_{1,n}(\l) \leq \frac{1}{1+\l - (\l+\l^2) \frac{E_\w S}{E_\w \bar{T}^{(n)}_\nu } - \frac{\l^2}{2} \left(\frac{Var_\w \bar{T}^{(n)}_\nu}{(E_\w \bar{T}^{(n)}_\nu)^2 } - 1 \right)}
\]
for all $\l\geq 0$. Therefore
\begin{align*}
&Q\left( \exists \l \geq 0: \phi_{1,n}(\l) \notin \left[ \frac{1-\l n^{-\e /s}}{1+\l} , \frac{1}{1+\l-\left(\l+3 \l^2 /2\right)n^{-\e/s} } \right],\: M_1 > n^{(1-\e)/s} \right) \\
&\qquad \leq Q\left( \frac{E_\w S}{E_\w \bar{T}^{(n)}_\nu} \geq n^{-\e/s}, \quad M_1 \geq n^{(1-\e)/s}  \right) + Q\left( \frac{Var_\w \bar{T}^{(n)}_\nu }{(E_\w \bar{T}^{(n)}_\nu)^2} - 1 \geq n^{-\e/s} , \quad M_1 \geq n^{(1-\e)/s} \right)
\end{align*}
Now, since $E_\w \bar{T}^{(n)}_\nu \geq M_1$ we have
\[
Q\left( \frac{E_\w S}{E_\w \bar{T}^{(n)}_\nu} \geq n^{-\e/s}, \: M_1 \geq n^{(1-\e)/s}  \right)  \leq Q\left( E_\w S \geq n^{(1-2\e)/s}, \: M_1 \geq n^{(1-\e)/s}  \right) = o\left( n^{-(6-8\e)/5} \right),
\]
where the last equality is from Corollary \ref{Sbig}. Also, by Lemma \ref{VarET2compare} we have 
\[
Q\left( \frac{Var_\w \bar{T}^{(n)}_\nu }{(E_\w \bar{T}^{(n)}_\nu)^2} - 1 \geq n^{-\e/s} , \quad M_1 \geq n^{(1-\e)/s} \right) = o\left( n^{-2+4\e} \right).
\]
Then, since $-2+4\e < \frac{-6+8\e}{5}$ when $\e<\frac{1}{8}$ the lemma is proved. 
\end{proof} 
\begin{cor} \label{explimit}
Let $\e<\frac{1}{8}$. Then $P-a.s.$, for any sequence $i_k=i_k(\w)$ such that $i_k\in (n_{k-1},n_k]$ and $M_{i_k} > d_k^{(1-\e)/s}$ we have
\begin{equation}
\lim_{k\ra\infty} \phi_{i_k,d_k}(\l) = \frac{1}{1+\l},\quad \forall \l\geq 0, \label{mgfbounds}
\end{equation}
and thus
\begin{equation}
\lim_{k\ra\infty} P_\w^{\nu_{i_k-1}} \left( \bar{T}^{(d_k)}_{\nu_{i_k}} > x \mu_{i_k,d_k,\w} \right) =  \Psi(x), \quad \forall x\in \R. \label{expl}
\end{equation}
\end{cor}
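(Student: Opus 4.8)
The plan is to promote the single-block estimate of Lemma~\ref{mgflem} to an almost-sure statement holding \emph{simultaneously} for every ``large'' block in $(n_{k-1},n_k]$, by a union bound and Borel--Cantelli, and then to invoke the continuity theorem for Laplace transforms. Note that the genuinely hard analysis is already contained in Lemma~\ref{mgflem} (and in Corollary~\ref{Sbig} and Lemma~\ref{VarET2compare} behind it); what remains here is essentially bookkeeping.

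First I would fix $\e<1/8$, put $\e':=\frac{1-8\e}{5}>0$ as in Lemma~\ref{mgflem}, and let $A_k$ be the event that some block $i\in(n_{k-1},n_k]$ with $M_i>d_k^{(1-\e)/s}$ has $\phi_{i,d_k}(\l)$ lying outside the interval $\bigl[\frac{1-\l d_k^{-\e/s}}{1+\l},\ \frac{1}{1+\l-(\l+3\l^2/2)d_k^{-\e/s}}\bigr]$ for some $\l\ge 0$. For all $k$ large we have $n_{k-1}>b_{d_k}$, so each of the at most $d_k$ events indexed by $i\in(n_{k-1},n_k]$ is measurable with respect to $\s(\w_j:j\ge 0)$, on which $P=Q$, and under $Q$ the blocks between consecutive ladder locations are i.i.d.; hence by a union bound and Lemma~\ref{mgflem} applied with $n=d_k$,
\[
P(A_k)=Q(A_k)\ \le\ d_k\cdot o\!\left(d_k^{-1-\e'}\right)\ =\ o\!\left(d_k^{-\e'}\right).
\]
Since $d_k\sim n_k=2^{2^k}$ grows doubly exponentially, $\sum_k P(A_k)<\infty$, so by Borel--Cantelli, $P$-a.s.\ only finitely many $A_k$ occur.

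On $A_k^c$ every block $i\in(n_{k-1},n_k]$ with $M_i>d_k^{(1-\e)/s}$ satisfies, for all $\l\ge 0$,
\[
\frac{1-\l d_k^{-\e/s}}{1+\l}\ \le\ \phi_{i,d_k}(\l)\ \le\ \frac{1}{1+\l-(\l+3\l^2/2)\,d_k^{-\e/s}},
\]
and for each fixed $\l$ both ends tend to $\frac{1}{1+\l}$ as $k\ra\infty$ (the right-hand denominator being eventually positive). Therefore, $P$-a.s., for any sequence $i_k=i_k(\w)$ with $i_k\in(n_{k-1},n_k]$ and $M_{i_k}>d_k^{(1-\e)/s}$, the sandwich forces $\lim_{k\ra\infty}\phi_{i_k,d_k}(\l)=\frac{1}{1+\l}$ for every $\l\ge 0$, which is \eqref{mgfbounds}. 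Since $\l\mapsto\frac{1}{1+\l}$ is the Laplace transform of an exponential law of mean one, the continuity theorem for Laplace transforms of probability measures on $[0,\infty)$ shows that the quenched law of $\bar T^{(d_k)}_{\nu_{i_k}}/\mu_{i_k,d_k,\w}$ under $P_\w^{\nu_{i_k-1}}$ converges weakly to that exponential law; as the limiting distribution function is continuous on $\R$, convergence holds at every point, which gives \eqref{expl}.

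The only point that needs care is the interplay between the quantitative rate in Lemma~\ref{mgflem} and the union bound: one needs the error there to be $o(n^{-1-\e'})$ with $\e'$ \emph{strictly} positive (which is precisely why $\e<1/8$ is assumed), so that after summing over the $\sim d_k$ blocks $P(A_k)$ is still summable along the doubly-exponential sequence $n_k=2^{2^k}$; and one must make sure the relevant events are measurable with respect to the environment to the right of the origin, so that the i.i.d.\ block structure of $Q$ may be used in place of $P$. The randomness of the index $i_k$ is harmless, since the conclusion on $A_k^c$ is uniform over all admissible blocks.
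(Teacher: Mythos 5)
Your proof is correct and takes essentially the same route as the paper: apply Lemma~\ref{mgflem} with $n=d_k$, use a union bound over the $\leq d_k$ blocks in $(n_{k-1},n_k]$, observe that for large $k$ each $\phi_{i,d_k}$ depends only on the environment to the right of $0$ so $P=Q$ on the relevant events, sum over $k$ using the doubly-exponential growth of $d_k$, invoke Borel--Cantelli, and then pass to the limit via the sandwich bounds and the continuity theorem for Laplace transforms. The bookkeeping (measurability, rate $o(n^{-1-\e'})$ surviving the union bound, uniformity over admissible $i$) is exactly what the paper does, so no comparison is needed.
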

\begin{proof}
For $i\in(n_{k-1},n_k]$ and all $k$ large enough $\phi_{i,d_k}(\l)$ only depends on the environment to the right of zero, and thus has the same distribution under $P$ and $Q$. Therefore, Lemma \ref{mgflem} gives that there exists an $\e'>0$ such that 
\begin{align*}
&P\left( \exists i\in (n_{k-1},n_k], \l\geq 0: \phi_{i,d_k}(\l) \notin \left[ \frac{1-\l d_k^{-\e /s}}{1+\l} , \frac{1}{1+\l-\left(\l+\frac{3 \l^2}{2}\right)d_k^{-\e/s} } \right],\: M_i > d_k^{(1-\e)/s} \right) \\
&\quad \leq d_k Q\left( \exists \l\geq 0: \phi_{1,d_k}(\l) \notin \left[ \frac{1-\l d_k^{-\e /s}}{1+\l} , \frac{1}{1+\l-\left(\l+\frac{3 \l^2}{2}\right)d_k^{-\e/s} } \right],\: M_1 > d_k^{(1-\e)/s} \right) \\
&\quad = o\left( d_k^{-\e'} \right).
\end{align*}
Since this last term is summable in $k$, the Borel-Cantelli Lemma gives that $P-a.s.$ there exists a $k_0=k_0(\w)$ such that for all $k\geq k_0$ we have
\[
i\in (n_{k-1},n_k] \text{ and } M_i\geq d_k^{(1-\e)/s} \Ra \phi_{i,d_k}(\l) \in \left[ \frac{1-\l d_k^{-\e /s}}{1+\l} , \frac{1}{1+\l-\left(\l+\frac{3 \l^2}{2}\right)d_k^{-\e/s} } \right] \quad \forall \l\geq 0,
\]
which proves \eqref{mgfbounds}. Then, \eqref{expl} follows immediately because $\frac{1}{1+\l}$ is the Laplace transform of an exponential disribution. 
\end{proof}
\end{subsection}

\begin{subsection}{Quenched Exponential Limits Along a Subsequence}
In the previous subsection we showed that the time to cross a single large block is approximately exponential. In this section we show that there are subsequences in the environment where the crossing time of a single block dominates the crossing times of all the other blocks. In this case the crossing time of all the blocks is approximately exponentially distributed. 
Recall the definition of $M_i$ in \eqref{Mdef}. For any integer $n\geq 1$, and constants $C>1$ and $\eta>0$, define the event
\[
\mathcal{D}_{n,C,\eta}:= \left\{ \exists i\in\left[1,\eta n \right]: M_i^2 \geq C \sum_{j: i\neq j\leq n} \s_{j,n,\w}^2  \right\}
\]
\begin{lem} \label{onebigblock}
Assume $s<2$. Then for any $C>1$ and $\eta>0$ we have $\liminf_{n\ra\infty} Q\left( \mathcal{D}_{n,C,\eta} \right) > 0$. 
\end{lem}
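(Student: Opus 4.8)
We want to show that with probability bounded away from zero (as $n\to\infty$), among the first $\eta n$ blocks there is one whose squared maximum $M_i^2$ dominates $C$ times the sum of all the other block variances $\s_{j,n,\w}^2$. The natural strategy is a second-moment/truncation argument: show that the contribution of the largest block is of order $n^{2/s}$ and can be made arbitrarily large on a positive-probability event, while the sum of the remaining variances is also of order $n^{2/s}$ but with a distribution that puts noticeable mass at the small end.

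First I would use the tail estimate \eqref{Mtail}, $Q(M_i > x)\sim C_3 x^{-s}$, together with independence of the blocks under $Q$, to understand $\max_{i\le \eta n} M_i^2$. Since the $M_i$ are i.i.d. under $Q$ with a regularly varying tail of index $s$, the rescaled maximum $(\eta n)^{-2/s}\max_{i\le \eta n} M_i^2$ converges in distribution to a (Fréchet-type) nondegenerate law, and in particular for any $A>0$ we have $\liminf_n Q\bigl(\max_{i\le \eta n} M_i^2 \ge A n^{2/s}\bigr) > 0$; moreover this can be upgraded to: the event that exactly one block in $[1,\eta n]$ has $M_i^2 \ge A n^{2/s}$ and all the others have $M_j^2 \le \e n^{2/s}$ has probability bounded below, for suitable $A$ large and $\e$ small. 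This is the standard "one big jump" picture for i.i.d. heavy-tailed sums.

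Next I would control $\sum_{j\ne i, j\le n}\s_{j,n,\w}^2$ on that event. The point is that the reflections defining $\bar T^{(n)}$ make $\s_{j,n,\w}^2$ depend only on a bounded window ($b_n$ blocks) of the environment, so after removing the single exceptional block $i$ and a window of $b_n = \lfloor \log^2 n\rfloor$ blocks around it, the remaining sum is essentially a sum of i.i.d.\ (across well-separated blocks) heavy-tailed terms; by Theorem \ref{Varstable} (or rather the scaling it encodes, $Var_\w T_{\nu_n}/n^{2/s}\Rightarrow L_{s/2,b}$) this sum is $O_P(n^{2/s})$, and in fact $Q\bigl(\sum_{j\le n}\s_{j,n,\w}^2 \le B n^{2/s}\bigr) = L_{s/2,b}(B) + o(1)$, which is close to $1$ for $B$ large. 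The blocks near $i$ that we excised contribute at most $b_n\cdot\max_j \s_{j,n,\w}^2$; on our event the removed maxima are all $\le \e n^{2/s}$ except we must also control the variance of block $i$ itself relative to $M_i$ — but block $i$ is the one we are keeping as the dominator, so that is fine — and control the at most $b_n$ neighbors of block $i$, which via Theorem \ref{VETtail} contribute $o(n^{2/s})$ with overwhelming probability. Intersecting: on an event of probability bounded below we simultaneously have $M_i^2 \ge A n^{2/s}$ and $\sum_{j\ne i\le n}\s_{j,n,\w}^2 \le (B + o(1)) n^{2/s}$, and choosing $A > C B$ gives $M_i^2 \ge C\sum_{j\ne i}\s_{j,n,\w}^2$, i.e.\ $\mathcal{D}_{n,C,\eta}$ occurs. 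Taking $\liminf$ finishes the proof.

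\textbf{The main obstacle.} The delicate point is not the heavy-tail heuristics — those are routine — but making the dependence structure precise: $\s_{j,n,\w}^2$ is not independent of $M_i$ for $j$ within $b_n$ blocks of $i$, and the crossing-time variance of a block genuinely depends on the environment to its left (through the $W_j$'s), which is why the reflected walk $\bar T^{(n)}$ was introduced in the first place. I would handle this by conditioning on the location $i$ of the exceptional block, partitioning $[1,n]\setminus\{i\}$ into the $b_n$-window around $i$ (controlled crudely by Theorem \ref{VETtail}, giving $o(n^{2/s})$) and the rest (a genuine i.i.d.-across-separated-blocks sum, controlled by Theorem \ref{Varstable}), and noting that $\mathcal{S}_{\eta,d_k,a_k}$-style independence arguments from the proof of Lemma \ref{smbklemma} apply verbatim. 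A secondary subtlety is that we want a \emph{single} dominating block, so the "one big jump" event must be phrased as "exactly one index $i\in[1,\eta n]$ has $M_i$ large," which for i.i.d.\ regularly varying variables has probability $\sim (\text{const})>0$ in the limit; this is standard but should be stated carefully so that the conditioning on $i$ in the previous step is legitimate.
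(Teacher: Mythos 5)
Your overall strategy is the same as the paper's: isolate one block in $[1,\eta n]$ with $M_i$ of order $n^{1/s}$, force the other $M_j$ in a $b_n$-window around $i$ to be small, control the far blocks by Theorem \ref{Varstable}, and exploit independence of $M_i$ from $\s_{j,n,\w}^2$ for $j\notin[i,i+b_n]$; the paper also evaluates $Q(\mathcal{D}_{n,C,\eta})$ as a disjoint sum over $i$ rather than phrasing it as ``exactly one big jump,'' but these come to the same thing.

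There is a genuine gap, though, in your treatment of the $b_n$ blocks adjacent to $i$. You propose that on the event $\{M_j^2\le\e n^{2/s}\text{ for }j\neq i\}$ these neighbors ``via Theorem \ref{VETtail} contribute $o(n^{2/s})$ with overwhelming probability.'' Two problems. First, Theorem \ref{VETtail} gives the \emph{unconditional} tail $Q(\s_{j,n,\w}^2>x n^{2/s})\sim K_\infty x^{-s/2}/n$; it says nothing about the distribution of $\s_{j,n,\w}^2$ for $j\in(i,i+b_n]$ \emph{conditioned on $M_i$ being of order $n^{1/s}$}, which is exactly the regime you are in. The reflected variance $\s_{j,n,\w}^2$ genuinely depends on $M_i$ through the products $\Pi$ spanning block $i$, and the conditional tail is fatter. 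Second, even granting a deterministic bound, taking $\e>0$ \emph{fixed} does not suffice: from the explicit formula the paper derives,
\[
\s_{j,n,\w}^2\le \bigl(\nu_j-\nu_{j-1-b_n}\bigr)^4\Bigl(12M_j+4M_j^2+8M_j\!\!\sum_{k=j-b_n}^{j}\!\!M_k\Bigr),
\]
the neighbor contribution carries a $\mathrm{poly}(b_n)$ factor, so with $M_j^2\le\e n^{2/s}$ and $M_i\asymp n^{1/s}$ you only get $\mathrm{poly}(b_n)\,(\e+\sqrt{\e})\,n^{2/s}$, which is \emph{not} $o(n^{2/s})$ for fixed $\e$. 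The paper avoids both problems at once by instead imposing $M_j\le n^{(1-\e)/s}$ on the window (i.e.\ a polynomially shrinking truncation rather than a fixed $\e$) together with the upper bound $M_i\le 2n^{1/s}$, which via the displayed formula gives the deterministic bound $\s_{j,n,\w}^2\le 80\,b_n^9\,n^{(2-\e)/s}=o(n^{2/s})$ on the good event, with no probabilistic estimate needed for the window at all. Once you replace the fixed-$\e$ truncation and the appeal to Theorem \ref{VETtail} by this explicit deterministic control, the rest of your argument (Fréchet scaling of $\max_i M_i^2$ from \eqref{Mtail}, Theorem \ref{Varstable} for the far blocks, and the independence/separation argument) goes through and matches the paper.
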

\begin{proof}
First, note that since $\s_{i,n,\w}^2 \geq M_i^2$ and $C>1$ we have 
\begin{equation}
Q\left(\mathcal{D}_{n,C,\eta}\right) = \sum_{i=1}^{\eta n} Q\left( M_i^2 \geq C \sum_{j: i\neq j\leq n} \s_{j,n,\w}^2 \right). \label{disjoint}
\end{equation}
Thus, we want to get a lower bound on $Q\left( M_i^2 \geq C \sum_{j:i\neq j\leq n} \s_{j,n,\w}^2 \right)$ that is uniform in $i$. 
The following formula for the quenched variance of $\bar{T}^{(n)}_\nu$ can be deduced from \eqref{qvformula} by setting $\rho_{\nu_{-b_n}=0}$:
\begin{align*}
Var_\w \bar{T}^{(n)}_\nu &= 4\sum_{j=0}^{\nu-1}(W_{\nu_{-b_n}+1,j}+W_{\nu_{-b_n}+1,j}^2) + 8\sum_{j=0}^{\nu-1}\sum_{i=\nu_{-b_n}+1}^j \Pi_{i+1,j}(W_{\nu_{-b_n}+1,i}+W_{\nu_{-b_n}+1,i}^2)\\
&\leq 4\sum_{j=0}^{\nu-1}(W_{\nu_{-b_n}+1,j}+W_{\nu_{-b_n}+1,j}^2) + 8\sum_{j=0}^{\nu-1}\sum_{i=\nu_{-b_n}+1}^j W_{\nu_{-b_n}+1,j}(1+W_{\nu_{-b_n}+1,i})\\
&\leq 4\sum_{j=0}^{\nu-1}(W_{\nu_{-b_n}+1,j}+W_{\nu_{-b_n}+1,j}^2) + 8\left( \sum_{j=0}^{\nu-1} W_{\nu_{-b_n}+1,j} \right)\left( \sum_{i=\nu_{-b_n}+1}^{\nu-1} (1+W_{\nu_{-b_n}+1,i}) \right)  ,
\end{align*}
where the first inequality is because $W_{\nu_{-b_n}+1,j} = W_{i+1,j}+\Pi_{i+1,j}W_{\nu_{-b_n}+1,i}$. 
Next, note that if $\nu_{k-1}\leq j < \nu_{k}$ for some $k>-b_n$, then 
\[
W_{\nu_{-b_n}+1,j} = \sum_{l=\nu_{-b_n}+1}^j \Pi_{l,j} = \sum_{l=\nu_{-b_n}+1}^{\nu_{k-1}-1} \Pi_{l,\nu_{k-1}-1}\Pi_{\nu_{k-1},j} + \sum_{l=\nu_{k-1}}^j \Pi_{l,j} \leq (\nu_k-\nu_{-b_n}) M_k,
\]
where the last inequality is because, under $Q$, $\Pi_{l,\nu_{k-1}-1} < 1$ for all $l<\nu_{k-1}$.
Therefore,
\begin{align*}
Var_\w \bar{T}^{(n)}_\nu &\leq 4 \nu_1 \left( (\nu_1-\nu_{-b_n})M_1 +  (\nu_1-\nu_{-b_n})^2M_1^2 \right) \\
&\qquad + 8 \left(\nu_1 (\nu_1-\nu_{-b_n})M_1 \right)
\left( (\nu_1-\nu_{-b_n}) + \sum_{i=-b_n+1}^{1} (\nu_k-\nu_{k-1})(\nu_k-\nu_{-b_n}) M_k \right)\\
&\leq \left( \nu_1-\nu_{-b_n} \right)^4 \left( 12 M_1 + 4 M_1^2 + 8 M_1 \sum_{k=-b_n+1}^1 M_k \right). 
\end{align*}
Similarly, we have that 
$\s_{j,n,\w}^2 \leq \left( \nu_j - \nu_{j-1-b_n} \right)^4 \left( 12 M_j + 4  M_j^2 + 8 M_j \sum_{k=j-b_n}^j M_k \right)$ $Q-a.s.$ for any $j$. 
Now, define the events
\begin{equation}
F_n:= \bigcap_{j\in(-b_n,n]} \left\{ \nu_j-\nu_{j-1} \leq b_n \right\}, \quad\text{and}\quad G_{i,n,\e}:= \bigcap_{j\in[i-b_n,i+b_n]\backslash\{i\}} \left\{ M_j \leq n^{(1-\e)/s} \right\} \label{FGdef}
\end{equation}
Then, on the event $F_n\cap G_{i,n,\e} \cap \left\{ M_i \leq 2 n^{1/s} \right\}$ we have for $j\in (i,i+b_n]$ that
\begin{align*}
\s_{j,n,\w}^2 &\leq b_n^4(b_n+1)^4 \left( 12 n^{(1-\e)/s} + 4 n^{(2-2\e)/s} + 8 n^{(1-\e)/s}(b_n n^{(1-\e)/s} + 2 n^{1/s})\right)\\
&\leq  b_n^5(b_n+1)^4 \left( 12 n^{(1-\e)/s} + 12 n^{(2-2\e)/s} + 16 n^{(2-\e)/s} \right) \leq 80 b_n^9 n^{(2-\e)/s},
\end{align*}
where the last inequality holds for all $n$ large enough.
Therefore, for all $n$ large enough
\begin{align*}
& Q\left( M_i^2 \geq C \sum_{j:i\neq j\leq n} \!\!\!\! \s_{j,n,\w}^2 \right) \\
&\qquad \geq Q\left(4 n^{2/s} \geq  M_i^2 \geq C \sum_{j:i\neq j\leq n} \s_{j,n,\w}^2 , \ F_n, \ G_{i,n,\e} \right)\\
&\qquad \geq Q\left( 4 n^{2/s} \geq M_i^2 \geq C \left( \sum_{j\in[1,n]\backslash [i,i+b_n]} \!\!\!\!\!\! \s_{j,n,\w}^2 + 80 b_n^9 n^{(2-\e)/s} \right) , \ F_n, \ G_{i,n,\e} \right)\\
&\qquad \geq Q\left( M_i \in[n^{1/s},2n^{1/s}] , \  \nu_i-\nu_{i-1} \leq b_n \right) \\
&\qquad \qquad \times Q\left(  \sum_{j\in[1,n]\backslash [i,i+b_n]} \!\!\!\!\!\! \s_{j,n,\w}^2 + 80 b_n^9 n^{(2-\e)/s} \leq \frac{n^{2/s}}{C},  \ \tilde{F}_n, \ G_{i,n,\e} \right),
\end{align*}
where $\tilde{F}_n := \{ \nu_j-\nu_{j-1} \leq b_n, \; \forall j\in(-b_n, n]\backslash\{k\} \} \supset F_n$. 
Note that in the last inequality we used that $\s_{j,n,\w}^2$ is independent of $M_i$ for $j\notin[i,i+b_n]$. Also, note that we can replace $\tilde{F}_n$ by $F_n$ in the last line above because it will only make the probability smaller. Then, since $\sum_{j\in[1,n]\backslash [i,i+b_n]} \s_{j,n,\w}^2 \leq Var_\w T_{\nu_n}$ we have
\begin{align}
&Q\left( M_i^2 \geq C \sum_{j:i\neq j\leq n} \s_{j,n,\w}^2 \right) \nonumber \\
&\qquad\geq Q\left( M_1 \in [ n^{1/s}, 2n^{1/s} ] , \  \nu \leq b_n \right)Q\left( Var_\w T_{\nu_n} \leq n^{2/s}C^{-1} -40 b_n^7 n^{(2-\e)/s},  \ F_n, \ G_{i,n,\e} \right) \nonumber \\
&\qquad\geq \left( Q(M_1 \in [ n^{1/s}, 2n^{1/s} ] ) - Q(\nu > b_n)   \right) \nonumber \\
&\qquad\qquad \times \left( Q\left(  Var_\w T_{\nu_n} \leq n^{2/s} ( C^{-1} - 40 b_n^7 n^{-\e/s} )  \right) - Q(F_n^c) - Q(G_{i,n,\w}^c)  \right) \nonumber \\
&\qquad\sim C_3 (1-2^{-s})\frac{1}{n} \, L_{\frac{s}{2}, b}\left(C^{-1}\right), \label{obblb}
\end{align}
where the asymptotics in the last line are from \eqref{nutail}, \eqref{Mtail}, and Theorem \ref{Varstable}, as well as the fact that $Q(F_n^c) + Q(G_{i,n,\w}^c) \leq (n+b_n)Q(\nu> b_n) + 2b_n Q(M_1 > n^{(1-\e)/s}) = \bigo\left(n e^{-C_2 b_n} \right) + o(n^{-1+2\e})$ due to \eqref{nutail} and \eqref{Mtail}. Combining \eqref{disjoint} and \eqref{obblb} finishes the proof.
\end{proof}
\begin{cor} \label{subseq}
Assume $s<2$. Then for any $\eta\in(0,1)$, $P-a.s.$ there exists a subsequence $n_{k_m}= n_{k_m}(\w,\eta)$ of $n_k=2^{2^k}$ such that for $\a_m, \b_m,$ and $\gamma_m$ defined as in \eqref{abgdef}
we have that 
\begin{equation}
\exists i_m = i_m(\w,\eta) \in (\a_m, \b_m]:\quad M_{i_m}^2 \geq m \!\!\!\! \sum_{j\in(\a_m, \gamma_m]\backslash\{i_m\}} \!\!\!\! \s_{j,d_{k_m},\w}^2 \, .\label{sscond}
\end{equation}
\end{cor}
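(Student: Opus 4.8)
The plan is to mimic the proof of Corollary~\ref{smallblocks2}, using Lemma~\ref{onebigblock} in place of Lemma~\ref{smbklemma}, and then to pass from a family of almost sure statements (one for each fixed constant) to a single subsequence by a diagonal argument.

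For an integer $C\geq 1$ and $k\geq 1$, let $\mathcal{D}_k^{(C)}$ be the translate by $n_{k-1}$ ladder locations of the event $\mathcal{D}_{d_k,C,\eta}$ of Lemma~\ref{onebigblock}; that is,
\[
\mathcal{D}_k^{(C)}:=\left\{\exists\, i\in(n_{k-1},\,n_{k-1}+\lfloor\eta d_k\rfloor]\ :\ M_i^2\geq C\!\!\sum_{j\in(n_{k-1},n_k]\setminus\{i\}}\!\!\s_{j,d_k,\w}^2\right\}.
\]
Because of the reflections, $\mathcal{D}_k^{(C)}$ depends only on the environment between ladder locations $n_{k-1}-b_{d_k}$ and $n_k$, so exactly as in Corollary~\ref{smallblocks2} we have, for all $k$ large (uniformly in $C$), that this event does not depend on the environment to the left of the origin, whence $P(\mathcal{D}_k^{(C)})=Q(\mathcal{D}_k^{(C)})=Q(\mathcal{D}_{d_k,C,\eta})$ by the translation invariance of $Q$, and that $\{\mathcal{D}_{2k}^{(C)}\}_k$ is an independent sequence of events because $n_{k-1}-b_{d_k}>n_{k-2}$ for all $k$ large. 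Note also the monotonicity $\mathcal{D}_k^{(C)}\subseteq\mathcal{D}_k^{(C')}$ whenever $C\geq C'$.

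Now fix an integer $C\geq 2$. Since $d_{2k}\ra\infty$, Lemma~\ref{onebigblock} yields $\liminf_k Q(\mathcal{D}_{d_{2k},C,\eta})\geq\liminf_{n\ra\infty}Q(\mathcal{D}_{n,C,\eta})>0$, hence $\sum_k P(\mathcal{D}_{2k}^{(C)})=\infty$, so the second Borel-Cantelli Lemma shows that $P$-a.s.\ infinitely many of the $\mathcal{D}_{2k}^{(C)}$, and a fortiori infinitely many of the $\mathcal{D}_k^{(C)}$, occur; by the monotonicity the same holds for $C=1$. Intersecting over the countably many $C\in\N$, we obtain that $P$-a.s.\ for every $C\in\N$ there are infinitely many $k$ with $\mathcal{D}_k^{(C)}$. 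On this almost sure event, define $k_m$ recursively by $k_0:=0$ and $k_m:=\min\{k>k_{m-1}:\mathcal{D}_k^{(m)}\text{ occurs}\}$; then $n_{k_m}$ is a random subsequence of $n_k=2^{2^k}$, and, reading off the definition of $\mathcal{D}_{k_m}^{(m)}$ with $\a_m,\b_m,\gamma_m$ as in \eqref{abgdef}, the occurrence of $\mathcal{D}_{k_m}^{(m)}$ is precisely \eqref{sscond}, with $i_m\in(\a_m,\b_m]$ the index witnessed by that event. The only genuinely new ingredient beyond Lemma~\ref{onebigblock} is this diagonalization over $C$, which is what forces the constant along the extracted subsequence to grow without bound rather than stay fixed; the reflection bookkeeping behind the independence of $\{\mathcal{D}_{2k}^{(C)}\}_k$ and the identity $P(\mathcal{D}_k^{(C)})=Q(\mathcal{D}_k^{(C)})$ is identical to that in the proof of Corollary~\ref{smallblocks2}.
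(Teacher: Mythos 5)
Your argument is correct and follows the same route as the paper: translate $\mathcal{D}_{d_k,C,\eta}$ by $n_{k-1}$ ladder locations, use the reflections to get independence of $\{\mathcal{D}_{2k}^{(C)}\}_k$ and $P=Q$ on these events for large $k$, apply the second Borel--Cantelli Lemma via Lemma~\ref{onebigblock}, then intersect over countably many $C$ and diagonalize. The only difference is that you spell out the diagonalization and the monotonicity $\mathcal{D}_k^{(C)}\subseteq\mathcal{D}_k^{(C')}$ for $C\geq C'$ explicitly, which the paper leaves implicit in the sentence ``there is a subsequence $k_m$ such that $\omega\in\mathcal{D}_{k_m,m,\eta}$ for all $m$.''
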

\begin{proof}
Define the events 
\[
\mathcal{D}_{k,C,\eta}':= \left\{ \exists i\in (n_{k-1}, n_{k-1}+\eta d_k]: M_i^2 \geq C \sum_{j\in(n_{k-1},n_k]\backslash\{i\}  } \s_{j,d_k,\w}^2  \right\}.
\]
Note that since $Q$ is invariant under shifts of the $\nu_i$, $Q(\mathcal{D}_{k,C,\eta}') = Q(\mathcal{D}_{d_k,C,\eta})$. Also, due to the reflections of the random walk the event $\mathcal{D}_{k,C,\eta}'$ only depends on the environment between $\nu_{n_{k-1}-b_{d_k}}$ and $\nu_{n_k}$. Thus, for $k$ large enough $\mathcal{D}_{k,C,\eta}'$ only depends on the environment to the right of zero and therefore $P( \mathcal{D}_{k,C,\eta}') =  Q(\mathcal{D}_{k,C,\eta}') = Q(\mathcal{D}_{d_{k},C,\eta})$. Therefore $\liminf_{k\ra\infty} P( \mathcal{D}_{k,C,\eta}') >0$. Also, since $n_{k-1}-b_{d_k} > n_{k-2}$ for all $k\geq 4$, we have that $\{ \mathcal{D}_{2k,C,\eta}' \}_{k=2}^\infty$ is an independent sequence of events. Thus, we get that for any $C>1$ and $\eta\in(0,1)$, infinitely many of the events $\mathcal{D}_{k,C,\eta}$ occur $P-a.s.$ Therefore, $P-a.s.$ there is a subsequence $k_m=k_m(\w)$ such that $\w\in \mathcal{D}_{k_m,m,\eta}$ for all $m$. In particular, for this subsequence $k_m$ we have that \eqref{sscond} holds.
\end{proof}
\begin{thm}\label{Tnexplimit}
Assume $s<2$. Then for any $\eta\in(0,1)$, $P-a.s.$ there exists a subsequence $n_{k_m}=n_{k_m}(\w, \eta)$ of $n_k=2^{2^k}$ such that for $\a_m,\beta_m$ and $\gamma_m$ defined as in \eqref{abgdef} and any sequence $x_m \in \left(\nu_{\beta_m} , \nu_{\gamma_m} \right]$  we have
\[
\lim_{m\ra\infty} P_\w^{\nu_{\a_m}} \left( \frac{\bar{T}_{x_m}^{(d_{k_m})} - E_\w^{\nu_{\a_m}} \bar{T}_{x_m}^{(d_{k_m})} }{\sqrt{v_{k_m,\w}}} \leq x \right) = \Psi(x+1), \quad \forall x\in\R.
\]
\end{thm}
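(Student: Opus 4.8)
My plan is the exponential counterpart of the argument for Theorem~\ref{Tngaussian}: rather than balancing the variances of all the blocks and invoking Lindeberg--Feller, I would locate a single block whose crossing time dominates all the others (Corollary~\ref{subseq}) and show that this one block alone produces the exponential limit (Corollary~\ref{explimit}), while everything else is negligible. Concretely, fix $\e\in(0,\tfrac18)$ and $\eta\in(0,1)$, and let $n_{k_m}$, the sequences $\a_m,\b_m,\gamma_m$ of \eqref{abgdef}, and the dominating blocks $i_m=i_m(\w,\eta)\in(\a_m,\b_m]$ obeying \eqref{sscond} be as produced, $P$-a.s., by Corollary~\ref{subseq}. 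Since $x_m>\nu_{\b_m}\geq\nu_{i_m}$, under $P_\w^{\nu_{\a_m}}$ the walk $\bar X^{(d_{k_m})}$ crosses each of the blocks $\a_m+1,\dots,\b_m$ in full, and these crossing times are independent with means $\mu_{i,d_{k_m},\w}$ and variances $\s_{i,d_{k_m},\w}^2$, so $E_\w^{\nu_{\a_m}}\bar T_{x_m}^{(d_{k_m})}$ decomposes block-by-block and I would write
\[
\frac{\bar T_{x_m}^{(d_{k_m})}-E_\w^{\nu_{\a_m}}\bar T_{x_m}^{(d_{k_m})}}{\sqrt{v_{k_m,\w}}}=A_m+B_m,
\]
where $A_m:=\bigl(\bar T_{\nu_{i_m}}^{(d_{k_m})}-\bar T_{\nu_{i_m-1}}^{(d_{k_m})}-\mu_{i_m,d_{k_m},\w}\bigr)\big/\sqrt{v_{k_m,\w}}$ is the contribution of the dominating block and $B_m$ collects the remaining blocks of $(\a_m,\b_m]$ and the overshoot from $\nu_{\b_m}$ to $x_m$, each suitably centered. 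The goal is then that $A_m$ converges in law to a centered mean-one exponential and $B_m\to0$ in $P_\w$-probability.

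The step I expect to be the main obstacle is checking that the dominating block is genuinely ``large'', i.e.\ $M_{i_m}>d_{k_m}^{(1-\e)/s}$ for all large $m$, $P$-a.s.; this is precisely what is needed to apply Corollary~\ref{explimit} and Lemma~\ref{VarET2compare} to $i_m$. Using $M_{i_m}^2\leq\s_{i_m,d_{k_m},\w}^2$ together with \eqref{sscond} one obtains $\s_{i_m,d_{k_m},\w}^2\leq v_{k_m,\w}\leq(1+\tfrac1m)\s_{i_m,d_{k_m},\w}^2$, so Corollary~\ref{vkasym} and Borel--Cantelli give $\s_{i_m,d_{k_m},\w}^2\geq\tfrac12 d_{k_m}^{2/s-\d}$ eventually. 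For the opposite estimate I would use the a.s.\ inequality $\s_{j,n,\w}^2\leq(\nu_j-\nu_{j-1-b_n})^4\bigl(12M_j+4M_j^2+8M_j\sum_{k=j-b_n}^{j}M_k\bigr)$ from the proof of Lemma~\ref{onebigblock}: by \eqref{sscond} every $M_k$ with $k\neq i_m$ in range is $\leq M_{i_m}$, by the Borel--Cantelli consequence of \eqref{nutail} all nearby block lengths are $\leq b_{d_{k_m}}$, and by that of \eqref{Mtail} the at most $b_{d_{k_m}}$ nearby blocks lying to the left of $\a_m=n_{k_m-1}=n_{k_m}^{1/2}$ have $M$-value at most $\a_m^{(1+\d)/s}=o\bigl(d_{k_m}^{(1-\e)/s}\bigr)$. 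If $M_{i_m}\leq d_{k_m}^{(1-\e)/s}$ these bounds would force $\s_{i_m,d_{k_m},\w}^2$ to be at most a polylogarithmic factor times $d_{k_m}^{(2-2\e)/s}$, contradicting the lower bound $\tfrac12 d_{k_m}^{2/s-\d}$ once $\d$ is chosen small relative to $\e$. Hence $M_{i_m}>d_{k_m}^{(1-\e)/s}$ eventually.

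Granting this, the rest is routine. From $\s_{i_m,d_{k_m},\w}^2\leq v_{k_m,\w}\leq(1+\tfrac1m)\s_{i_m,d_{k_m},\w}^2$, and from Lemma~\ref{VarET2compare} transferred to block $i_m$ by shift-invariance of $Q$ and then Borel--Cantelli (after multiplying the bad-event probability by $d_k$ and summing), I get $\sqrt{v_{k_m,\w}}/\s_{i_m,d_{k_m},\w}\to1$ and $\mu_{i_m,d_{k_m},\w}/\s_{i_m,d_{k_m},\w}\to1$, hence $\mu_{i_m,d_{k_m},\w}/\sqrt{v_{k_m,\w}}\to1$. Writing $A_m=\tfrac{\mu_{i_m,d_{k_m},\w}}{\sqrt{v_{k_m,\w}}}\bigl(\tfrac{\bar T_{\nu_{i_m}}^{(d_{k_m})}-\bar T_{\nu_{i_m-1}}^{(d_{k_m})}}{\mu_{i_m,d_{k_m},\w}}-1\bigr)$ and noting that under $P_\w^{\nu_{\a_m}}$ the increment $\bar T_{\nu_{i_m}}^{(d_{k_m})}-\bar T_{\nu_{i_m-1}}^{(d_{k_m})}$ has the same law as $\bar T_{\nu_{i_m}}^{(d_{k_m})}$ under $P_\w^{\nu_{i_m-1}}$ (identical reflection configuration), Corollary~\ref{explimit} shows the parenthetical factor converges in law to $\mathcal E-1$ with $\mathcal E$ exponential of mean one; by Slutsky $A_m$ converges in law to $\mathcal E-1$, whose distribution function is $\Psi(\cdot+1)$. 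For $B_m$, independence of the block crossing times together with the fact that the quenched variance of a hitting time does not decrease along the path gives $v_{k_m,\w}\,Var_\w(B_m)\leq\sum_{j\in(\a_m,\gamma_m]\setminus\{i_m\}}\s_{j,d_{k_m},\w}^2\leq M_{i_m}^2/m\leq v_{k_m,\w}/m$, so $Var_\w(B_m)\leq1/m$ and $B_m\to0$ in $P_\w$-probability by Chebyshev. Since $\Psi(\cdot+1)$ is continuous, a final application of Slutsky gives $\lim_{m\to\infty}P_\w^{\nu_{\a_m}}(A_m+B_m\leq x)=\Psi(x+1)$ for every $x\in\R$, which is the claim of Theorem~\ref{Tnexplimit}.
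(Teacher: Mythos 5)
Your proposal is correct and follows the same overall plan as the paper's proof: isolate the dominating block $i_m$ from Corollary~\ref{subseq}, kill the remaining blocks by Chebyshev using \eqref{sscond}, deduce $v_{k_m,\w}/\s_{i_m,d_{k_m},\w}^2\to1$ and $\s_{i_m,d_{k_m},\w}^2/\mu_{i_m,d_{k_m},\w}^2\to1$ (the latter by a union bound over $i$ in Lemma~\ref{VarET2compare} and Borel--Cantelli), and finish with Corollary~\ref{explimit} and Slutsky.

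The one place where you take a genuinely different route is in establishing that $M_{i_m}>d_{k_m}^{(1-\e)/s}$ eventually. The paper's argument is much shorter: it first shows, by \eqref{Mtail} and Borel--Cantelli, that $\max_{j\in(n_{k-1},n_k]}M_j>d_k^{(1-\e)/s}$ for all $k$ large, $P$-a.s.\ (this is \eqref{maxMlarge}), and then observes that \eqref{sscond} together with $\s_j^2\geq M_j^2$ forces $M_{i_m}$ to be that maximum (since $M_{i_m}^2\geq m\s_j^2\geq mM_j^2$ for $j\neq i_m$). You instead run a proof by contradiction, pushing a lower bound $\s_{i_m,d_{k_m},\w}^2\gtrsim d_{k_m}^{2/s-\d}$ through Corollary~\ref{vkasym} against the explicit pointwise upper bound on $\s_{j,n,\w}^2$ from the proof of Lemma~\ref{onebigblock}; this requires additional almost-sure control on $\max_{j\leq\a_m}M_j$ and on nearby block lengths, and care about which $\d$ works relative to $\e$. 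The argument is correct (all the needed estimates do hold after Borel--Cantelli along the super-exponential subsequence $n_k=2^{2^k}$), but it is substantially more elaborate than necessary. It is worth recognizing that Corollary~\ref{subseq} already makes $i_m$ the running argmax of $M_j$, so the crude bound \eqref{maxMlarge} --- which is a one-line Borel--Cantelli computation --- does the whole job.
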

\begin{proof}
First, note that 
\[
 P\left( \max_{j\in (n_{k-1}, n_{k}]} M_j \leq d_k^{(1-\e)/s} \right) = \left( 1- P\left(M_1 > d_k^{(1-\e)/s}\right) \right)^{d_k} = o\left( e^{-d_k^{\e/2}} \right),
\]
where the last equality is due to \eqref{Mtail}. Therefore, the Borel-Cantelli Lemma gives that $P-a.s.$ we have
\begin{equation}
\max_{j\in (n_{k-1}, n_{k}]} M_j > d_k^{(1-\e)/s} \quad \text{ for all } k \text{ large enough.} \label{maxMlarge}
\end{equation}
Therefore, $P-a.s.$ we may assume that \eqref{maxMlarge} holds, the conclusion of Corollary \ref{explimit} holds, and that there exist subsequences $n_{k_m}=n_{k_m}(\w,\eta)$ and $i_m=i_m(\w,\eta)$ as specified in Corollary \ref{subseq}.
Then, by the choice of our subsequence $n_{k_m}$, only the crossing of the largest block (i.e. from $\nu_{i_m-1}$ to $\nu_{i_m}$) is relevant in the limiting distribution. Indeed,
\begin{align*}
&P_\w^{\nu_{\a_m}} \left( \left| \frac{ \left( \bar{T}_{\nu_{i_m-1}}^{(d_{k_m})} - E_\w ^{\nu_{\a_m}}\bar{T}_{\nu_{i_m-1}}^{(d_{k_m})} \right) + \left( \bar{T}_{x_m}^{(d_{k_m})} - \bar{T}_{\nu_{i_m}}^{(d_{k_m})}  - E_\w^{\nu_{i_m}} \bar{T}_{x_m}^{(d_{k_m})} \right) }{\sqrt{v_{k_m,\w}}} \right| \geq \e \right) \\
&\qquad \leq \frac{ Var_\w \left( \bar{T}_{x_m}^{(d_{k_m})} - \bar{T}_{\nu_{\a_m}}^{(d_{k_m})} \right) - \s_{i_m,d_{k_m},\w}^2 } { \e^2 v_{k_m,\w} }  \leq \frac{ \sum_{j\in (\a_m,\gamma_m]\backslash \{i_m\}} \s_{j,d_{k_m},\w}^2 }{ \e^2 M_{i_m}^2 } \leq \frac{1}{\e^2 m},
\end{align*}
where in the second to last inequality we used that $v_{k_m,\w} \geq \s_{i_m,d_{k_m},\w}^2 \geq M_{i_m}^2$, and the last inequality is due to our choice of the sequence $i_m$. 
Thus we have reduced the proof of the Theorem to showing that
\begin{equation}
\lim_{m\ra\infty} P_\w^{\nu_{i_m-1}} \left( \frac{\bar{T}_{\nu_{i_m}}^{(d_{k_m})} - \mu_{i_m,d_{k_m},\w} }{\sqrt{v_{k_m,\w}}} \leq x \right) = \Psi(x+1), \quad \forall x\in\R. 
\end{equation}
Now, since $i_m$ is chosen so that $M_{i_m} = \max_{j\in (n_{k_m-1}, n_{k_m}]} M_j$, we have that $M_{i_m} \geq d_{k_m}^{(1-\e)/s}$ for any $\e>0$ and all $m$ large enough. Then, the conclusion of Corollary \ref{explimit} gives that
\[
\lim_{m\ra\infty} P_\w^{\nu_{i_m-1}} \left( \frac{ \bar{T}^{(d_{k_m})}_{\nu_{i_m}} }{ \mu_{i_m,d_{k_m},\w} } \leq x \right) = \Psi(x).
\]
Thus, the proof will be complete if we can show
\begin{equation}
\lim_{m\ra\infty} \frac{\mu_{i_m,d_{k_m},\w}}{\sqrt{v_{k_m,\w}}} = 1. \label{vovermuim}
\end{equation}
However, by our choice of $n_{k_m}$ and $i_m$ we have 
\[
\s_{i_m,d_{k_m},\w}^2 \geq M_{i_m}^2 \geq  m \sum_{j\in (\a_m, \gamma_m] \backslash\{i_m\} } \s_{j,d_{k_m}\w}^2 = m\left( v_{k_m,\w} - \s_{i_m,d_{k_m},\w}^2 \right),
\]
which implies that
\begin{equation}
1\leq \frac{ v_{k_m,\w} }{ \s_{i_m,d_{k_m},\w}^2 } \leq \frac{m+1}{m} \underset{m\ra\infty}{\longrightarrow} 1. \label{vovers}
\end{equation}
Also, we can use Lemma \ref{VarET2compare} to show that for $k$ large enough and $\e>0$ 
\begin{align*}
&P\left( \exists i\in(n_{k-1},n_k]: \left| \frac{\s_{i,d_k,\w}^2}{\mu_{i,d_k,\w}^2} - 1 \right| \geq d_k^{-\e/s},\quad M_i \geq d_k^{(1-\e)/s} \right) \\
&\quad \leq d_k Q\left( \left| \frac{ Var_\w \bar{T}^{(d_k)}_\nu }{ \left( E_\w \bar{T}^{(d_k)}_\nu \right)^2 } - 1 \right| \geq d_k^{-\e/s},\quad M_1 \geq d_k^{(1-\e)/s} \right) = o\left(d_k^{-1+4\e}\right).
\end{align*}
Then, for $\e<\frac{1}{4}$ the Borel-Cantelli Lemma gives that $P-a.s.$ there exists a $k_0=k_0(\w)$ such that for $k\geq k_0$ and $i\in(n_{k-1}, n_k]$ with $M_i\geq d_k^{(1-\e)/s}$ we have $\left| \frac{\s_{i,d_k,\w}^2}{\mu_{i,d_k,\w}^2} - 1 \right| < d_k^{-\e/s}$. In particular, since $M_{i_m} \geq d_{k_m}^{(1-\e)/s}$ for all $m$ large enough, we have that
\begin{equation}
\lim_{m\ra\infty} \frac{ \s_{i_m,d_{k_m},\w}^2 }{ \mu_{i_m,d_{k_m},\w}^2 } = 1. \label{soverm}
\end{equation}
Since \eqref{vovers} and \eqref{soverm} imply \eqref{vovermuim}, the proof is complete. 
\end{proof}
\begin{proof}[\textbf{Proof of Theorem \ref{qEXP}:}] \ \\
As in the proof of Theorem \ref{qCLT} this follows from Proposition \ref{generalprop}.
\end{proof}
\end{subsection}
\end{section}

\begin{section}{Stable Behavior of the Quenched Variance}\label{qvs}
Recall from Theorem \ref{VETtail} that $Q\left( Var_\w T_\nu > x \right) \sim K_\infty x^{-s/2}$. Since the sequence of random variables $\left\{ Var_\w (T_{\nu_i} - T_{\nu_{i-1}}) \right\}_{i\in \N}$ is stationary under $Q$ (and weakly dependent) it is somewhat natural to expect that $n^{-2/s} Var_\w T_{\nu_n}$ converges in distribution (under $Q$) to a stable law of index $\frac{s}{2}<1$.  

\begin{proof}[\textbf{Proof of Theorem \ref{Varstable}:}] \ \\
Obviously it is enough to prove that the second equality in \eqref{stableET2} holds and that
\begin{equation}
\lim_{n\ra\infty} Q\left( \left| Var_\w T_{\nu_n} - \sum_{i=1}^n (E_\w^{\nu_{i-1}} T_{\nu_i})^2 \right| > \d n^{2/s} \right) = 0, \quad \forall \d>0. \label{VET2diff} 
\end{equation}
However, \eqref{VET2diff} is the statement of \cite[Corollary 5.6]{pzSL1} with $m=\infty$.
Thus it is enough to prove the second equality in \eqref{stableET2}. To this end, first note that 
\begin{align}
\frac{1}{n^{2/s}}\sum_{i=1}^n \left( E_\w^{\nu_{i-1}} T_{\nu_i} \right)^2 &= 
\frac{1}{n^{2/s}} \sum_{i=1}^n \left( \left( E_\w^{\nu_{i-1}} T_{\nu_i} \right)^2 - \left( E_\w^{\nu_{i-1}} \bar{T}^{(n)}_{\nu_i} \right)^2 \right) \label{switchET2} \\
&\qquad + \frac{1}{n^{2/s}} \sum_{i=1}^n \left( E_\w^{\nu_{i-1}} \bar{T}^{(n)}_{\nu_i} \right)^2 \mathbf{1}_{M_i \leq n^{(1-\e)/s}}  \label{smallET2} \\
&\qquad + \frac{1}{n^{2/s}} \sum_{i=1}^n \left( E_\w^{\nu_{i-1}} \bar{T}^{(n)}_{\nu_i} \right)^2 \mathbf{1}_{M_i > n^{(1-\e)/s}}. \label{bigET2}
\end{align}
Therefore, it is enough to show that \eqref{switchET2} and \eqref{smallET2} converge to $0$ in distribution (under $Q$) and that
\begin{equation}
\lim_{n\ra\infty} Q\left( \frac{1}{n^{2/s}} \sum_{i=1}^n \left( E_\w^{\nu_{i-1}} \bar{T}^{(n)}_{\nu_i} \right)^2 \mathbf{1}_{M_i > n^{(1-\e)/s}} \leq x \right) = L_{\frac{s}{2},b}(x) \label{bigET2stable}
\end{equation}
for some $b>0$. To prove that \eqref{switchET2} converges to $0$ in distribution, first note that factoring gives
\[
 \left( E_\w^{\nu_{i-1}} T_{\nu_i} \right)^2 - \left( E_\w^{\nu_{i-1}} \bar{T}^{(n)}_{\nu_i} \right)^2  \leq 2 E_\w^{\nu_{i-1}} T_{\nu_i} \left(  E_\w^{\nu_{i-1}} T_{\nu_i} - E_\w^{\nu_{i-1}} \bar{T}^{(n)}_{\nu_i} \right).
\]
Therefore, for any $\d>0$
\begin{align}
&Q\left( \sum_{i=1}^n \left( \left( E_\w^{\nu_{i-1}} T_{\nu_i} \right)^2 - \left( E_\w^{\nu_{i-1}} \bar{T}^{(n)}_{\nu_i} \right)^2 \right) > \d n^{2/s} \right) \nonumber \\
&\qquad \leq  Q\left( \sum_{i=1}^n 2 E_\w^{\nu_{i-1}} T_{\nu_i} \left(  E_\w^{\nu_{i-1}} T_{\nu_i} - E_\w^{\nu_{i-1}} \bar{T}^{(n)}_{\nu_i}  \right) > \d n^{2/s} \right) \nonumber \\
&\qquad \leq n Q\left(  E_\w T_{\nu} - E_\w \bar{T}^{(n)}_{\nu}  > 1 \right) + Q\left( 2 E_\w T_{\nu_n} > \d n^{2/s} \right). \label{switchET2b}
\end{align}
Then, \cite[Lemma 3.2 \& Theorem 1.1]{pzSL1} give that both terms in \eqref{switchET2b} tend to zero as $n\ra\infty$. The proof that \eqref{smallET2} converges in distribution to $0$ is essentially a counting argument. Since the $M_i$ are all independent and from \eqref{Mtail} we know the asymptotics of $Q(M_i > x)$, we can get good bounds on the number of $i\leq n$ with $M_i\in(n^\a, n^\b]$. Then, since by \cite[(15)]{pzSL1} we have $Q\left( E_\w^{\nu_{i-1}} \bar{T}^{(n)}_{\nu_i} \geq n^\b, M_i\leq n^\a\right) = o\left( e^{-n^{(\b-\a)/5}} \right)$ we can also get good bounds on the number of $i\leq n$ with $E_\w^{\nu_{i-1}} \bar{T}^{(n)}_{\nu_i} \in(n^\a, n^\b]$. The details of this argument are essentially the same as the proof of Lemma 5.5 in \cite{pzSL1} and will thus be ommitted. 
Finally, we will use \cite[Theorem 5.1(III)]{kGPD} to prove \eqref{bigET2stable}. 
Now, Theorem \ref{VETtail} gives that $Q\left( \left( E_\w T_{\nu} \right)^2 \mathbf{1}_{M_1 > n^{(1-\e)/s}} > x n^{2/s} \right) \sim K_\infty x^{-s/2} n^{-1}$, and \cite[Lemma 3.4]{pzSL1} gives bounds on the mixing of the array $\left\{ \left( E_\w^{\nu_{i-1}} T_{\nu_i} \right)^2 \mathbf{1}_{M_i > n^{(1-\e)/s}} \right\}_{i\in\Z, n\in\N}$.
This is enough to verify the first two conditions of \cite[Theorem 5.1(III)]{kGPD}. The final condition that needs to be verified is
\begin{equation}
\lim_{\d\ra 0}\limsup_{n\ra\infty} n E_Q \left[
n^{-2/s} (E_\w \bar{T}_{\nu}^{(n)})^2 \mathbf{1}_{M_1>n^{(1-\e)/s} }
\mathbf{1}_{n^{-1/s} E_\w \bar{T}_{\nu}^{(n)}  \leq \d} \right] = 0 \,.
\label{truncexp}
\end{equation}
By Theorem \ref{VETtail} we have that 
there exists a constant $C_4>0$ such that
for any $x > 0$,
\[
Q\left( E_\w \bar{T}_{\nu}^{(n)} > x
n^{1/s} , M_1
> n^{(1-\e)/s} \right) \leq Q\left( E_\w T_\nu > x
n^{1/s} \right) \leq C_4 x^{-s}\frac{1}{n}.
\]
Then using this we have
\begin{align*}
& n E_Q \left[ n^{-2/s} \left( E_\w \bar{T}_{\nu}^{(n)} \right)^2 \mathbf{1}_{M_1>n^{(1-\e)/s} }
\mathbf{1}_{n^{-1/s}E_\w \bar{T}_{\nu}^{(n)}  \leq \d} \right] \\
&\quad = n \int_0^{\d^2} Q\left( \left(E_\w \bar{T}_{\nu}^{(n)}\right)^2 > x n^{2/s} ,
M_1 > n^{(1-\e)/s} \right)  dx \\
&\quad \leq C_4 \int_0^{\d^2} x^{-s/2} dx = \frac{C_4 \d^{2-s}}{1-s/2}\,,
\end{align*}
where the last
integral is finite since $s<2$. \eqref{truncexp} follows, and therefore by \cite[Theorem 5.1(III)]{kGPD} we have that \eqref{bigET2stable} holds.
\end{proof}
\end{section}

\textbf{Acknowledgments.} I would like to thank Olivier Zindy for his helpful comments regarding the analysis of the quenched Laplace transform of $\bar{T}^{(n)}_\nu$ in Section \ref{Laplace}.


\end{document}